\newtheorem{theorem}{Theorem}[section]
\newtheorem{proposition}[theorem]{Proposition}
\newtheorem{lemma}[theorem]{Lemma}
\newtheorem{corollary}[theorem]{Corollary}
\theoremstyle{remark}
\newtheorem{remark}[theorem]{Remark}
\newtheorem*{claim}{Claim}
\newtheorem{example}[theorem]{Example}
\theoremstyle{definition}
\newtheorem{definition}[theorem]{Definition}
\newtheorem*{question}{Basic Question}
\newcommand{\N}{\mathbb{N}}
\newcommand{\Z}{\mathbb{Z}}
\newcommand{\Q}{\mathbb{Q}}
\newcommand{\R}{\mathbb{R}}
\newcommand{\C}{\mathbb{C}}
\newcommand{\SL}{\mathrm{SL}}
\newcommand{\GL}{\mathrm{GL}}
\newcommand{\SO}{\mathrm{SO}}
\numberwithin{equation}{section}
\title[Expanding Translates of Curves]{Equidistribution of Expanding Translates of Curves and Diophantine Approximation on Matrices}
\author{Pengyu Yang}
\address{Department of Mathematics\\
	The Ohio State University\\
	Columbus, OH}
\email{yang.2214@osu.edu}
\date{September 21, 2018}
\keywords{Geometric invariant theory, homogeneous spaces, equidistribution, Ratner's theorem,  Dirichlet's theorem, Diophantine approximation}
\subjclass[2010]{22E40; 14L24; 11J83}
\begin{document}
	
	\begin{abstract}
		We study the general problem of equidistribution of expanding translates of an analytic curve by an algebraic diagonal flow on the homogeneous space $G/\Gamma$ of a semisimple algebraic group $G$. We define two families of algebraic subvarieties of the associated partial flag variety $G/P$, which give the obstructions to non-divergence and equidistribution. We apply this to prove that for Lebesgue almost every point on an analytic curve in the space of $m\times n$ real matrices whose image is not contained in any subvariety coming from these two families, the Dirichlet's theorem on simultaneous Diophantine approximation cannot be improved.\par 
		The proof combines geometric invariant theory, Ratner's theorem on measure rigidity for unipotent flows, and linearization technique.
	\end{abstract}\maketitle
	\setcounter{tocdepth}{1}
	\tableofcontents
	
	\section{Introduction}
	\subsection{Background}
	Many problems in number theory can be recast in the language of homogeneous dynamics. Let $G$ be a Lie group and $\Gamma$ be a lattice in $G$, i.e. a discrete subgroup of finite covolume. Take a sequence $\{g_i\}$ in $G$ and a probability measure $\mu$ on $G/\Gamma$ which is supported on a smooth submanifold of $G/\Gamma$. The following question was raised by Margulis in \cite{Mar02}:
	
	\begin{question}[Margulis]
		What is the distribution of $g_i\mu$ in $G/\Gamma$ when $g_i$ tends to infinity in $G$?
	\end{question}
	
	In 1993, Duke, Rudnick and Sarnak \cite{DRS93} studied the case where $\mu$ is a finite invariant measure supported on a symmetric subgroup orbit, and applied it to obtain asymptotic estimates for the number of integral points of bounded norm on affine symmetric varieties. At the same time, Eskin and McMullen \cite{EM93} gave a simpler proof using the mixing property of geodesic flows. It was later generalized by Eskin, Mozes and Shah \cite{EMS96} to the case where $\mu$ is a finite invariant measure supported on a reductive group orbit, and applied it to count integral matrices of bounded norm with a given characteristic polynomial. Later Gorodnik and Oh \cite{GO11} worked in the Adelic setting, and gave an asymptotic formula for the number rational points of bounded height on homogeneous varieties. \par 
	
	In another direction, the dynamical behavior of translates of a submanifold of expanding horospherical subgroups in $\SL_n(\R)/\SL_n(\Z)$ is closely related to metric Diophantine approximation. In 1998, Kleinbock and Margulis \cite{KM98} proved extremality of a non-degenerate submanifold in $\R^n$, and their proof was based on quantitative non-divergence of translates of the submanifold by semisimple elements. Their work was later extended from $\R^n$ to the space $M_{m\times n}(\R)$ of $m\times n$ real matrices (see e.g. \cite{KMW10}\cite{BKM15}\cite{ABRS18}).\par 
	
	While quantitative non-divergence results are useful in the study of extremality, equidistribution results can be applied to study the improvability of Dirichlet's theorem. In 2008, Kleinbock and Weiss \cite{KW08} first explored improvability in the language of homogeneous dynamics, based on earlier observations by Dani \cite{Dan84} as well as Kleinbock and Margulis \cite{KM98}. Later Shah \cite{Sha09Invention} obtained a strengthened result for analytic curves in $\R^n$ by showing equidistribution of expanding translates of curves in $\SL_{n+1}(\R)/\SL_{n+1}(\Z)$ by singular diagonal elements $a(t)=\mathrm{diag}(t^{n},t^{-1},\cdots, t^{-1})$. This work has also been generalized to $m\times n$ matrices in a recent preprint \cite{SY16} by Shah and Lei Yang, where they considered the case $G=\SL_{m+n}(\R)$ and $a(t)=\mathrm{diag}(t^n,\cdots,t^n,t^{-m},\cdots,t^{-m})$. We shall discuss this subject in more details in \Cref{subsect:Grassmannian_Dirichlet}. \par 
	
	It is also worth considering the case $G=\SO(n,1)$, as there are interesting applications to hyperbolic geometry. See Shah's works \cite{Sha09Duke1}\cite{Sha09Duke2} and later generalizations by Lei Yang \cite{Yan16Israel}\cite{Yan17}. We shall provide more details in \Cref{subsect:equidistribution}. \\
	
	Motivated by the previous works, we are interested in the following equidistribution problem, which was proposed by Shah in ICM 2010 \cite{Sha10PICM}. Let $G=\mathbf{G}(\R)$ be a semisimple connected real algebraic group of non-compact type, and let $L$ be a Lie group containing $G$. Let $\Lambda$ be a lattice in $L$. Let $\{a(t)\}_{t\in\R^{\times}}$ be a multiplicative one-parameter subgroup of $G$, i.e. we have a homomorphism of real algebraic group $a \colon \mathbb{G}_m \rightarrow \mathbf{G}$. Suppose we have a bounded piece of an analytic curve on $G$ given by $\phi\colon I=[a,b]\rightarrow G$, and we fix a point $x_0$ on $L/\Lambda$ such that $Gx_0$ is dense in $L/\Lambda$. Let $\lambda_{\phi}$ denote the measure on $L/\Lambda$ which is the parametric measure supported on the orbit $\phi(I)x_0$, that is, $\lambda_{\phi}$ is the pushforward of the Lebesgue measure. When does $a(t)\lambda_{\phi}$ converge to the Haar measure on $L/\Lambda$ with respect to the weak-* topology, as $t$ tends to infinity? \par
	
	In \cite{Sha10PICM}, Shah found natural algebraic obstructions to equidistribution, and asked if those are the only obstructions. In this article, we give an affirmative answer to Shah's question. This generalizes previous results on $G=\SO(n,1)$ \cite{Sha09Duke1}\cite{Yan16Israel}, $G=\SO(n,1)^k$ \cite{Yan17}, as well as $G=\SL_n(\R)$ and $a(t)$ being singular \cite{Sha09Invention}\cite{SY16}. We also apply the equidistribution result to show that for almost every point on a ``non-degenerate'' analytic curve in the space of $m\times n$ real matrices, the Dirichlet's theorem cannot be improved. This sharpens a result of Shah and Yang \cite{SY16}. \par 
	
	We remark that our method also applies to analytic submanifolds. For convenience, we restrict our discussions to curves.
	
	\subsection{Non-escape of mass to infinity}\label{subsect:non_escape_of_mass}
	Let $G=\mathbf{G}(\R)$ be a semisimple connected real algebraic group of non-compact type, and let $L$ be a Lie group containing $G$. Let $\Lambda$ be a lattice in $L$. Let $\{a(t)\}_{t\in\R^{\times}}$ be a multiplicative one-parameter subgroup of $G$ with non-trivial projection on each simple factor of $G$. There is a parabolic subgroup $P=P(a)$ of $G$ associated with $a(t)$: 
	\begin{equation}
	P := \{ g \in G \colon \lim_{t\to\infty} a(t)ga(t)^{-1} \text{ exists in $G$}\}.
	\end{equation}
	Suppose we have a bounded piece of an analytic curve on $G$ given by $\phi:I=[a,b]\rightarrow G$, and we fix a point $x_0$ on $L/\Lambda$ such that the orbit $Gx_0$ is dense in $L/\Lambda$. Let $\lambda_{\phi}$ denote the parametric measure on $L/\Lambda$. If we expect the translated measures to get equidistributed, it is necessary that there is no escape of mass to infinity. \par 
	Let us first consider the special case $G=L=\SL_{m+n}(\R)$, $\Lambda=\SL_{m+n}(\Z)$ and $a(t)=\mathrm{diag}(t^n,\cdots,t^n,t^{-m},\cdots,t^{-m})$. In \cite{ABRS18}, Aka, Breuillard, Rosenzweig and de Saxc\'e defined a family of algebraic sets called \emph{constraining pencils} (see \cite[Definition 1.1]{ABRS18}), and used it to describe the obstruction to quantitative non-divergence. They remarked that constraining pencils give rise to certain Schubert varieties in Grassmannians.\par 
	Inspired by their work, we define the notion of \emph{unstable Schubert varieties}\footnote{The name comes from the notion of stability in geometric invariant theory, and should not be confused with unstable manifolds for a diffeomorphism.} (see \Cref{def:Schubert_variety}) with respect to $a(t)$ for general partial flag variety $G/P$, which naturally generalizes the notion of constraining pencils. This enables us to describe obstructions to non-divergence in general case. \par 
	Now we project our curve $\phi$ onto $G/P$. Consider 
	\begin{align}\label{eq:phi_tilde}
	\widetilde{\phi}\colon&[a,b]\longrightarrow G/P \nonumber \\
	&s \longmapsto \phi(s)^{-1}P. 
	\end{align}
	We are taking inverse here simply because we would like to quotient $P$ on the right, which is the case in most literatures.\par 
	We are ready to state our first main theorem on non-escape of mass.
	
	\begin{theorem}[Non-escape of mass]\label{thm:non_divergence_introduction}	
		Let $\phi:I=[a,b]\rightarrow G$ be an analytic curve such that the image of $\widetilde{\phi}$ is not contained in any unstable Schubert variety of $G/P$ with respect to $a(t)$. Then for any $\epsilon > 0$, there exists a compact subset $K$ of $L/\Lambda$ such that for any $t > 1$, we have
		\begin{equation}
		\frac{1}{b-a}\left\vert\left\{ s\in[a,b]\colon a(t)\phi(s)x_0\in K \right\} \right\vert > 1 - \epsilon.
		\end{equation}
	\end{theorem}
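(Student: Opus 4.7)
The plan is to deduce \Cref{thm:non_divergence_introduction} from a Kleinbock--Margulis-style quantitative non-divergence criterion on $L/\Lambda$, with the unstable Schubert hypothesis playing the role of an analytic non-vanishing condition on the leading coefficients of the expanded curve.

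As a first step I would invoke the classical reduction of recurrence on $L/\Lambda$ to norm estimates in representations of $L$: there exist finitely many representations $(V_i,\rho_i)$ and countable families $\mathcal{D}_i\subset V_i$ of distinguished $\Lambda$-translates of lattice vectors such that an orbit leaves every compact set precisely when some $\|g\cdot v\|\to 0$ for $v\in\bigsqcup_i\mathcal{D}_i$. It thus suffices to show that for every $\epsilon>0$ there is $\delta>0$ such that for all $t>1$,
\[
\bigl|\{s\in[a,b]\colon \exists\, i,\ \exists\, v\in\mathcal{D}_i,\ \|a(t)\phi(s)\rho_i(v)\|<\delta\}\bigr|<\epsilon(b-a).
\]
I would then verify the two hypotheses of the Kleinbock--Margulis theorem for the family $f_v(s):=\|a(t)\phi(s)\rho_i(v)\|$. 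The $(C,\alpha)$-goodness on $[a,b]$ follows from the analyticity of $\phi$: after decomposing each $V_i$ into $a(t)$-weight subspaces, the coordinates of $f_v$ become bounded combinations, with $t$-dependent weights, of a fixed finite list of analytic functions of $s$, and analytic functions on a compact interval are uniformly $(C,\alpha)$-good.

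The decisive step, which I expect to be the principal obstacle, is the uniform lower bound on $\sup_{s\in[a,b]}f_v(s)$. Expanding $a(t)\phi(s)v=\sum_{\lambda}t^{\lambda}w_\lambda(s)$ with analytic maps $w_\lambda:[a,b]\to V_i$ valued in $a(t)$-weight spaces, the supremum is controlled below by $t^{\lambda_{\max}(v)}\cdot\sup_s\|w_{\lambda_{\max}(v)}(s)\|$, where $\lambda_{\max}(v)$ is the largest weight with $w_\lambda\not\equiv 0$ on $[a,b]$. Via a Hilbert--Mumford / Borel--Weil analysis in each fundamental projective embedding $G/P\hookrightarrow\mathbb{P}(V_\omega)$, I would argue that the hypothesis that $\widetilde{\phi}([a,b])$ avoids every unstable Schubert variety of $G/P$ with respect to $a(t)$ is exactly what guarantees, for each vector $v$ arising in the orbits $\mathcal{D}_i$, that the top-weight coefficient $w_{\lambda_{\max}(v)}$ does not vanish identically on $[a,b]$. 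Since a nonzero real analytic function has only finitely many zeros on a compact interval, this yields the required uniform supremum bound.

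With these ingredients in place, the Kleinbock--Margulis quantitative non-divergence theorem delivers the measure estimate displayed above, and taking $K$ to be the preimage in $L/\Lambda$ of a sufficiently large sublevel set for the relevant norms produces the compact set asserted in the theorem.
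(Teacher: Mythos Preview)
Your architecture---reduce to the Kleinbock--Margulis criterion, verify $(C,\alpha)$-goodness from analyticity, and then establish the uniform lower bound $\sup_{s\in I}\lVert a(t)\phi(s)v\rVert\ge C\lVert v\rVert$---is exactly what the paper does (with the single representation $V=\bigoplus_i\bigwedge^{i}\mathfrak l$), and the last inequality is isolated there as \Cref{thm:linear_stability}. The gap is that you hand-wave at precisely the step the paper identifies as its main technical contribution.

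Your phrasing of what the Schubert hypothesis buys is tautological: by your own definition of $\lambda_{\max}(v)$, the coefficient $w_{\lambda_{\max}(v)}$ never vanishes identically. What you actually need is $\lambda_{\max}(v)\ge0$ for every nonzero $v$, equivalently that $\phi(I)\not\subset G(v,V^{-}(a)):=\{g\in G:gv\in V^{-}(a)\}$. Once that is known, uniformity in $v$ is cheap, since $v\mapsto\sup_{s}\lVert\pi^{\ge0}(\phi(s)v)\rVert$ is continuous and positive on the unit sphere; so the entire content lies in the pointwise implication. The paper proves it as \Cref{prop:basic_lemma}: for any unstable $v$ there exist $\delta_0\in\Gamma^{+}(T)$ and $g_0\in G$ with $G(v,V^{-}(a))\subset\bigsqcup_{w\in W^{+}(\delta_0,a)}Pw^{-1}Bg_0^{-1}$, whose image under $g\mapsto g^{-1}P$ is a finite union of unstable Schubert varieties. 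The proof is \emph{not} a Borel--Weil computation in fundamental embeddings of $G/P$---the vector $v$ is an arbitrary element of a typically reducible $G$-module, not a highest-weight line---but rather Kempf's theorem that an unstable vector admits an essentially unique optimal destabilising one-parameter subgroup $\delta_0$, followed by a perturbation $\delta_N=N\delta_0+a^{w}$ and a derivative calculation showing that any Bruhat cell with $(\delta_0,a^{w})\le0$ would contradict optimality of $\delta_0$. Your Hilbert--Mumford instinct points the right way, but the mechanism you name does not handle an arbitrary $v$; Kempf's refinement is what makes the argument go through.
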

	To prove \Cref{thm:non_divergence_introduction}, we consider a certain finite dimensional representation $V$ of $G$ (see \Cref{def:V}), and show that the corresponding curve in $V$ cannot be uniformly contracted to the origin. The key ingredient is the following theorem, which is the main technical contribution of this article.
	
	\begin{theorem}[Linear stability]\label{thm:linear_stability}
		Let $\rho\colon G\rightarrow \GL(V)$ be any finite-dimensional linear representation of $G$, with a norm $\lVert\cdot\rVert$ on $V$. Suppose that the image of $\widetilde{\phi}$ is not contained in any unstable Schubert variety of $G/P$ with respect to $a(t)$. Then there exists a constant $C>0$ such that for any $t>1$ and any $v\in V$, one has
		\begin{equation}\label{eq:linear_stablility}
		\sup_{s\in [a,b]}\lVert a(t)\phi(s)v \rVert \geq C\lVert v \rVert.
		\end{equation}
	\end{theorem}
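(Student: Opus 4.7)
\emph{Plan.} I would argue by contradiction, combining a weight-space estimate on $V$ with a GIT-style identification of the resulting linear obstruction as an unstable Schubert variety on $G/P$.

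Suppose no such $C>0$ exists; then there are $t_n>1$ and unit vectors $v_n\in V$ with $\sup_{s\in[a,b]}\lVert a(t_n)\phi(s)v_n\rVert\to 0$. If $(t_n)$ has a bounded subsequence, passing to $t_n\to t^*\in[1,\infty)$ and $v_n\to v^*$ with $\lVert v^*\rVert=1$ makes $a(t^*)\phi(\cdot)v^*$ vanish identically, forcing $v^*=0$, a contradiction. So assume $t_n\to\infty$. Fix a norm on $V$ adapted to the weight decomposition $V=\bigoplus_\chi V_\chi$ under $a$, so that for $t>1$,
\[
\lVert a(t)w\rVert^2 \;=\; \sum_\chi t^{2\chi}\lVert w_\chi\rVert^2 \;\geq\; \sum_{\chi\geq 0}\lVert w_\chi\rVert^2.
\]
Hence $\sup_s\lVert(\phi(s)v_n)_\chi\rVert\to 0$ for every $\chi\geq 0$. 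Extracting a subsequential unit-norm limit $v_n\to v^*$ and invoking uniform continuity of $\phi$ on $[a,b]$ to pass to the limit, one obtains
\[
\phi(s)v^*\in V_-\qquad\text{for all }s\in[a,b].
\]

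The heart of the proof is to convert this linear obstruction into a Schubert variety containing $\widetilde{\phi}([a,b])$. First I would project $v^*$ onto a nonzero $G$-irreducible component of $V$---a projection that commutes with $a$ and hence preserves $V_-$---reducing to the case that $V$ is irreducible. By analyticity of $\phi$, the Zariski-closed subset $\{g\in G:gv^*\in V_-\}$ of $G$ contains $\phi([a,b])$. Since $V_-$ is stabilized by the opposite parabolic $P^-$, passing through the map $g\mapsto g^{-1}P$ and using the Borel--Weil/Plücker description of $G/P$ inside some $\mathbb{P}(V_\lambda)$ rewrites this condition as the vanishing of certain nonnegative-$a$-weight matrix coefficients, which is precisely the defining condition of an unstable Schubert variety per \Cref{def:Schubert_variety}. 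Thus $\widetilde{\phi}([a,b])$ lies in an unstable Schubert variety, contradicting the hypothesis.

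The main obstacle is this last identification. Since $V_-$ is only $P^-$-stable and not $P$-stable, the naive set $\{gP:g^{-1}v^*\in V_-\}$ is not well-defined on $G/P$; one has to take the appropriate $P$-saturation and verify that its description matches the combinatorial/geometric definition of an unstable Schubert variety. The GIT content of the paper, specifically the structure provided by \Cref{def:Schubert_variety} together with the choice of test representation in \Cref{def:V}, is exactly what is needed for this bookkeeping, and is where most of the real work of the proof will live.
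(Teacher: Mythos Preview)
Your overall architecture matches the paper: argue by contradiction, extract a unit-norm limit $v^*$ with $\phi(s)v^*\in V^-(a)$ for all $s\in[a,b]$, and then show that the set $G(v^*,V^-(a))=\{g\in G:gv^*\in V^-(a)\}$ projects under $g\mapsto g^{-1}P$ into a finite union of unstable Schubert varieties, so that by analyticity $\widetilde{\phi}([a,b])$ lands in one of them. The reduction step is fine; the gap is in the last identification, which you correctly flag as the heart of the matter but for which your proposed mechanism does not work.

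Your plan invokes the Borel--Weil/Pl\"ucker embedding of $G/P$ into some $\mathbb{P}(V_\lambda)$ and speaks of the vanishing of nonnegative-$a$-weight matrix coefficients. This conflates two unrelated representations: the arbitrary $V$ in which $v^*$ lives, and the highest-weight module $V_\lambda$ used to embed $G/P$. Reducing $V$ to an irreducible summand does not bridge this; even for irreducible $V$, the condition $gv^*\in V^-$ depends on the particular vector $v^*$ and does not translate into the vanishing of Pl\"ucker-type coordinates on $G/P$. Your reference to \Cref{def:V} is also misplaced: that is the specific representation used later for non-divergence, not an input to linear stability.

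What the paper actually uses is Kempf's theorem on optimal destabilizing one-parameter subgroups (\cite[Theorem~4.2]{Kem78}), and this is the missing idea. Since $0\in\overline{Gv^*}$, Kempf produces a canonical (up to $P(\delta_0)$-conjugacy) dominant one-parameter subgroup $\delta_0\in\Gamma^+(T)$ minimizing $m(v^*,\delta)/\lVert\delta\rVert$. One then writes $g\in G(v^*,V^-(a))$ in Bruhat form $g=pw^{-1}b$ and argues: if $(\delta_0,a^w)\le 0$, then the perturbation $\delta_N=N\delta_0+a^w$ has $m(v',\delta_N)/\lVert\delta_N\rVert<m(v',\delta_0)/\lVert\delta_0\rVert$ for $N$ large (a derivative computation), contradicting Kempf's optimality. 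Hence $w\in W^+(\delta_0,a)$, and $G(v^*,V^-(a))\subset\bigsqcup_{w\in W^+(\delta_0,a)}Pw^{-1}Bg_0^{-1}$, which is exactly the content of \Cref{prop:basic_lemma}. This optimality/perturbation argument is what pins down the obstruction as an unstable Schubert variety; nothing in the Borel--Weil picture supplies it.
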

	
	\Cref{thm:linear_stability} is of independent interest, as it is also applicable to obtain quantitative non-divergence results (see e.g. \cite{Shi15}). Compared to the previous works on special cases of the theorem, the novel part of our proof is that we use a result in geometric invariant theory, which is Kempf's numerical criterion \cite[Theorem 4.2]{Kem78}. \par 
	Geometric invariant theory was first developed by Mumford to construct quotient varieties in algebraic geometry; its connections to dynamics have been found in recent years. Kapovich, Leeb and Porti \cite[Section 7.4]{KLP18} explored the relation with geometric invariant theory for groups of type $A_1^n$. In a recent preprint \cite{Kha15}, Khayutin utilized geometric invariant theory to study the double quotient of a reductive group by a torus. In \cite[Section 6]{RS16}, Richard and Shah applied \cite[Lemma 1.1(b)]{Kem78} to deal with focusing, which also came from the study of geometric invariant theory. \par 
	\Cref{thm:linear_stability} is proved in \Cref{sect:linear_stability}, and \Cref{thm:non_divergence_introduction} is proved in \Cref{sect:nondivergence}.

	\subsection{Equidistribution of translated measures}\label{subsect:equidistribution}
	Let the notations be as in \Cref{subsect:non_escape_of_mass}, and suppose that the image of $\widetilde{\phi}\colon s\mapsto\phi(s)^{-1}P$ is not contained in any unstable Schubert variety of $G/P$ with respect to $a(t)$ (see \Cref{def:Schubert_variety}). Due to \Cref{thm:non_divergence_introduction}, for any sequence $t_i \to\infty$, the sequence of translated measures $a(t)\lambda_{\phi}$ is tight, i.e. any weak-* limit is a probability measure on $L/\Lambda$. If one can further show that any limit measure is the Haar measure on $L/\Lambda$, then the translated measure $a(t)\lambda_{\phi}$ gets equidistributed as $t\to\infty$. In order to achieve this, one needs to exclude a larger family of obstructions. \par 
	In a sequence of papers \cite{Sha09Duke1}\cite{Sha09Duke2}\cite{Sha09Invention}, Shah initiated the study of the curve equidistribution problem with several important special cases. For example, when $G=\SL_{n+1}(\R)$ and $a(t)=\mathrm{diag}(t^{n},t^{-1},\cdots,t^{-1})$, the obstructions to equidistribution come from linear subspaces of $\R\mathbb{P}^n$, which are exactly the unstable Schubert varieties with respect to $a(t)$.\par 
	Another interesting case is when $G=\SO(n,1)$ and $\{a(t)\}$ being the geodesic flow on the unit tangent bundle $T^1(\mathbb{H}^n)$ of the hyperbolic space $\mathbb{H}^n\cong \SO(n,1)/\SO(n)$. The visual boundary of $\mathbb{H}^n$ has the identification
	\begin{equation}
	\partial\mathbb{H}^n\cong\mathbb{S}^{n-1}\cong G/P.
	\end{equation}
	Shah found that the obstructions to equidistribution comes from proper subspheres $\mathbb{S}^{m-1}$ of $\mathbb{S}^{n-1}$ ($m<n$). However, since the real rank of $G$ is one, the proper Schubert varieties of $G/P$ are just single points. Therefore, these obstructions are not given by Schubert varieties. Nonetheless, the subspheres are still natural geometric objects, as they are closed orbits of the subgroups $\SO(m,1)\subset\SO(n,1)$, which correspond to totally geodesic submanifolds $\mathbb{H}^m\subset\mathbb{H}^n$. \par 
	Motivated by these results, Shah \cite{Sha10PICM} found the following algebraic obstruction to equidistribution in the general setting. Suppose that $F$ is a proper subgroup of $L$ containing $\{a(t)\}$, and $g\in G$ is an element such that the orbit $Fgx_0$ is closed and carries a finite $F$-invariant measure. Suppose that $\phi(I)\subset P(F\cap G)g$. Then for any sequence $t_i\to\infty$, it follows that any weak-* limit of probability measures $a(t_i)\lambda_{\phi}$ is a direct integral of measures which are supported on closed sets of the form $bFgx_0$, where $b\in P$. Such limiting measures are concentrated on strictly lower dimensional submanifolds of $L/\Lambda$. Shah also asked if these are the only obstructions. \par 
	We now state our main theorem on equidistribution, which answers Shah's question affirmatively. Recall that $x_0$ is an element in $L/\Lambda$ such that $Gx_0$ is dense in $L/\Lambda$. Let $\widetilde{\phi}$ be as in \eqref{eq:phi_tilde}. For the definition of \emph{unstable Schubert variety}, see \Cref{def:Schubert_variety}.
	
	\begin{theorem}\label{thm:equidistribution0}
		Let $\phi\colon I=[a,b]\rightarrow G$ be an analytic curve such that the following two conditions hold:
		\begin{enumerate}[(a)]
			\item The image of $\widetilde{\phi}$ is not contained in any unstable Schubert variety of $G/P$ with respect to $a(t)$;
			\item For any $g\in G$ and any proper algebraic subgroup $F$ of $L$ containing $\{a(t)\}$ such that $Fgx_0$ is closed and admits a finite $F$-invariant measure, the image of $\phi$ is not contained in $P(F\cap G)g$.
		\end{enumerate}
		Then for any $f\in C_c(L/\Lambda)$, we have
		\begin{equation}\label{eq:equidistribution0}
		\lim_{t\to\infty}\frac{1}{b-a}\int_{a}^{b}f(a(t)\phi(s)x_0)\,\mathrm{d}s = \int_{L/\Lambda}f\,\mathrm{d}\mu_{L/\Lambda},
		\end{equation}
		where $\mu_{L/\Lambda}$ is the $L$-invariant probability measure on $L/\Lambda$.
	\end{theorem}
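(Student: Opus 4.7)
The plan is to follow the standard Ratner-theoretic equidistribution scheme, divided into four steps: (1) extract weak-$*$ limits using the non-escape of mass from \Cref{thm:non_divergence_introduction}; (2) prove any such limit is invariant under a nontrivial unipotent subgroup; (3) invoke Ratner's measure classification theorem together with the Mozes--Shah and Dani--Margulis linearization machinery to express the limit as an integral of algebraic measures on closed homogeneous subspaces; (4) rule out contributions from proper intermediate subgroups via hypothesis (b), using the geometric-invariant-theoretic framework established in the proof of \Cref{thm:linear_stability}.

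Step (1) is immediate: by \Cref{thm:non_divergence_introduction} the family $\{a(t)\lambda_{\phi}\}_{t>1}$ is tight, so given any sequence $t_i\to\infty$ we pass to a subsequence with $a(t_i)\lambda_{\phi}\to\mu$ weakly for some probability measure $\mu$ on $L/\Lambda$; it then suffices to prove $\mu=\mu_{L/\Lambda}$. For step (2), let $W\subset G$ be the expanding horospherical subgroup opposite to $P$ with respect to $a(t)$, and on the Bruhat big cell factor $\phi(s_0+r)\phi(s_0)^{-1}=w(r)p(r)$ with $w(r)\in W$, $p(r)\in P$, so that $a(t)\phi(s_0+r)x_0=\bigl(a(t)w(r)a(t)^{-1}\bigr)\cdot a(t)p(r)\phi(s_0)x_0$. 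Reparametrizing $r$ by a suitable $t$-dependent scale and using that, by hypothesis (a), the curve $\widetilde{\phi}$ has nonzero component along every simple-root direction of $W$ at almost every $s_0$ (otherwise its image would lie in a proper unstable Schubert variety), one extracts in the limit a nontrivial unipotent subgroup of $W$ under which $\mu$ is invariant.

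For step (3), Ratner's theorem combined with the Mozes--Shah theorem produces a countable family $\mathcal{H}$ of closed connected proper subgroups $H\subset L$, each containing $\{a(t)\}$ and acted on ergodically by a one-parameter unipotent subgroup, such that $\mu$ is a countable convex combination of algebraic measures supported on closed orbits $Hgx_0$ with $H\in\mathcal{H}$, together possibly with a Haar component. Step (4), the crux, treats each proper $H\in\mathcal{H}$ via the linearization technique: one constructs a finite-dimensional representation $V_H$ of $L$ and an $L$-equivariant algebraic map whose fibres encode closed $H$-orbits, and observes that a positive contribution of $H$ to $\mu$ forces the translated curve $s\mapsto a(t)\phi(s)x_0$ to spend definite time near preimages of a proper subvariety $A_H\subset V_H$. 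Feeding this into the Kempf numerical criterion and the unstable Schubert variety machinery of \Cref{thm:linear_stability} applied to $V_H$, one extracts a proper algebraic subgroup $F=\mathrm{Stab}_L(v)^{\circ}$ containing $\{a(t)\}$ and an element $g\in G$ with $F g x_0$ closed of finite volume, such that $\phi(I)\subset P(F\cap G)g$, contradicting hypothesis (b). Hence every proper $H$ contributes zero and $\mu=\mu_{L/\Lambda}$.

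I expect step (4) to be the principal obstacle. Bridging the abstract Ratner decomposition, which produces the subgroups $H\subset L$ only implicitly through measure-theoretic invariance, with the concrete GIT obstructions of \Cref{thm:linear_stability}, which live on the partial flag variety $G/P$, will require identifying the right representation $V_H$ for each $H$, tracking how closed $H$-orbits linearize as $L$-orbits of a distinguished vector, and verifying that the resulting unstable locus in $V_H$ matches precisely the sets of the form $P(F\cap G)g$ appearing in hypothesis (b). The argument must also be carried out uniformly across the countable family $\mathcal{H}$ so as to avoid introducing a null set of exceptional parameters $s\in I$, which is handled by Dani--Margulis's uniform non-divergence estimates combined with an induction on $\dim H$.
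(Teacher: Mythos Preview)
Your four-step outline matches the paper's strategy at a high level, but two of the steps contain genuine gaps that the paper handles differently.

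\textbf{Step (2): the twist.} Your argument that $\mu$ itself is invariant under a fixed unipotent subgroup of $W$ does not go through as stated. The tangent direction $Y_s$ extracted from $a(t)\phi(s+\xi)\phi(s)^{-1}a(t)^{-1}$ as $t\to\infty$ is a nilpotent element that genuinely depends on $s$; hypothesis~(a) does \emph{not} force it to have a nonzero component along every simple-root direction (consider a curve whose derivative at each point lies in a single root space). What the paper does instead is choose an analytic family $\delta(s)\in G$ with $\mathrm{Ad}(\delta(s))Y_s=w_0$ fixed, and prove that the \emph{twisted} measures
\[
\lambda_i(f)=\frac{1}{|I|}\int_I f\bigl(\delta(s)a(t_i)\phi(s)x_0\bigr)\,\mathrm{d}s
\]
have limits invariant under $W=\{\exp(tw_0)\}$. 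Only after proving $\lambda=\mu_{L/\Lambda}$ does one deduce the same for the untwisted limit $\mu$, by a separate argument (the analogue of \cite[Corollary~5.7]{Sha09Invention}). You should not expect to bypass this twist.

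\textbf{Step (3): what Ratner gives.} The subgroups $H\in\mathcal{H}$ produced by Ratner's theorem are those for which $W$-ergodic components are $gHg^{-1}$-invariant; they need not contain $\{a(t)\}$. The subgroup $F$ containing $\{a(t)\}$ with $Fgx_0$ closed of finite volume is constructed only later, as (a conjugate of) the stabiliser $N_L^1(H)$ of the vector $p_H\in\bigwedge^{\dim H}\mathfrak{l}$, after one has shown via linearization and the Kempf machinery that $\phi(s)l\gamma p_H\in V^{0-}(a)$ and that the limit under $a(t)$ stays on the $G$-orbit $G\cdot l\gamma p_H$.

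\textbf{Step (4): the missing induction.} Once hypothesis~(b) rules out proper $F$, you conclude $G$ fixes $l\gamma p_H$, hence $H\triangleleft L$ and $\lambda$ is $H$-invariant. But this does not yet give $\lambda=\mu_{L/\Lambda}$. The paper finishes by projecting to $L/H$, checking via \Cref{lem:functoriality} that both hypotheses~(a) and~(b) persist for the projected curve, and inducting on the number of simple factors of $L$. Your ``induction on $\dim H$'' does not capture this: you must re-run the entire argument in the quotient, not merely iterate within the original $L$.
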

	
	\begin{remark}
		In \Cref{thm:equidistribution0}, if we assume $(a)$ holds, then by the above discussion we know that \eqref{eq:equidistribution0} holds if and only if $(b)$ holds. In this sense, our result is sharp.
	\end{remark}
	
	One can even require $F\cap G$ to be reductive if we replace the family of unstable Schubert varieties with the slightly larger family of \emph{weakly unstable Schubert varieties} (see \Cref{def:Schubert_variety}).
	
	\begin{theorem}\label{thm:equidistribution1}
		Let $\phi\colon I=[a,b]\rightarrow G$ be an analytic curve such that the following two conditions hold:
		\begin{enumerate}[(A)]
			\item The image of $\widetilde{\phi}$ is not contained in any weakly unstable Schubert variety of $G/P$ with respect to $a(t)$;
			\item For any $g\in G$ and any proper algebraic subgroup $F$ of $L$ containing $\{a(t)\}$ such that $Fgx_0$ is closed and admits a finite $F$-invariant measure and that $F\cap G$ is reductive, the image of $\phi$ is not contained in $P(F\cap G)g$.
		\end{enumerate}
		Then for any $f\in C_c(L/\Lambda)$, we have
		\begin{equation}\label{eq:equidistribution1}
		\lim_{t\to\infty}\frac{1}{b-a}\int_{a}^{b}f(a(t)\phi(s)x_0)\,\mathrm{d}s = \int_{L/\Lambda}f\,\mathrm{d}\mu_{L/\Lambda},
		\end{equation}
		where $\mu_{L/\Lambda}$ is the $L$-invariant probability measure on $L/\Lambda$.
	\end{theorem}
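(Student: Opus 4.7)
My plan is to derive \Cref{thm:equidistribution1} from \Cref{thm:equidistribution0} by showing that hypotheses (A) and (B) together imply (a) and (b). The implication (A) $\Rightarrow$ (a) should be immediate: by design (\Cref{def:Schubert_variety}), every unstable Schubert variety is weakly unstable, so avoiding the larger family implies avoiding the smaller one. The substance is to show that (A) and (B) jointly imply (b).

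I would argue by contradiction. Suppose (b) fails, so there exist $g\in G$ and a proper algebraic subgroup $F\subset L$ containing $\{a(t)\}$ such that $Fgx_0$ is closed with finite $F$-invariant measure, and $\phi(I)\subset PHg$, where $H:=F\cap G$. If $H$ is reductive, this directly contradicts (B) and we are done. Otherwise the unipotent radical $U:=R_u(H^{\circ})$ is nontrivial, so by the Borel--Tits theorem there exists a proper parabolic subgroup $Q\subset G$ with $H\subset N_G(U)\subset Q$. A direct computation with $\widetilde\phi(s)=\phi(s)^{-1}P$ then yields
\begin{equation*}
\widetilde\phi(I) \;\subset\; g^{-1}HP/P \;\subset\; g^{-1}QP/P,
\end{equation*}
a proper closed subvariety of $G/P$ that decomposes into finitely many Schubert cells under the Bruhat decomposition of $Q\backslash G/P$.

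The main obstacle, and the true content of the theorem, is to show that $g^{-1}QP/P$ is contained in a weakly unstable Schubert variety with respect to $a(t)$, which would contradict (A) and finish the proof. I expect the definition of weakly unstable in \Cref{def:Schubert_variety} to be tailored precisely for this: using the Kempf--Rousseau / Hilbert--Mumford formalism already central to \Cref{thm:linear_stability}, one embeds $G/P$ Pl\"ucker-style into a projective representation of $G$ and verifies that the lift of any point of $g^{-1}QP/P$ is destabilized along $a(t)$ in the weak (non-strict) sense. The necessary cocharacter data is furnished by $Q$, but the passage through a non-reductive intermediate subgroup costs the strictness of the destabilization---which is exactly why one lands in a weakly unstable, rather than an unstable, Schubert variety. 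Carrying out this weak Kempf--Rousseau instability check for the subvariety coming from $Q$ is the technical crux I would need to work out carefully.
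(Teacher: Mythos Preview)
Your overall strategy---deducing \Cref{thm:equidistribution1} from \Cref{thm:equidistribution0} by proving (A)$+$(B)$\Rightarrow$(b)---is different from what the paper does. The paper does \emph{not} reduce \Cref{thm:equidistribution1} to \Cref{thm:equidistribution0}; it reruns the whole argument (\Cref{prop:twisted_limit_measure_Haar1}, \Cref{cor:limit_measure_Haar1}) with (A) in place of (a). The point is that the subgroup $F$ appearing in the proof is not arbitrary: it arises as the stabilizer in $L$ of a concrete vector $l\gamma p_H$ produced by the dynamics (\Cref{prop:algebraic_consequence}). Under (A), \Cref{prop:closed_orbit_reductive_stablizer} (which rests on \Cref{prop:basic_lemma2}) shows that the $G$-orbit of this vector is closed, so by Matsushima its $G$-stabilizer $F\cap G$ is reductive; then (B) forces $F=L$. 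So the paper never needs to analyze a general non-reductive $H=F\cap G$.

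Your route can be made to work, but the execution you sketch is off. The Pl\"ucker embedding of $G/P$ puts you in a representation where the stabilizer of the distinguished vector is $P$, not $H$, so it tells you nothing about $H$. The Borel--Tits passage to a parabolic $Q$ is a detour: the claim ``$g^{-1}QP/P$ lies in a weakly unstable Schubert variety'' does not follow just from $a(t)\in Q$, and the cocharacter defining $Q$ bears no a priori relation to $a$. What actually works is to apply Chevalley's theorem to $H$ itself: choose a representation $V$ and $v\in V$ with $\mathrm{Stab}_G(v)=H$. Since $H$ is non-reductive, $Gv$ is not closed (Matsushima), and since $a(t)\in H$ fixes $v$ one checks $PH\subset G(v,V^{0-}(a))$. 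Now \Cref{prop:basic_lemma2} and \Cref{lem:union_of_Schubert}(c) give that $\pi_P(PH)=HP/P$ lies in a finite union of weakly unstable Schubert varieties, hence so does $g^{-1}HP/P\supset\widetilde\phi(I)$, contradicting (A). Thus the real engine is \Cref{prop:basic_lemma2}, the same GIT input the paper uses, just invoked at a different moment.
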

	
	If a reductive subgroup $H$ contains $\{a(t)\}$, then $P_H=P\cap H$ is a parabolic subgroup of $H$ associated with $a(t)$, and $HP/P$ is homeomorphic to $H/P_H$. Hence we give the following definition.
	
	\begin{definition}[Partial flag subvariety]\label{def:partial_flag_subvariety}
		A \emph{partial flag subvariety of $G/P$ with respect to $a(t)$} is a subvariety of the form $gHP/P$, where $g$ is an element in $G$, and $H$ is a reductive subgroup of $G$ containing $\{a(t)\}$.
	\end{definition}
	
	In view of \Cref{def:partial_flag_subvariety}, \Cref{thm:equidistribution1} shows that the obstructions consist of two families of geometric objects: weakly unstable Schubert varieties and partial flag subvarieties.\par 

	\Cref{thm:equidistribution0} and \Cref{thm:equidistribution1} are proved in \Cref{sect:focusing}.
	
	\subsection{Grassmannians and Dirichlet's approximation theorem on matrices}\label{subsect:Grassmannian_Dirichlet}
	In this section, we give an application of our equidistribution result to simultaneous Diophantine approximation.\par 
	In 1842, Dirichlet proved a theorem on simultaneous approximation of a matrix of real numbers (DT): Given any two positive integers $m$ and $n$, a matrix $\Psi\in M_{m\times n}(\R)$, and $N>0$, there exist integral vectors $\mathbf{p}\in\Z^n\backslash\{\mathbf{0}\}$ and $\mathbf{q}\in\Z^m$ such that
	\begin{equation}
	\lVert\mathbf{p}\rVert\leq N^m \quad \text{and} \quad \lVert\Psi\mathbf{p}-\mathbf{q}\rVert \leq N^{-n},
	\end{equation}
	where $\lVert\cdot\rVert$ denotes the supremum norm, that is, $\lVert x\rVert=\max_{1\leq i\leq k}\lvert x_i\rvert$ for any $\mathbf{x}=(x_1,x_2,\cdots,x_k)\in\R^k.$
	
	Given $0<\mu<1$. After Davenport and Schmidt \cite{DS6970}, we say that $\Psi\in\mathrm{M}_{m\times n}(\R)$ is $\mathrm{DT}_\mu$-improvable if for all sufficiently large $N>0$, there exists nonzero integer vectors $\mathbf{p}\in\Z^n$ and $\mathbf{q}\in\Z^m$ such that
	\begin{equation}
	\lVert\mathbf{p}\rVert\leq \mu N^m \quad \text{and} \quad \lVert\Psi\mathbf{p}-\mathbf{q}\rVert \leq \mu N^{-n}.
	\end{equation}
	We say that $\Psi$ is not DT-improvable, if for any $0<\mu<1$, $\Psi$ is not $\mathrm{DT}_\mu$-improvable.\par 
	In \cite{DS6970}, it was proved that Dirichlet's theorem cannot be improved for Lebesgue almost every $m \times n$ real matrix. In \cite{DS70}, they also proved that Dirichlet's theorem cannot be $(1/4)$-improved for almost every point on the curve $\phi(s)=(s,s^2)$ in $\mathbb{R}^2$. This result was generalized by Baker \cite{Bak78} for almost all points on smooth curves in $\mathbb{R}^2$, and by Bugeaud \cite{Bug02} for almost every point on the curve $\phi(s)=(s, s^2, \cdots, s^k)$ in $\mathbb{R}^k$; in each case the result holds for some small value $0<\mu \leq \epsilon$, where $\epsilon$ depends on the curve.\par
	Kleinbock and Weiss \cite{KW08} recast the problem in the language of homogeneous dynamics, and obtained $\epsilon$-improvable results for general measures. Later Shah \cite{Sha09Invention} studied the case $m=1$, and showed that if an analytic curve in $\R^n$ is not contained in any proper affine subspace, then almost every point on the curve is not DT-improvable. Lei Yang \cite{Yan16Proc} studied the case $m=n$, and proved an analogous result for square matrices. These results have been generalized to \emph{supergeneric} curves in $M_{m\times n}(\R)$ in the recent preprint \cite{SY16}, where an inductive algorithm was introduced to define \emph{generic} and \emph{supergeneric} curves. \par 
	In the meantime, Aka, Breuillard, Rosenzweig and de Saxc\'e \cite{ABRS18} worked on extremality of an analytic submanifold of $M_{m\times n}(\R)$, and found a sharp condition for extremality in terms of a certain family of algebraic sets called \emph{constraining pencils} (see \cite[Definition 1.1]{ABRS18}).\par 
	Based on \cite{SY16}, and combined with ideas from \cite{Sha09Duke1}\cite{ABRS18}, we replace supergeneric condition by a natural geometric condition, and obtain a sharper result. \par 
	We first make some preparations. Let $\mathrm{Gr}(m,m+n)$ denote the real Grassmannian variety of $m$-dimensional linear subspaces of $\R^{m+n}$.
	
	\begin{definition}[pencil; c.f. \cite{ABRS18} Definition 1.1]\label{def:pencil}
		Given a real vector space $W \subsetneq \R^{m+n}$, and an integer $r\leq m$, we define the pencil $\mathfrak{P}_{W,r}$ to be the set 
		\begin{equation}
		\{V\in\mathrm{Gr}(m,m+n)\colon \mathrm{dim}(V\cap W)\geq r\}.
		\end{equation}
	\end{definition}
	We call $\mathfrak{P}_{W,r}$ a \emph{constraining pencil} if
	\begin{equation}\label{eq:constraining_pencil}
	\frac{\dim W}{r} < \frac{m+n}{m};
	\end{equation}
	we call $\mathfrak{P}_{W,r}$ a \emph{weakly constraining pencil} if
	\begin{equation}\label{eq:weakly_constraining_pencil}
	\frac{\dim W}{r} \leq \frac{m+n}{m}.
	\end{equation}
	We say that the pencil $\mathfrak{P}_{W,r}$ is \emph{rational} if $W$ is rational, i.e. $W$ admits a basis in $\Q^{m+n}$.
	
	\begin{remark}\label{rem:pencils}
		\begin{enumerate}[(1)]
			\item If $m$ and $n$ are coprime, then $\frac{m+n}{m}$ is an irreducible fraction, and it follows that \eqref{eq:weakly_constraining_pencil} and \eqref{eq:constraining_pencil} are equivalent. Therefore weakly constraining pencils coincide with constraining pencils in this case.
			\item If $m=1$, then (weakly) constraining pencils are proper linear subspaces of $\R\mathbb{P}^n$.
		\end{enumerate}
	\end{remark}
	
	To avoid confusions, we explain the relationship between our pencils and the pencils in \cite{ABRS18}. Given $W \subsetneq \R^{m+n}$ and $0<r<m$, in \cite{ABRS18} a pencil $\mathcal{P}_{W,r}$ is defined to be an algebraic subset of $M_{m\times(m+n)}(\R)$. More precisely,
	\begin{equation}
	\mathcal{P}_{W,r} = \left\{ x\in M_{m\times(m+n)}(\R)\colon \dim(xW)\leq r \right\}.
	\end{equation}
	And a pencil $\mathcal{P}_{W,r}$ is called constraining if
	\begin{equation}
	\frac{\dim W}{r} > \frac{m+n}{m}.
	\end{equation}
	Let $x$ be a full rank $m\times(m+n)$ real matrix. For any subspace $E\subset\R^{m+n}$, let $E^{\vee}\subset(\R^{m+n})^{\ast}$ denote the set of linear functionals on $\R^{m+n}$ which vanish on $E$. Then $\dim(xW)\leq r$ if and only if $\dim\left((\ker x)^{\vee}\cap W^{\vee} \right) \geq m-r$. Hence
	\begin{equation}
	x\in\mathcal{P}_{W,r}\quad\iff\quad (\ker x)^{\vee}\in\mathfrak{P}_{W^{\vee},m-r}.
	\end{equation}
	Moreover, since $\dim W^{\vee} = m+n-\dim W$, we have
	\begin{equation}
	\frac{\dim W}{r} > \frac{m+n}{m} \quad \iff \quad \frac{\dim W^{\vee}}{m-r} < \frac{m+n}{m}.
	\end{equation}
	As explained in \cite[Section 4]{ABRS18}, we don't lose any essential information when passing to kernels. Therefore, our constraining pencils are dual to the constraining pencils in \cite{ABRS18}. We modified the definition to fit into our framework of Schubert varieties. See \Cref{def:Schubert_variety} and \Cref{thm:pencil_equal_Schubert} for more details. \\

	To any $\Psi\in M_{m\times n}(\R)$, we attach an $m$-dimensional subspace $V_{\Psi}\subset\R^{m+n}$ which is spanned by the row vectors of the full rank $m\times(m+n)$ matrix
	\begin{equation}
	\begin{bmatrix}
	I_{m\times m}\vert \Psi
	\end{bmatrix}.
	\end{equation} 
	\par 
	Let $\varphi\colon[a,b]\rightarrow M_{m\times n}(\R)$ be an analytic curve. It induces a curve on $\mathrm{Gr}(m,m+n)$ by
	\begin{align*}
	\Phi\colon[a,b]&\longrightarrow \mathrm{Gr}(m,m+n)\\
	s &\longmapsto V_{\varphi(s)}.
	\end{align*}
	
	We identify $\mathrm{Gr}(m,m+n)$ with $G/P$, where $G=\SL_{m+n}(\R)$ and $P=P(a)$ is the parabolic subgroup associated with $a(t)=\mathrm{diag}(t^n,\cdots,t^n,t^{-m},\cdots,t^{-m})$. Hence it makes sense to talk about partial flag subvarieties of $\mathrm{Gr}(m,m+n)$. (See \Cref{def:partial_flag_subvariety}.) \par 
	Now we are ready for our main theorem on DT-improvability.
	\begin{theorem}[DT-improvability]\label{thm:Dirichlet}
		Let $\varphi\colon[a,b]\rightarrow M_{m\times n}(\R)$ be an analytic curve. Suppose that both of the following hold:
		\begin{enumerate}[(A)]
			\item The image of $\Phi$ is not contained in any weakly constraining pencil;
			\item The image of $\Phi$ is not contained in any proper partial flag subvariety of the Grassmannian variety $\mathrm{Gr}(m,m+n)$ with respect to $a(t)$.
		\end{enumerate}
		Then for Lebesgue almost every $s\in [a,b]$, $\varphi(s)$ is not DT-improvable.
	\end{theorem}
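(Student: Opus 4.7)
The plan is to reformulate DT-improvability dynamically on $G/\Gamma := \SL_{m+n}(\R)/\SL_{m+n}(\Z)$ via the Dani correspondence, apply \Cref{thm:equidistribution1} to obtain equidistribution of the translated curve measures, and conclude by a standard Lebesgue density argument. First I would set up the correspondence: with $u(\Psi) := \begin{pmatrix} I_m & \Psi \\ 0 & I_n \end{pmatrix}$ and $v = (-q,p)^T$ for $(p,q) \in \Z^n \times \Z^m$, one computes
\[
\lVert a(t)u(\Psi)v\rVert_\infty = \max\bigl(t^n\lVert \Psi p - q\rVert_\infty,\ t^{-m}\lVert p\rVert_\infty\bigr),
\]
so $\Psi$ is $\mathrm{DT}_\mu$-improvable iff for every sufficiently large $t$ the lattice $a(t)u(\Psi)\Z^{m+n}$ contains a nonzero vector of sup-norm $\leq \mu$. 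Writing $K_\mu := \{\Lambda \in G/\Gamma : \inf_{0 \ne v \in \Lambda}\lVert v\rVert_\infty > \mu\}$, which is open and of positive Haar measure for each $\mu \in (0,1)$, $\Psi$ fails to be DT-improvable iff for every such $\mu$ the orbit $\{a(t)u(\Psi)\Gamma\}$ visits $K_\mu$ at arbitrarily large times.

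Next I would apply \Cref{thm:equidistribution1} with $L = G = \SL_{m+n}(\R)$, $\Lambda = \SL_{m+n}(\Z)$, $x_0 = e\Lambda$, and the analytic curve $\phi(s) := u(\varphi(s))$. Under the identification $gP \mapsto (gV_0)^\perp$ of $G/P$ with $\mathrm{Gr}(m,m+n)$, where $V_0 = \mathrm{span}(e_{m+1},\ldots,e_{m+n})$, a direct check shows that $\widetilde\phi(s) = \phi(s)^{-1}P$ corresponds to $\Phi(s) = V_{\varphi(s)}$. Then the correspondence between pencils and (weakly) unstable Schubert varieties (\Cref{thm:pencil_equal_Schubert}) turns hypothesis (A) of \Cref{thm:Dirichlet} into hypothesis (A) of \Cref{thm:equidistribution1}; and for hypothesis (B), if $\phi(I) \subset PHg$ for some $g \in G$ and some reductive subgroup $H \subset G$ containing $\{a(t)\}$ with $Hg\Lambda$ closed of finite $H$-volume, then $\widetilde\phi(I) \subset g^{-1}HP/P$, a proper partial flag subvariety of $\mathrm{Gr}(m,m+n)$ in the sense of \Cref{def:partial_flag_subvariety}, which hypothesis (B) of \Cref{thm:Dirichlet} excludes. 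Consequently \Cref{thm:equidistribution1} yields
\[
\lim_{t \to \infty} \frac{1}{b-a}\int_a^b f(a(t)u(\varphi(s))\Lambda)\,\mathrm{d}s = \int_{G/\Gamma} f\,\mathrm{d}\mu_{G/\Gamma}
\]
for every $f \in C_c(G/\Gamma)$, and the same limit holds on any subinterval since hypotheses (A) and (B) pass to subcurves.

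To finish I would run a Lebesgue density argument. Fix $\mu \in (0,1)$, pick a continuous $f \geq 0$ supported in $K_\mu$ with $c := \int f\,\mathrm{d}\mu_{G/\Gamma} > 0$, and set $A_{\mu,T} := \{s \in [a,b] : a(t)u(\varphi(s))\Gamma \notin K_\mu \text{ for all } t \geq T\}$. Since $\{s : \varphi(s)\text{ is DT-improvable}\} = \bigcup_{k,j} A_{\mu_k,T_j}$ for any sequences $\mu_k \uparrow 1$, $T_j \to \infty$, it suffices to show $|A_{\mu,T_0}| = 0$ for every choice of $\mu, T_0$. If $|A_{\mu,T_0}| > 0$, I would pick a Lebesgue density point $s_0 \in A_{\mu,T_0}$ and a small interval $I_\epsilon \ni s_0$ with $|A_{\mu,T_0} \cap I_\epsilon|/|I_\epsilon| > 1-\epsilon$; equidistribution on $I_\epsilon$ forces $|I_\epsilon|^{-1}\int_{I_\epsilon} f(a(t)u(\varphi(s))\Gamma)\,\mathrm{d}s \to c$, while for $t \geq T_0$ the integrand vanishes on $A_{\mu,T_0} \cap I_\epsilon$, bounding the integral by $\epsilon\lVert f\rVert_\infty$; taking $\epsilon < c/\lVert f\rVert_\infty$ gives a contradiction. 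I expect the main obstacle to be the second paragraph, specifically the careful verification that the geometric hypotheses (A) and (B) of \Cref{thm:Dirichlet} are exactly what \Cref{thm:equidistribution1} requires of $\phi = u\circ\varphi$; this means unfolding the identifications $G/P \leftrightarrow \mathrm{Gr}(m,m+n)$, $\mathfrak{P}_{W,r} \leftrightarrow$ weakly unstable Schubert variety, and partial flag subvariety $\leftrightarrow$ reductive subgroup with closed finite-volume orbit. The remaining steps are essentially formal once that dictionary is in place.
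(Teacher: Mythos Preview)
Your proposal is correct and follows exactly the route the paper indicates: the paper's own proof of \Cref{thm:Dirichlet} is a one-sentence reference to \Cref{thm:equidistribution1} and \Cref{thm:pencil_equal_Schubert} together with the Dani correspondence as in \cite{KW08,Sha09Invention,Yan16Proc,SY16}, and you have simply unpacked those standard steps (the dynamical reformulation of DT-improvability, the verification that hypotheses (A) and (B) match, and the Lebesgue density argument). There is nothing to add.
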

	\Cref{thm:Dirichlet} follows from \Cref{thm:equidistribution1} and \Cref{thm:pencil_equal_Schubert} via Dani's correspondence, as explained in \cite{KW08}\cite{Sha09Invention}\cite{Yan16Proc}\cite{SY16}. The proof also shows that for Lebesgue almost every $s\in [a,b]$, $\varphi(s)$ is not DT-improvable along $\mathcal{N}$ (see \cite{Sha10}), where $\mathcal{N}$ is any infinite set of positive integers.

	\subsection{Organization of the paper}
	In \Cref{sect:linear_stability}, we review the concept of Kempf's one-parameter subgroup, and use Kempf's numerical criterion to prove linear stability. \par 
	In \Cref{sect:nondivergence}, we review the $(C,\alpha)$-good property defined by Kleinbock and Margulis, and apply linearization technique combined with linear stability to prove non-divergence of translated measures. \par 
	In \Cref{sect:unipotent_invariance}, we apply the idea of twisting due to Shah, and prove a general result on unipotent invariance. \par 
	In \Cref{sect:focusing}, we use Ratner's theorem on unipotent flows and Dani-Margulis linearization technique to study the dynamical behavior of trajectories near singular sets, and obtain equidistribution results.\par 
	In \Cref{sect:Grassmannian_Schubert}, we study the special case of Grassmannians, and use Young diagrams to give a combinatorial description of constraining and weakly constraining pencils. \par 
	
	\subsection*{Acknowledgments}
	I would like to express my deep gratitude to my advisor Nimish Shah for suggesting this problem, for generously sharing his ideas, and for numerous helpful discussions and constant encouragement. I would like to thank Manfred Einsiedler and Alex Eskin for drawing \cite{Kha15} and \cite{KLP18} to my attention. I would like to thank David Anderson for many helpful discussions and his course on equivariant cohomology, where I learned a lot about Schubert varieties. Thanks are due to Menny Aka, Jayadev Athreya, Asaf Katz, Shi Wang, Barak Weiss and Runlin Zhang for helpful discussions and suggestions.\par
	Special thanks to Osama Khalil and Dmitry Kleinbock for discussions which helped me to find out a mistake in an earlier version of this paper. \par 
	I would like to thank my wife, Yushu Hu, for her unconditional support.

	\section{Linear stability and Kempf's one-parameter subgroups}\label{sect:linear_stability}
	Let $G=\mathbf{G}(\R)$ be a semisimple connected real algebraic group. If $\delta\colon \mathbb{G}_m \rightarrow \mathbf{G}$ is a homomorphism of real algebraic groups, we call $\delta$ a \emph{multiplicative one-parameter subgroup} of $G$. We associate a parabolic subgroup with $\delta$ as:
	\begin{equation}
	P(\delta) := \{ g \in G \colon \lim_{t\to\infty} \delta(t)g\delta(t)^{-1} \text{ exists in $G$}\},
	\end{equation}
	Let $\Gamma(G)$ be the set of the multiplicative one-parameter subgroups of $G$. Following Kempf \cite{Kem78}, we define the \emph{Killing length} of a multiplicative one-parameter subgroup $\delta$ by the equation
	\begin{equation}
	2\| \delta \|^2 = \mathrm{Trace}[(\mathrm{ad}(\delta_*\mathrm{d}/\mathrm{d}t))^2],
	\end{equation}
	and it follows from the invariance of the Killing form that the Killing length is $G$-invariant. \par
	Now fix a multiplicative one-parameter subgroup $a$ of $G$. We choose and fix a maximal $\R$-split torus $T$ of $G$ containing $\{a(t)\}$. Let $\Gamma(T)$ be the set of the multiplicative one-parameter subgroups of $T$, and $X(T)$ be the set of characters of $T$. We define a pairing as following: if $\chi\in X(T)$ and $\delta\in\Gamma(T)$, $\langle \chi,\delta \rangle$ is the integer which occurs in the formula $\chi(\delta(t)) = t^{\langle \chi,\delta \rangle}$. Let $(\cdot,\cdot)$ denote the positive definite bilinear form on $\Gamma(T)$ such that $(\delta, \delta) = \| \delta \|^2$. \par 
	By a suitable choice of positive roots $R^+$, we may assume that $a$ is a dominant cocharacter of in $T$. Recall that the set $\Gamma^+(T)$ of dominant cocharacters of $T$ is defined by:
	\begin{equation}
	\Gamma^+(T) = \{ \delta \in \Gamma(T) \colon \langle \delta,\alpha \rangle \geq 0,\, \forall \alpha\in R^+ \}.
	\end{equation}
	Let $B$ be the corresponding minimal parabolic subgroup of $G$ whose Lie algebra consists of all the non-positive root spaces. 
	\par
	Let $P=P(a)$ be the parabolic subgroup associated with $a$. Let $W^P$ denote set of minimal length coset representatives of the quotient $W/W_{P}$, where $W = N_G(T)/Z_G(T)$ and $W_{P} = N_{P}(T)/Z_{P}(T)$ are Weyl groups of $G$ and $P$. Then $W$ acts on $\Gamma(T)$ by conjugation: $w\cdot\delta = w\delta w^{-1}$. Denote $\delta^w=w\cdot\delta$. We take the Bruhat order on $W^P$ such that $w' \leq w$ if and only if the closure of the Schubert cell $BwP$ contains $Bw'P$. We note that the Bruhat order coincides with the folding order defined in \cite{KLP18} (See \cite[Remark 3.8]{KLP18}). \par
	
	\begin{definition}[Schubert variety]\label{def:Schubert_variety}
		Given an element $w\in W^P$, the standard Schubert variety $X_w$ is the Zariski closure of the Schubert cell $BwP$. A \emph{Schubert variety} is a subvariety of $G/P$ of the form $gX_w$, where $g\in G$ and $w\in W^P$. \par 
		We say that a Schubert variety $gX_w$ is \textbf{unstable} with respect to $a(t)$ if there exists $\delta\in\Gamma^{+}(T)$ such that $(\delta, a^w) > 0$. We say that $gX_w$ is \textbf{weakly unstable} with respect to $a(t)$ if there exists non-trivial $\delta\in\Gamma^{+}(T)$ such that $(\delta, a^w) \geq 0$. \par 
		For short, we will just say unstable or weakly unstable Schubert variety if $a(t)$ is clear in the context.
	\end{definition} \par 
	
	\begin{remark}
		In this article, when we project from $G$ to $G/P$, we always take the following map
		\begin{align}
		\pi_P\colon&G\longrightarrow G/P \nonumber \\
		&g \longmapsto g^{-1}P. 
		\end{align}
		When we write $BwP$, we treat it as a subvariety of $G/P$; while $Pw^{-1}B$ is treated as a subset of $G$.
	\end{remark}
	
	For $\delta\in \Gamma^{+}(T)$, define the subset $W^+(\delta, a)$ of $W^P$ as
	\begin{equation}\label{eq:W_plus}
	W^+(\delta, a) = \{ w \in W^P \colon (\delta, a^w) > 0 \},
	\end{equation}
	and we define $W^{-}(\delta, a), W^{0+}(\delta, a)$ and $W^{0-}(\delta, a)$ similarly, with $<$, $\geq$ and $\leq$ in place of $>$ in \eqref{eq:W_plus} respectively. We note that $W^{+}(\delta,a)$ is a ``metric thickening'' as defined in \cite[Section 3.4]{KLP18}.
	
	\begin{lemma}\label{lem:union_of_Schubert}
		\begin{enumerate}[(a)]
			\item Let $w'\leq w$ be elements in $W^P$, and $\delta\in\Gamma^{+}(T)$. Then one has $(\delta, a^{w'})\geq (\delta, a^w)$.
			\item $\bigsqcup_{w\in W^+(\delta, a)}BwP$ is a finite union of unstable Schubert subvarieties of $G/P$.
			\item $\bigsqcup_{w\in W^{0+}(\delta, a)}BwP$ is a finite union of weakly unstable Schubert subvarieties of $G/P$.
		\end{enumerate}
	\end{lemma}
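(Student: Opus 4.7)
The plan is to prove (a) using a classical monotonicity property of the Bruhat order under the Weyl group action on dominant cocharacters, and then derive (b) and (c) as formal consequences by showing that $W^+(\delta,a)$ and $W^{0+}(\delta,a)$ are downward closed in the Bruhat order.

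For part (a), I would invoke the following well-known fact: for any dominant cocharacter $\mu \in \Gamma^+(T)$ and any $w' \leq w$ in the Bruhat order, the difference $w'\mu - w\mu$ lies in the non-negative cone spanned by the positive coroots. (This is standard, for instance in the language of weight polytopes of finite-dimensional representations; the restriction of the Bruhat order on $W$ to the minimal coset representatives $W^P$ agrees with the induced order.) Applying this to $\mu = a$, we write $a^{w'} - a^w = \sum_i c_i \alpha_i^{\vee}$ with $c_i \geq 0$. The bilinear form $(\cdot,\cdot)$ restricted to $\Gamma(T)$ is Weyl-invariant and comes from the Killing form, so for every positive coroot $\alpha^{\vee}$ the pairing $(\delta, \alpha^{\vee})$ is a positive multiple of $\langle \alpha, \delta \rangle$, which is $\geq 0$ because $\delta \in \Gamma^+(T)$. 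Summing, $(\delta, a^{w'}) - (\delta, a^w) \geq 0$.

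For parts (b) and (c), I would exploit the Bruhat decomposition of Schubert varieties,
\begin{equation*}
X_w \;=\; \overline{BwP} \;=\; \bigsqcup_{\substack{w' \in W^P \\ w' \leq w}} Bw'P.
\end{equation*}
By (a), if $w \in W^+(\delta,a)$ and $w' \leq w$, then $(\delta, a^{w'}) \geq (\delta, a^w) > 0$, so $w' \in W^+(\delta,a)$; thus $W^+(\delta,a)$ is downward closed. Letting $M \subset W^+(\delta,a)$ denote the set of Bruhat-maximal elements, which is finite since $W^P$ is finite, we obtain
\begin{equation*}
\bigsqcup_{w \in W^+(\delta,a)} BwP \;=\; \bigcup_{w \in M} X_w,
\end{equation*}
and each $X_w$ with $w \in M$ is unstable because the same $\delta \in \Gamma^+(T)$ witnesses $(\delta, a^w) > 0$. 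Part (c) is identical: the analogous inequality $(\delta, a^{w'}) \geq (\delta, a^w) \geq 0$ from (a) shows $W^{0+}(\delta,a)$ is downward closed, and the Bruhat-maximal elements give a finite union of weakly unstable Schubert varieties (using the non-trivial $\delta$ as witness, assumed implicitly or chosen from the hypothesis that $W^{0+}$ is non-empty in a non-trivial sense).

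The main technical obstacle is part (a), and specifically the step that $w' \leq w$ implies $w'a - wa$ lies in the positive coroot cone for $a$ dominant. This is a classical fact but requires some care with conventions (Bruhat order vs.\ folding order, roots vs.\ coroots, the precise compatibility between the Killing form and the root-coroot pairing); I would cite this from a standard reference rather than reprove it. Once (a) is in hand, the downward-closedness argument for (b) and (c) is purely formal.
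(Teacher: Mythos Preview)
Your proposal is correct and follows the same route as the paper: part (a) is cited from \cite[Lemma 3.4]{KLP18} (the paper does not spell out the coroot-cone argument you sketch, but that is precisely the content of the cited lemma), and the paper simply states that (b) and (c) follow from (a), which is exactly your downward-closedness argument.
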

	
	\begin{proof}
		Both (b) and (c) follow from (a). For a proof of (a), see e.g. \cite[Lemma 3.4]{KLP18}.
	\end{proof}
	
	Let $\rho: G\rightarrow \GL(V)$ be any finite dimensional linear representation of $G$. Let us recall some notions from geometric invariant theory (see e.g. \cite{MFK94} for more details). A nonzero vector $v$ is called \emph{unstable} if the closure of the $G$-orbit $Gv$ contains the origin. $v$ is called \emph{semistable} if it is not unstable. For any $v\in V\backslash\{0\}$ and $\delta\in\Gamma(G)$, by \cite[Lemma 1.2]{Kem78} we can write $v=\sum v_i$ where $\delta(t)v_i = t^iv_i$. Define the numerical function $m(v,\delta)$ to be the maximal\footnote[1]{It is ``minimal'' in Kempf's original definition. Since we are taking limit as $t$ tends to $\infty$ instead of $0$, our numerical function is actually opposite to Kempf's.} $i$ such that $v_i\neq 0$.\par
	By a theorem of Kempf (see \cite[Theorem 4.2]{Kem78}), the function $m(v, \delta)/\|\delta\|$ has a negative minimum value $B_v$ on the set of non-trivial multiplicative one-parameter subgroups $\delta$. Let $\Lambda(v)$ denote the set of primitive multiplicative one-parameter subgroup $\delta$ such that $m(v, \delta) = B_v\cdot\|\delta\|$. Kempf \cite[Theorem 4.2]{Kem78} shows that the parabolic subgroup $P(\delta)$ does not depend on the choice of $\delta\in\Lambda(v)$, which is denoted by $P(x)$. Moreover, $\Lambda(v)$ is a principal homogeneous space under conjugation by the unipotent radical of $P(x)$. In particular, for any $\delta$ in $\Lambda(v)$ and $b$ in $P(x)$, we know that $b\delta b^{-1}$ is also contained in $\Lambda(v)$. \\
	
	For $v\in V\backslash\{0\}$, define
	\begin{equation}\label{eq:definiton_of_zero_unstable}
	G(v, V^{-}(a)) = \{ g\in G \colon gv \in V^{-}(a) \},
	\end{equation}
	where
	\begin{equation}
	V^{-}(a) = \{ v\in V \colon \lim_{t\to\infty}a(t)v = 0 \}.
	\end{equation}
	As noted in \cite[Section 3.3]{Kha15}, though the limits in \cite{Kem78} are defined algebraically, they coincide with limits in the Hausdorff topology induced from the usual topology on $\R$, by \cite[Lemma 1.2]{Kem78}. \par 
	Now we proceed to the main result of this section.
	
	\begin{proposition}\label{prop:basic_lemma}
		For any $v\in V\backslash\{0\}$, there exits $\delta_0\in\Gamma^{+}(T)$ and $g_0\in G$ such that
		\begin{equation}\label{eq:basic_inclusion}
		G(v,V^{-}(a)) \subset \bigsqcup_{w\in W^+(\delta_0, a)}Pw^{-1}Bg_0^{-1}.
		\end{equation}
	\end{proposition}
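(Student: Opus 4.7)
The plan is to combine Kempf's numerical criterion, in the Hilbert--Mumford weight-polytope form, with the Bruhat decomposition.

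If $v$ is $G$-semistable then so is $gv$ for every $g\in G$, so no cocharacter contracts $gv$ to the origin and $G(v,V^{-}(a))=\emptyset$, making the inclusion vacuous. Assume henceforth that $v$ is unstable. Apply Kempf's theorem to pick a primitive optimal $\delta_{v}\in\Lambda(v)$ and set $M:=m(v,\delta_{v})<0$. Every cocharacter of $G$ is $G$-conjugate to a unique element of $\Gamma^{+}(T)$, so choose $g_{0}\in G$ with $\delta_{0}:=g_{0}^{-1}\delta_{v}g_{0}\in\Gamma^{+}(T)$, and set $u:=g_{0}^{-1}v$. By $G$-equivariance of Kempf's data, $\delta_{0}\in\Lambda(u)$, $P(u)=P(\delta_{0})=:Q$, and $u\in V^{\leq M}(\delta_{0})$ with nonzero weight-$M$ component.

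Given $g\in G$, Bruhat provides a unique factorization $gg_{0}=pw^{-1}b$ with $p\in P$, $b\in B$, $w\in W^{P}$, so $gv=pw^{-1}bu$. Since $P=P(a)$ stabilises the $a$-weight filtration, in particular $V^{-}(a)$, the condition $gv\in V^{-}(a)$ is equivalent to $w^{-1}bu\in V^{-}(a)$. The identity $a(t)w^{-1}=w^{-1}(waw^{-1})(t)=w^{-1}a^{w}(t)$ then rewrites this as $bu\in V^{-}(a^{w})$; it remains to show this forces $(\delta_{0},a^{w})>0$, i.e.\ $w\in W^{+}(\delta_{0},a)$.

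The crucial input is $\delta_{0}\in\Lambda(bu)$. Since $\delta_{0}$ is dominant, $B\subset P(\delta_{0})=P(u)$, so $b\in P(u)$; Kempf's description of $\Lambda(u)$ as a principal $R_{u}(P(u))$-homogeneous space, combined with the facts that $P(u)$ normalises $R_{u}(P(u))$ and that its Levi $L=Z_{G}(\delta_{0})$ centralises $\delta_{0}$, yields the $P(u)$-invariance $\Lambda(bu)=\Lambda(u)\ni\delta_{0}$. Identify $X_{\ast}(T)\otimes\R$ with $X(T)\otimes\R$ via $(\cdot,\cdot)$. The Hilbert--Mumford description of Kempf-optimality says the unique point of the weight polytope $K(bu):=\mathrm{conv}\{T\text{-weights of }bu\}$ closest to the origin is $p^{\ast}=(M/\|\delta_{0}\|^{2})\delta_{0}$; pairing against $a^{w}$ gives $(p^{\ast},a^{w})=(M/\|\delta_{0}\|^{2})(\delta_{0},a^{w})$. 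If $(\delta_{0},a^{w})\leq 0$, then since $M<0$ we obtain $(p^{\ast},a^{w})\geq 0$; writing $p^{\ast}$ as a convex combination of weights of $bu$ lying on the supporting face $\{(\cdot,\delta_{0})=M\}\cap K(bu)$, some weight $\lambda$ of $bu$ satisfies $(\lambda,a^{w})\geq 0$, hence $m(bu,a^{w})\geq 0$ and $bu\notin V^{-}(a^{w})$, a contradiction.

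The main obstacle is the third paragraph: the convex-geometric upgrade of $(\delta_{0},a^{w})\leq 0$ into the existence of a nonnegative $a^{w}$-pairing with some weight of $bu$ rests on the moment-polytope / closest-point form of the Hilbert--Mumford criterion, and on promoting Kempf-optimality of $\delta_{0}$ from $u$ to every $bu$ for $b\in B$ via the Levi--unipotent structure of the Kempf parabolic $P(\delta_{0})$.
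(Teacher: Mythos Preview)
Your proof is correct and shares the paper's overall architecture: dispose of the semistable case, conjugate a Kempf-optimal cocharacter into $\Gamma^{+}(T)$ to get $\delta_{0}\in\Lambda(u)$ with $u=g_{0}^{-1}v$, Bruhat-decompose $gg_{0}=pw^{-1}b$ to reduce $gv\in V^{-}(a)$ to $bu\in V^{-}(a^{w})$, and use $B\subset P(\delta_{0})$ together with the $P(\delta_{0})$-invariance of $\Lambda$ to obtain $\delta_{0}\in\Lambda(bu)$. The divergence is in how the contradiction is extracted when $(\delta_{0},a^{w})\le 0$. The paper perturbs the cocharacter: it sets $\delta_{N}=N\delta_{0}+a^{w}$ and checks, by an explicit derivative computation, that for every $T$-weight $\chi$ of $bu$ the quantity $\langle\chi,\delta_{0}+sa^{w}\rangle^{2}/\|\delta_{0}+sa^{w}\|^{2}$ has positive derivative at $s=0$ (using $\langle\chi,a^{w}\rangle<0$, $\langle\chi,\delta_{0}\rangle<0$, and $(\delta_{0},a^{w})\le 0$), whence $m(bu,\delta_{N})/\|\delta_{N}\|<m(bu,\delta_{0})/\|\delta_{0}\|$ for large $N$, contradicting optimality. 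You instead invoke the Ness--Kirwan moment-polytope reformulation: the nearest point of the $T$-weight polytope $K(bu)$ to the origin is $p^{\ast}=(M/\|\delta_{0}\|^{2})\delta_{0}$, and from $(p^{\ast},a^{w})\ge 0$ you pull out, by convexity of the supporting face, a weight $\lambda$ with $\langle\lambda,a^{w}\rangle\ge 0$, contradicting $bu\in V^{-}(a^{w})$. Your route is more geometric and conceptually clean but imports the closest-point characterisation of Kempf optimality as an external fact; the paper's calculation is entirely self-contained from \cite[Theorem 4.2]{Kem78}. The two are really dual: your polytope step is the integrated, convex-geometric form of the paper's infinitesimal variation of $\delta_{0}$ in the direction $a^{w}$.
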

	\begin{proof}
		By definition we have the following identities because of $G$-equivariance:
		
		\begin{equation}
		G(gv, V^{-}(a)) = G(v, V^{-}(a))g^{-1},\quad \forall g\in G;
		\end{equation}
		\begin{equation}\label{eq:G_equivarianc_Lambda}
		\Lambda(gv) = g\Lambda(v)g^{-1},\quad \forall g\in G.
		\end{equation} \par 
		If $v$ is semistable, then $G(v, V^{-}(a))$ is empty, and the conclusion trivially holds. From now on we assume that $v$ is unstable, and thus $\Lambda(v)$ is non-empty. Take $\delta_1\in\Lambda(v)$, then there exists $g_0\in G$ and $\delta_0\in\Gamma^{+}(T)$ such that $g_0^{-1}\delta_1g_0 = \delta_0$. It follows from \eqref{eq:G_equivarianc_Lambda} that $\delta_0\in\Lambda(g_0^{-1}v)$.\par
		We prove by contradiction. Suppose that \eqref{eq:basic_inclusion} does not hold. Considering the Bruhat decomposition
		\begin{equation}
		G = \bigsqcup_{w\in W^P} Pw^{-1}B,
		\end{equation}
		we can take $g\in G(g_0^{-1}v, V^{-}(a))$ such that it can be written as
		\begin{equation}
		g = pw^{-1}b, \text{ where } p\in P,\, w\in W^{0-}(\delta_0, a),\, b\in B.
		\end{equation}
		Write $v' = bg_0^{-1}v$. In view of \eqref{eq:G_equivarianc_Lambda}, by \cite[Theorem 4.2(3)]{Kem78} we have $\Lambda(g_0^{-1}v)=\Lambda(v')$. Hence $\delta_0$ is an element in $\Lambda(v')$. \par 
		We also have $v'\in V^{-}(a^w)$. Indeed, $gg_0^{-1}v\in V^-(a)$ implies that $pw^{-1}v'\in V^{-}(a)$. Since $V^{-}(a)$ is $P$-invariant, we know that $w^{-1}v'\in V^{-}(a)$. Hence $v'\in V^{-}(a^w)$. \par
		Take a large integer $N$, we define $\delta_{N} = N\delta_0 + a^w$. We claim that for a sufficiently large $N$, one has 
		\begin{equation}\label{eq:destablize_faster}
		\frac{m(v', \delta_{N})}{\| \delta_{N} \|} < \frac{m(v', \delta_0)}{\| \delta_0 \|},
		\end{equation}
		and this will contradict the fact that $\delta_0\in\Lambda(v')$.\par 
		To prove the claim, consider the weight space decomposition $V = \bigoplus V_{\chi}$, where $T$ acts on $V_{\chi}$ by multiplication via the character $\chi$ of $T$. It suffices to prove that for any $\chi$ such that the projection of $v'$ on $V_{\chi}$ is nonzero, one has
		\begin{equation}\label{eq:higher_speed}
		\frac{\langle \chi, \delta_{N} \rangle}{\|\delta_{N} \|} <
		\frac{\langle \chi, \delta_0 \rangle}{\| \delta_0 \|}.
		\end{equation}
		To prove \eqref{eq:higher_speed}, we define an auxiliary function:
		\begin{equation}
		\begin{split}
		f(s) &= \frac{\langle \chi, \delta_0 + s\cdot a^w \rangle ^ 2}{\| \delta_0 + s \cdot a^w \| ^ 2}\\
		&= \frac{\langle \chi , \delta_0 \rangle ^ 2 + 2s\langle \chi, \delta_0 \rangle\langle \chi, a^w \rangle + s^2 \langle \chi, a^w \rangle ^2}
		{(\delta_0, \delta_0) + 2s(\delta_0, a^w) + s^2(a^w, a^w)}
		\end{split}
		\end{equation}
		Compute its derivative at 0:
		\begin{equation}
		f'(0) = \frac{2\langle \chi,\delta_0\rangle\langle\chi,a^w\rangle(\delta_0, \delta_0) - 2(\delta_0, a^w)\langle\chi, \delta_0\rangle^2}
		{(\delta_0, \delta_0)^2}
		\end{equation}
		Since $v'\in V^{-}(a^w)$, we know that $\langle \chi, a^w \rangle < 0$. Since $\delta_0\in\Lambda(v')$, we know that $\langle \chi, \delta_0 \rangle < 0$. Also by the choice of $w$ we know that $(\delta_0, a^w) \leq 0$. Combining the above one gets $f'(0) > 0$. Hence for $N$ large we have
		\begin{equation}\label{eq:temp14}
		f(1/N) > f(0),
		\end{equation}
		and \eqref{eq:higher_speed} follows because each side of \eqref{eq:temp14} is the square of each side of \eqref{eq:higher_speed}. Therefore \eqref{eq:destablize_faster} holds, contradicting the fact that $\delta_0\in\Lambda(v')$.
	\end{proof}
	
	Now we are ready to prove \Cref{thm:linear_stability}.
	
	\begin{proof}[Proof of \Cref{thm:linear_stability}]
		We prove by contradiction. Suppose that for all $C>0$, there exist $t$ and $v$ such that \eqref{eq:linear_stablility} does not hold. We take a sequence $C_i\to 0$. Then after passing to a subsequence we can find $t_i\to\infty$ and a sequence $(v_i)_{i\in\N}$ in $V$ such that
		\begin{equation}
		\sup_{s\in [a,b]}\lVert a(t_i)\phi(s)v_i \rVert < C_i\lVert v_i \rVert.
		\end{equation}
		Without loss of generality we may assume that $\lVert v_i \rVert = 1$. Then after passing to a subsequence, we may assume that $v_i \to v_0$. Hence we have
		\begin{equation}
		\sup_{s\in [a,b]}\lVert a(t_i)\phi(s)v_0 \rVert \stackrel{t_i\to\infty}{\longrightarrow} 0.
		\end{equation}
		Therefore $\phi(s)v_0$ is contained in $V^{-}(a)$ for all $s\in[a,b]$, and it follows that the image of $\phi$ is contained in $G(v_0, V^{-}(a))$. (See \eqref{eq:definiton_of_zero_unstable}.) By \Cref{lem:union_of_Schubert}(b) and \Cref{prop:basic_lemma}, the image of $G(v_0, V^{-}(a))$ under $\pi_P$ in $G/P$ is a finite union of unstable Schubert varieties. But $\phi$ is analytic, which implies that the image of $\widetilde{\phi}$ is contained in one single unstable Schubert variety. This contradict our assumption on $\phi$.
	\end{proof}
	
	\Cref{prop:basic_lemma} and \Cref{thm:linear_stability} will play a central role in proving the non-divergence of translated measures. To handle non-focusing, one needs a slightly generalized version, motivated by the work of Richard and Shah \cite[Section 6]{RS16}. We need the following result due to Kempf.
	
	\begin{lemma}[\cite{Kem78} Lemma 1.1(b)]\label{lem:Kempf_reduction}
		Let $G$ be a connected reductive algebraic group over a field $k$, and $X$ be any affine $G$-scheme. If $S$ is a closed $G$-subscheme of $X$, then there is a $G$-equivariant morphism $f:X\rightarrow W$, where $W$ is a representation of $G$, such that $S$ is the scheme-theoretic inverse image $f^{-1}(0)$ of the reduced closed subscheme of $W$ supported by zero.
	\end{lemma}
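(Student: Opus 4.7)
The plan is to realize $W$ as the linear dual of a finite-dimensional $G$-stable generating subspace of the defining ideal of $S$, and then to read off the claimed property directly from the construction. First, let $I := I_S \subset k[X]$ denote the ideal cutting out $S$; because $S$ is a $G$-subscheme of $X$, the ideal $I$ is stable under the induced $G$-action on $k[X]$. Assuming $X$ is of finite type over $k$ so that $I$ is finitely generated (the general case is handled at the end), pick generators $f_1, \dots, f_n$ of $I$.

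The key tool is local finiteness: the $G$-action on $k[X]$ is rational, so each $f_i$ lies in a finite-dimensional $G$-stable subspace $V_i$ of $k[X]$. Since $I$ is itself $G$-stable and contains $f_i$, the $G$-submodule generated by $f_i$ lies inside $I$, so one may arrange $V_i \subset I$. Taking $V := V_1 + \cdots + V_n$ yields a finite-dimensional $G$-stable subspace of $I$ which still generates $I$ as an ideal, since $V$ contains every $f_i$.

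Now set $W := V^*$ as a representation of $G$, with coordinate ring $k[W] = \mathrm{Sym}(V)$. The inclusion $V \hookrightarrow k[X]$ is $G$-equivariant and extends by the universal property of $\mathrm{Sym}$ to a $G$-equivariant $k$-algebra homomorphism $\mathrm{Sym}(V) \to k[X]$, which by $\mathrm{Spec}$ corresponds to a $G$-equivariant morphism $f \colon X \to W$. The reduced origin $\{0\} \subset W$ is cut out by the maximal ideal $\mathfrak{m}_0 = \mathrm{Sym}^{\geq 1}(V)$, generated as an ideal of $k[W]$ by $V$. Hence the scheme-theoretic preimage $f^{-1}(\{0\})$ is defined by the ideal of $k[X]$ generated by the image of $V$, which is exactly $I$ by construction. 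Therefore $f^{-1}(\{0\}) = S$ as schemes.

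The only step requiring care is the local finiteness of $k[X]$ as a $G$-module; this is a classical fact for actions of an affine algebraic group on an affine scheme and does not use reductivity (reductivity is presumably present so that this lemma meshes with the complete-reducibility constructions elsewhere in Kempf's paper). If $X$ is not of finite type, $I$ need not be finitely generated, and one instead takes $V$ to be a (possibly infinite-dimensional) $G$-stable subspace of $I$ which spans $I$ as an ideal, with $W$ the associated affine space; the argument then proceeds verbatim.
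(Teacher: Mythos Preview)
The paper does not supply its own proof of this lemma; it is stated with a citation to \cite{Kem78} and then invoked as a black box. Your argument is the standard one and is correct: use local finiteness of the rational $G$-action on $k[X]$ to find a finite-dimensional $G$-stable subspace $V\subset I_S$ generating the ideal, then take $W=V^*$ so that $f\colon X\to W$ corresponds to $\mathrm{Sym}(V)\to k[X]$ and $f^{-1}(0)$ is cut out by the ideal generated by $V$, which is $I_S$. This is essentially Kempf's own argument, and your remark that reductivity plays no role in the lemma itself is accurate.
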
 \par 
	
	In view of Kempf's \Cref{lem:Kempf_reduction}, the following is a corollary of \Cref{prop:basic_lemma}.
	
	\begin{corollary}\label{cor:basic_lemma1}
		Let the notations be as in the beginning of this section. Let $S$ be the real points of any $G$-subscheme of $V$. For any $v\in V$, define the following subset of $G$:
		\begin{equation}\label{eq:cor_basic_lemma1}
		G(v, S,a) = \{ g\in G \colon \lim_{t\to\infty}a(t)gv\in S \}.
		\end{equation}
		Then for any $v\in V\backslash S$, there exists $\delta_0\in\Gamma^{+}(T)$ and $g_0\in G$ such that
		\begin{equation}
		G(v, S,a) \subset \bigsqcup_{w\in W^+(\delta_0, a)}Pw^{-1}Bg_0^{-1}.
		\end{equation}
	\end{corollary}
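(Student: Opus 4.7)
The plan is to reduce \Cref{cor:basic_lemma1} to \Cref{prop:basic_lemma} by replacing the closed $G$-subscheme $S\subset V$ with the zero fibre of a $G$-equivariant linear map. Since $V$ is an affine $G$-scheme and $S$ a closed $G$-subscheme of $V$, Kempf's \Cref{lem:Kempf_reduction} produces a $G$-representation $W$ together with a $G$-equivariant morphism $f\colon V\to W$ such that $S=f^{-1}(0)$. On real points $f$ is a polynomial map, hence continuous in the Hausdorff topology, and by the remark before \Cref{prop:basic_lemma} taking limits along $a(t)$ is compatible with the usual topology.

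Next I would show the key containment
\begin{equation*}
G(v,S,a)\subset G(f(v),W^{-}(a)).
\end{equation*}
Indeed, if $g\in G(v,S,a)$, then $u:=\lim_{t\to\infty}a(t)gv$ exists and lies in $S$. Applying $f$ and using continuity together with $G$-equivariance,
\begin{equation*}
\lim_{t\to\infty}a(t)\,g\,f(v)=\lim_{t\to\infty}f(a(t)gv)=f(u)\in f(S)=\{0\},
\end{equation*}
so $g\cdot f(v)\in W^{-}(a)$, as required.

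Since $v\in V\setminus S$, we have $f(v)\neq 0$ in $W$, so \Cref{prop:basic_lemma} applies to the nonzero vector $f(v)$ in the linear representation $W$. It yields $\delta_0\in\Gamma^{+}(T)$ and $g_0\in G$ with
\begin{equation*}
G(f(v),W^{-}(a))\subset\bigsqcup_{w\in W^{+}(\delta_0,a)}Pw^{-1}Bg_0^{-1},
\end{equation*}
and combining this with the containment above gives the desired inclusion for $G(v,S,a)$.

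The only real subtlety is checking that the scheme-theoretic setup of \Cref{lem:Kempf_reduction} produces an $f$ whose associated map on real points behaves well with respect to the limit along $a(t)$; this is immediate because $f$ is polynomial and because $G$-equivariance at the level of schemes specializes to equivariance at the level of $\R$-points. Once this verification is in place, the argument is otherwise a direct assembly of \Cref{lem:Kempf_reduction} and \Cref{prop:basic_lemma}, with no further combinatorial work needed.
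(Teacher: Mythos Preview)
Your proposal is correct and follows essentially the same approach as the paper: apply \Cref{lem:Kempf_reduction} to obtain $f\colon V\to W$ with $S=f^{-1}(0)$, deduce $G(v,S,a)\subset G(f(v),W^{-}(a))$, and then invoke \Cref{prop:basic_lemma} for the nonzero vector $f(v)\in W$. You spell out the continuity and equivariance justification for the containment and the reason $f(v)\neq 0$ more explicitly than the paper does, but the structure of the argument is identical.
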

	
	\begin{proof}
		By \Cref{lem:Kempf_reduction}, there exist a $G$-equivariant morphism $f:V\rightarrow W$ where $f^{-1}(0) = S$. Hence it follows from the definition that
		\begin{equation}
		G(v, S, a) \subset G(f(v), W^{-}(a)).
		\end{equation}
		Now it remains to apply \Cref{prop:basic_lemma} for $W$ and $f(v)$.
	\end{proof}
	
	Now we present the following variance of \Cref{prop:basic_lemma}.
	
	\begin{proposition}\label{prop:basic_lemma2}
		Let $v\in V$ such that the $G$-orbit $Gv$ is not closed. Define
		\begin{equation}
		G(v, V^{0-}(a)) = \{ g\in G : gv\in V^{0-}(a) \},
		\end{equation}
		where
		\begin{equation}
		V^{0-}(a) = \{ v\in V : \lim_{t\to \infty} a(t)v \text{ exists} \}.
		\end{equation}
		Then there exists $\delta_0\in\Gamma^{+}(T)$ and $g_0\in G$ such that
		\begin{equation}\label{eq:basic_lemma2}
		G(v, V^{0-}(a)) \subset \bigsqcup_{w\in W^{0+}(\delta_0, a)}Pw^{-1}Bg_0^{-1}.
		\end{equation}
	\end{proposition}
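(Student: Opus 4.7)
The plan is in two stages: first reduce to the case where the relevant vector is genuinely unstable (so that a Kempf-optimal one-parameter subgroup is available), and then adapt the proof of \Cref{prop:basic_lemma} allowing non-strict inequalities in the appropriate places.

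For the reduction I would use that $G$ is reductive, so $\overline{Gv}$ contains a unique closed $G$-orbit $Gv_0$, and $Gv_0$ is closed in $V$; since $Gv$ is not closed, $v\notin Gv_0$. Applying \Cref{lem:Kempf_reduction} with $X=V$ and $S=Gv_0$ produces a $G$-equivariant morphism $\psi\colon V\to W$ into a finite-dimensional $G$-representation $W$ with $\psi^{-1}(0)=Gv_0$. Set $v^{*}=\psi(v)\neq 0$; since $0=\psi(v_0)\in\psi(\overline{Gv})\subset\overline{Gv^{*}}$, the vector $v^{*}$ is unstable in $W$. Continuity and $G$-equivariance of $\psi$ give $G(v,V^{0-}(a))\subset G(v^{*},W^{0-}(a))$, so it suffices to establish the desired Bruhat-cell inclusion for the unstable $v^{*}\in W$ (the resulting $\delta_0,g_0$ will then work for $v$).

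Next I would pick $\delta_1\in\Lambda(v^{*})$ (available because $v^{*}$ is unstable), write $\delta_1=g_0\delta_0g_0^{-1}$ with $\delta_0\in\Gamma^{+}(T)$, so that $\delta_0\in\Lambda(g_0^{-1}v^{*})$, and argue by contradiction: suppose some $g=pw^{-1}b$ with $w\notin W^{0+}(\delta_0,a)$ (equivalently $(\delta_0,a^w)<0$ strictly) lies in $G(v^{*},W^{0-}(a))$. Setting $v'=bg_0^{-1}v^{*}$, the same observation as in \Cref{prop:basic_lemma} --- that $b\in B\subset P(\delta_0)=P(g_0^{-1}v^{*})$ normalizes $\Lambda(g_0^{-1}v^{*})$ --- yields $\delta_0\in\Lambda(v')$. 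The $P$-invariance of $W^{0-}(a)$ then gives $v'\in W^{0-}(a^w)$, so $\langle\chi,a^w\rangle\leq 0$ for every $T$-weight $\chi$ occurring in $v'$.

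The contradiction is produced by the same perturbation $\delta_N=N\delta_0+a^w$ and auxiliary real-variable function as in \Cref{prop:basic_lemma}. Because $v'$ is unstable and $\delta_0\in\Lambda(v')$, one still has $\langle\chi,\delta_0\rangle<0$ strictly for each such weight $\chi$; the only change from the previous argument is that $\langle\chi,a^w\rangle\leq 0$ need not be strict. Nevertheless the derivative at $0$ of the auxiliary function equals
\[
\frac{2\langle\chi,\delta_0\rangle\langle\chi,a^w\rangle(\delta_0,\delta_0)-2(\delta_0,a^w)\langle\chi,\delta_0\rangle^{2}}{(\delta_0,\delta_0)^{2}},
\]
whose first summand is nonnegative (product of a negative, a nonpositive, and a positive number) and whose second summand is strictly positive thanks to $(\delta_0,a^w)<0$ and $\langle\chi,\delta_0\rangle^{2}>0$. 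For large $N$ this forces $m(v',\delta_N)/\lVert\delta_N\rVert<m(v',\delta_0)/\lVert\delta_0\rVert$, contradicting $\delta_0\in\Lambda(v')$. The main obstacle is the Kempf-Lemma reduction, which is essential precisely because $v$ itself may be semistable, so $\Lambda(v)$ in Kempf's sense is empty and no Kempf one-parameter subgroup of $v$ exists directly; replacing $v$ by an unstable image $v^{*}$ in an auxiliary representation $W$ brings us back into the framework of \Cref{prop:basic_lemma}, after which only the careful bookkeeping of strict versus non-strict inequalities in the perturbation step needs to be revisited.
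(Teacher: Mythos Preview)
Your proof is correct and follows essentially the same route as the paper's: reduce via \Cref{lem:Kempf_reduction} to an unstable vector in an auxiliary representation, then rerun the perturbation argument of \Cref{prop:basic_lemma} with the roles of the strict and non-strict inequalities swapped. The only cosmetic difference is your choice of the closed $G$-invariant set $S$: you take the unique closed orbit in $\overline{Gv}$, whereas the paper takes the full boundary $S=\partial(Gv)=\overline{Gv}\setminus Gv$; either choice yields an unstable image and the rest of the argument is identical.
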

	
	\begin{proof}
		Let $S=\partial(Gv)$. Since any $G$-orbit is open in its closure, we know that $S$ is closed and $G$-invariant. By \Cref{lem:Kempf_reduction}, there exists a $G$-equivariant morphism $f:V\rightarrow W$ where $f^{-1}(0) = S$. Notice that $f(v)$ is unstable in $W$. We claim that
		\begin{equation}\label{eq:temp10}
		G(f(v),W^{0-}(a))\subset \bigsqcup_{w\in W^{0+}(\delta_0, a)}Pw^{-1}Bg_0^{-1}.
		\end{equation}
		\par 
		To prove the claim, we argue with $W$ and $f(v)$ in exactly the same way as in the proof of \Cref{prop:basic_lemma}. The only difference is the following. When showing $f'(0)>0$, one needs $\langle\chi, a^w\rangle < 0$ and $(\delta_0, a^w)\leq 0$ there; but here one has $\langle\chi, a^w\rangle \leq 0$ and $(\delta_0, a^w)< 0$, which also implies that $f'(0)>0$. Hence \eqref{eq:temp10} holds. \par 
		Finally, since $f$ is $G$-equivariant, we have $f(V^{0-})\subset W^{0-}$. Hence
		\begin{equation}
		G(v, V^{0-}(a)) \subset G(f(v),W^{0-}(a)).
		\end{equation}
		Therefore \eqref{eq:basic_lemma2} holds.
	\end{proof}

	\section{Non-divergence of the limiting distribution}\label{sect:nondivergence}
	Let $G=\mathbf{G}(\R)$ be a connected semisimple real algebraic group, and $L$ be a real Lie group containing $G$. Let $\{a(t)\}_{t\in\R^{\times}}$ be a multiplicative one-parameter subgroup of $G$ with non-trivial projection on each simple factor of $G$. Let $P=P(a)$ be the parabolic subgroup of $G$ whose real points consists of the elements $g\in G$ such that the limit $\lim_{t\to \infty}a(t)ga(t)^{-1}$ exists. Let $\phi \colon I = [a,b] \rightarrow G$ be an analytic map, and let $\pi_P \colon G \rightarrow G/P$ be the projection which maps $g$ to $g^{-1}P$. Then $\widetilde{\phi} = \pi_P\circ\phi$ is an analytic curve on $G/P$. In this section we assume that the image of $\widetilde{\phi}$ is not contained in any unstable Schubert variety of $G/P$ with respect to $a(t)$. \par
	Let $x_0 = l\Lambda \in L/\Lambda$. We will assume that the orbit of $x_0$ under $G$ is dense in $L/\Lambda$; that is $\overline{Gx_0} = L/\Lambda$. Let $t_i\to \infty$ be any sequence in $\R_{>0}$. Let $\mu_i$ be the parametric measure supported on $a(t_i)\phi(I)x_0$, that is, for any compactly supported function $f \in C_c(L/\Lambda)$ one has
	\begin{equation}\label{eq:mu_i}
	\int_{L/\Lambda}f\,d\mu_i = \frac{1}{|I|}\int_{I}f(a(t_i)\phi(s)x_0)\,\mathrm{d}s.
	\end{equation}
	
	\begin{theorem}\label{thm:non_divergence}
		Given $\epsilon > 0$ there exists a compact set $\mathcal{F} \subset L/\Lambda$ such that $\mu_i(\mathcal{F}) \geq 1-\epsilon$ for all large $i\in\N$.
	\end{theorem}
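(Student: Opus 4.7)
My strategy is to run the Kleinbock--Margulis quantitative non-divergence machinery on the curve $s \mapsto a(t_i)\phi(s)x_0$ in $L/\Lambda$, using Theorem 1.2 (linear stability) as the non-contraction input. The first step is to encode the cusps of $L/\Lambda$ by linear data. I would produce a finite-dimensional real representation $\rho\colon L \to \GL(V)$ together with a $\Lambda$-invariant discrete collection $\Sigma \subset V$ of ``cusp markers'' and a function $\Delta\colon L/\Lambda \to (0,\infty]$ of the form $\Delta(lx_0) = \inf_{v \in l\Sigma}\|v\|$ such that the sublevel sets $\{\Delta \geq r\}$ are precompact in $L/\Lambda$ (a Mahler-type criterion). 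In the arithmetic case this is classical via exterior powers; the general case is handled by a Dani--Margulis--Mozes--Shah construction using $\Lambda$-conjugates of a finite family of unipotent subgroups whose $\Lambda$-orbits are closed. Restricting $\rho$ to $G$ (and enlarging $V$ if necessary so that $V$ fits into the framework of Theorem 1.2) then puts me in a position to apply linear stability vector by vector.

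The second step is the core estimate. For each $v \in \Sigma$ and each $t > 1$, Theorem 1.2 gives $\sup_{s \in I}\|\rho(a(t)\phi(s))v\| \geq C\|v\|$ with $C > 0$ independent of $t$ and $v$. Since $\phi$ is analytic on the compact interval $I$ and $\rho$ is algebraic, each coordinate of $s \mapsto \rho(a(t)\phi(s))v$ is a real analytic function on $I$, and by the Kleinbock--Margulis lemma on $(C_0,\alpha)$-goodness of coordinates of analytic curves (with constants depending only on $\phi$ and $\rho$, not on $t$ or $v$), I would obtain
\begin{equation*}
\bigl|\{s \in I : \|\rho(a(t)\phi(s))v\| < \epsilon\}\bigr| \leq C_0 \left(\frac{\epsilon}{C\|v\|}\right)^{\alpha} |I|
\end{equation*}
for every $\epsilon > 0$.

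Finally, I would assemble these pointwise bounds into a global non-divergence estimate. For any $r > 0$, the set of $s \in I$ with $a(t)\phi(s)x_0 \in \{\Delta < r\}$ is covered by $\bigcup_{v \in \Sigma}\{s : \|\rho(a(t)\phi(s))v\| < r\}$; combined with the discreteness of $\Sigma$ in $V$ and a smallest-vector argument of Kleinbock--Margulis, this yields $\mu_i(\{\Delta < r\}) \leq C_1 r^{\alpha}$ uniformly in $i$. Choosing $r = r(\epsilon)$ sufficiently small then produces the compact set $\mathcal{F} = \{\Delta \geq r\}$ with $\mu_i(\mathcal{F}) \geq 1 - \epsilon$. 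I expect the main obstacle to be the first step when $\Lambda$ is non-arithmetic: packaging the cusp geometry of $L/\Lambda$ into a single representation $V$ of $G$ on which Theorem 1.2 directly applies, and controlling the sum over $v \in \Sigma$ uniformly, which requires a careful avoidance lemma in the spirit of the Dani--Margulis linearization of singular sets.
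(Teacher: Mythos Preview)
Your proposal is correct and uses the same two ingredients as the paper: the Kleinbock--Margulis quantitative non-divergence machinery on the representation $V=\bigoplus_i\bigwedge^i\mathfrak{l}$, and the linear stability theorem (Theorem~1.2) as the non-contraction input. The only difference is in packaging. The paper quotes the Kleinbock--Margulis dichotomy as a black box (its Theorem~3.2: either most of the curve stays in a compact set, or some lattice vector $l\gamma v_j$ is uniformly small along the whole curve) and then argues by contradiction: if the theorem fails one obtains, after passing to a subsequence and normalizing, a single limit vector $v$ with $\sup_{s}\lVert a(t_i)\phi(s)v\rVert\to 0$, which immediately contradicts Theorem~1.2. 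You instead apply Theorem~1.2 to every $v\in\Sigma$ and then feed the resulting uniform lower bounds into the $(C,\alpha)$-good measure estimate, aiming for a quantitative bound $\mu_i(\{\Delta<r\})\leq C_1 r^{\alpha}$. Your route yields a sharper conclusion but forces you to redo the covering/``smallest-vector'' step of Kleinbock--Margulis (which you correctly flag as the delicate point, especially for non-arithmetic $\Lambda$); the paper's contradiction argument sidesteps this entirely by letting the black-box dichotomy absorb the covering and invoking linear stability only once, on the limit vector.
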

	
	This theorem will be proved via linearization technique combined with \Cref{thm:linear_stability}. We follow \cite[Section 3]{Sha09Invention} closely, as most of the arguments there work not only for $G=\SL_n(\R)$ but also for general $G$.
	
	\begin{definition}\label{def:V}
		Let $\mathfrak{l}$ denote the Lie algebra of $L$, and denote $d = \mathrm{dim}\,L$. We define
		\begin{equation*}
		V = \bigoplus_{i=1}^d \bigwedge\nolimits^{\!i} \mathfrak{l},
		\end{equation*}
		and let $L$ act on $V$ via $\bigoplus_{i=1}^{d}\bigwedge\nolimits^{\!i}\mathrm{Ad}(L)$. This defines a linear representation of $L$ (and of $G$ by restriction):
		\begin{equation*}
		L \rightarrow \GL(V).
		\end{equation*}
	\end{definition}
	
	The following theorem due to Kleinbock and Margulis is the basic tool to prove that there is no escape of mass to infinity:
	
	\begin{theorem}[see \cite{Dan84}, \cite{KM98} and \cite{Sha09Duke1}]\label{thm:nondivergence_criterion}
		Fix a norm $\lVert\cdot\rVert$ on $V$. There exist finitely many vectors $v_1, v_2, \cdots, v_r \in V$ such that for each $i = 1, 2, \cdots, r$, the orbit $\Lambda v_i$ is discrete, and moreover, the following holds: for any $\epsilon > 0$ and $R > 0$, there exists a compact set $\mathcal{F} \subset L/\Lambda$ such that for any $t > 0$ and any subinterval $J \subset I$, one of the following holds:
		\begin{enumerate}[(I)]
			\item There exist $\gamma \in \Lambda$ and $j \in \{ 1, \cdots, r \}$ such that
			\begin{equation*}
			\sup_{s\in J}\lVert a(t)\phi(s)l\gamma v_j\rVert < R;
			\end{equation*}
			\item
			\begin{equation*}
			\lvert \{ s\in J \colon a(t)\phi(s)x_0 \in K \} \rvert \geq (1-\epsilon)\lvert J \rvert.
			\end{equation*}
		\end{enumerate}
	\end{theorem}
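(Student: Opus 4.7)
The plan is to combine reduction theory for $\Lambda\subset L$ with the $(C,\alpha)$-good machinery of Kleinbock--Margulis: the vectors $v_1,\ldots,v_r$ will be arithmetic objects whose shortness under $\Lambda$-translation detects excursions into the cusps of $L/\Lambda$, and the $(C,\alpha)$-good property of analytic functions will convert this into the required quantitative dichotomy.

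First I would invoke reduction theory to produce the $v_j$. Generalizing Mahler's criterion, there is a finite collection of discrete $\Lambda$-orbits $\Lambda v_1,\ldots,\Lambda v_r\subset V$ parametrizing the cuspidal directions of $L/\Lambda$, such that for every small $\eta>0$ the set
\begin{equation*}
\mathcal{K}_\eta=\{lh\Lambda\in L/\Lambda:\lVert lh\gamma v_j\rVert\geq\eta\text{ for all }\gamma\in\Lambda,\,j=1,\ldots,r\}
\end{equation*}
is compact, and $\mathcal{K}_\eta$ exhausts $L/\Lambda$ as $\eta\to 0$. In the model case $L=\SL_n(\R),\,\Lambda=\SL_n(\Z)$ the $v_j$ are the highest weight vectors in each exterior power $\bigwedge^i\R^n$ (viewed inside $V$), encoding primitive integer sublattices; the general semisimple case is Dani's extension, and the tree of rational parabolics of $L$ produces the finite list $v_1,\ldots,v_r$.

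Second, I would invoke the basic Kleinbock--Margulis estimate. For any fixed $v\in V$, the function $s\mapsto\lVert a(t)\phi(s)lv\rVert$ is a norm of a vector-valued real-analytic function on a compact neighborhood of $I$, and by \cite{KM98} (and the refinement for analytic maps) such functions are uniformly $(C,\alpha)$-good: there exist $C,\alpha>0$ depending only on the analyticity data of $\phi$ and on $\dim V$, but independent of $t$ and $v$, such that for every sub-interval $J\subset I$ and every $\eta>0$,
\begin{equation*}
\bigl|\{s\in J:\lVert a(t)\phi(s)lv\rVert<\eta\}\bigr|\leq C\bigl(\eta/\sup_{s\in J}\lVert a(t)\phi(s)lv\rVert\bigr)^{\alpha}|J|.
\end{equation*}
Uniformity in $t$ and $v$ uses that $a(t)$ acts linearly on $V$ and only rescales the weight components of a fixed finite-dimensional analytic family.

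Finally, to assemble the dichotomy: given $\epsilon,R>0$, pick $\eta>0$ with $C'(\eta/R)^{\alpha}\leq\epsilon$ (where $C'$ absorbs the multiplicity constant from the covering step below), and put $\mathcal{F}=\mathcal{K}_\eta$. Fix $t>0$ and $J\subset I$, and assume (I) fails, so $\sup_{s\in J}\lVert a(t)\phi(s)l\gamma v_j\rVert\geq R$ for every $\gamma\in\Lambda$ and every $j$. By the reduction-theoretic description, the complement of (II),
\begin{equation*}
\{s\in J:a(t)\phi(s)x_0\notin\mathcal{F}\}\subset\bigcup_{\gamma,j}\bigl\{s\in J:\lVert a(t)\phi(s)l\gamma v_j\rVert<\eta\bigr\},
\end{equation*}
is covered by countably many sets each of measure $\leq C(\eta/R)^{\alpha}|J|$. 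Although the union is infinite, discreteness of $\Lambda v_j$ plus a Besicovitch-type covering argument (as in \cite{KM98}) shows that at each $s$ only boundedly many pairs $(\gamma,j)$ contribute, so the total measure is $\leq C'(\eta/R)^{\alpha}|J|\leq\epsilon|J|$, giving (II). The main obstacle is the first step: adapting Mahler's criterion to a general lattice $\Lambda\subset L$ and producing the finite list of $\Lambda$-orbits $\Lambda v_j$ requires invoking the Dani--Margulis reduction theory in the correct form, and verifying that the bounded-multiplicity property holds for the chosen $v_j$; once this is arranged the $(C,\alpha)$-good machinery is essentially automatic.
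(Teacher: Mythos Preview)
The paper does not supply its own proof of this theorem: it is quoted from the literature (Dani, Kleinbock--Margulis, Shah), and the only commentary the paper offers is that ``the key ingredient of the proof, as explained in \cite[Section 3.2]{Sha09Invention} and \cite[Section 2.1]{Sha09Duke1}, is the $(C,\alpha)$-good property.'' Your outline is exactly this standard argument---reduction theory to produce the discrete orbits $\Lambda v_j$ detecting the cusps, plus uniform $(C,\alpha)$-goodness of the analytic family $s\mapsto a(t)\phi(s)lv$---so there is nothing to compare against.

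One comment on your sketch: the step where you pass from the union $\bigcup_{\gamma,j}\{s:\lVert a(t)\phi(s)l\gamma v_j\rVert<\eta\}$ to a bound $\leq C'(\eta/R)^\alpha|J|$ is glossed too quickly. Discreteness of $\Lambda v_j$ alone does not give a pointwise multiplicity bound; what is actually used in \cite{KM98} (and its adaptations in \cite{Sha09Duke1}) is a marking/induction scheme exploiting the poset structure of the relevant $\Lambda$-orbits (in the $\SL_n$ case, the flag of short primitive sublattices). You correctly flag this as the main obstacle, but ``Besicovitch plus discreteness'' is not a faithful description of how it is overcome---one really has to invoke the structured covering argument from the cited references rather than a generic covering lemma.
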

	
	The key ingredient of the proof, as explained in \cite[Section 3.2]{Sha09Invention} and \cite[Section 2.1]{Sha09Duke1}, is the following growth property called the $(C, \alpha)$-good property, which is due to \cite[Proposition 3.4]{KM98}. Following Kleinbock and Margulis, we say that a function $f\colon I \rightarrow \R$ is $(C, \alpha)$-good if for any subinterval $J \subset I$ and any $\epsilon > 0$, the following holds:
	\begin{equation*}
	\lvert \{ s\in J\colon \lvert f(s) \rvert < \epsilon \} \rvert 
	\leq C\left(\frac{\epsilon}{\sup_{s\in J}\lvert f(s) \rvert}\right)^{\alpha}\lvert J \rvert.
	\end{equation*} \\
	
	Now we are ready to prove the main result of this section.
	
	\begin{proof}[Proof of \Cref{thm:non_divergence}]
		Take any $\epsilon > 0$. Take a sequence $R_k \to 0$ as $k \to \infty$. For each $k \in \N$, let $\mathcal{F}_k \subset L/\Lambda$ be a compact set as determined by \Cref{thm:nondivergence_criterion} for these $\epsilon$ and $R_k$. If the theorem fails to hold, then for each $k \in \N$ we have $\mu_i(\mathcal{F}_k) > 1 - \epsilon$ for infinitely many $i \in \N$. Therefore after passing to a subsequence of $\{ \mu_i \}$, we may assume that $\mu_i(\mathcal{F}_i) < 1-\epsilon$ for all $i$. Then by \Cref{thm:nondivergence_criterion}, after passing to a subsequence, we may assume that there exists $v_0$ and $\gamma_i\in\Lambda$ such that
		\begin{equation*}
		\sup_{s\in I} \lVert a(t_i)\phi(s)l\gamma_iv_0 \rVert \leq R_i\stackrel{i\to\infty}{\longrightarrow} 0.
		\end{equation*}
		Since $\Lambda\cdot v_0$ is discrete, there exists $r_0 > 0$ such that $\lVert l\gamma_iv_0 \rVert \geq r_0$ for each i. We put $v_i = l\gamma_iv_0/\lVert l\gamma_iv_0 \rVert$. Then $v_i \rightarrow v \in V$ and $\lVert v \rVert = 1$. Therefore
		\begin{equation}
		\sup_{s\in I} \lVert a(t_i)\phi(s)v_i \rVert \leq R_i/r_0\stackrel{i\to\infty}{\longrightarrow} 0.
		\end{equation}
		Then it follows that
		\begin{equation}\label{eq:uniform_shrinking}
		\sup_{s\in I} \lVert a(t_i)\phi(s)v \rVert \stackrel{i\to\infty}{\longrightarrow} 0.
		\end{equation}
		This contradict \Cref{thm:linear_stability}.
	\end{proof}
	
	As a consequence of \Cref{thm:non_divergence}, we deduce the following:
	\begin{corollary}\label{cor:weak_limit_exists}
		After passing to a subsequence, $\mu_i\to\mu$ in the space of probability measures on $L/\Lambda$ with respect to the weak-* topology.
	\end{corollary}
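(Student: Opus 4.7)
The statement is a standard tightness-plus-compactness consequence of the non-divergence result already established in Theorem~3.1, so the plan is short. First I would observe that each $\mu_i$ defined in \eqref{eq:mu_i} is a Borel probability measure on $L/\Lambda$: it is the pushforward of the normalized Lebesgue measure on $I$ under the continuous map $s \mapsto a(t_i)\phi(s)x_0$, hence has total mass $1$.

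Next, I would invoke Theorem~\ref{thm:non_divergence} to deduce that the family $\{\mu_i\}$ is tight: for any $\epsilon > 0$ there is a compact set $\mathcal{F} \subset L/\Lambda$ with $\mu_i(\mathcal{F}) \geq 1 - \epsilon$ for all sufficiently large $i$. Tightness on the locally compact, second countable Hausdorff space $L/\Lambda$ (or, equivalently, on the Polish completion) puts the sequence inside a relatively sequentially compact subset of the space of Borel probability measures equipped with the weak-$\ast$ topology, by Prokhorov's theorem. Hence there exists a subsequence $\{\mu_{i_k}\}$ converging in the weak-$\ast$ topology to some Borel probability measure $\mu$.

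The only point worth checking carefully, which I would mention explicitly, is that no mass escapes in the limit, i.e.\ the limit $\mu$ really is a probability measure and not merely a sub-probability measure. This is immediate from the tightness bound: for each $\epsilon > 0$ the compact set $\mathcal{F}$ produced above satisfies $\mu(\mathcal{F}) \geq \limsup_k \mu_{i_k}(\mathcal{F}) \geq 1 - \epsilon$ (using that $\mathcal{F}$ is closed and weak-$\ast$ limits are upper semicontinuous on closed sets), so $\mu(L/\Lambda) \geq 1 - \epsilon$ for every $\epsilon > 0$, forcing $\mu(L/\Lambda) = 1$. Alternatively one can simply quote Prokhorov directly, since tightness is the defining hypothesis there.

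No serious obstacle is expected; the entire content of the corollary is that Theorem~\ref{thm:non_divergence} (which is the deep input, and itself rests on the linear stability Theorem~\ref{thm:linear_stability}) prevents escape of mass, so that the Banach--Alaoglu/Prokhorov extraction of a convergent subsequence produces a genuine probability measure rather than a defective one. The subsequent sections then study the properties (in particular the $L$-invariance) of this limit $\mu$.
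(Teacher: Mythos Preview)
Your proposal is correct and matches the paper's approach exactly: the paper states this corollary as an immediate consequence of Theorem~\ref{thm:non_divergence} without writing out any details, and the standard tightness-plus-Prokhorov argument you have outlined is precisely what is being implicitly invoked. If anything, you have supplied more justification than the paper does.
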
 \par 
	We note that \Cref{thm:non_divergence_introduction} follows from \Cref{thm:non_divergence}.

	\section{Invariance under a unipotent flow}\label{sect:unipotent_invariance}
	Let $G = \mathbf{G}(\R)$ be a connected semisimple real algebraic group, and $\{a(t)\}_{t\in\R^{\times}}$ be a multiplicative one-parameter subgroup of $G$ with non-trivial projection on each simple factor of $G$. Define
	\begin{equation}
	P = \{g\in G \colon \lim_{t\to\infty}a(t)ga(t)^{-1}\text{ exists}\}.
	\end{equation}
	Let $\mathfrak{X}$ be a locally compact second countable Hausdorff topological space, with a continuous $G$-action. Let $\phi\colon I = [a, b]\rightarrow G$ be an analytic curve, whose projection under $g\mapsto g^{-1}P$ on $G/P$ is non-trivial. Let $\mathfrak{g}$ denote the Lie algebra of $G$. \par 
	Since the exponential map $\exp\colon\mathfrak{g}\rightarrow G$ is a local homeomorphism, we can take a sufficiently small $\eta>0$ such that for any $s\in I$ and $0<\xi<\eta$, there exists $\Psi(s,\xi)$ in $\mathfrak{g}$ such that
	\begin{equation}\label{eq:def_Psi}
	\phi(s+\xi)\phi(s)^{-1}= \exp \Psi(s,\xi).
	\end{equation}
	Moreover, $\Psi$ is an analytic map in both $s$ and $\xi$.
	
	\begin{lemma}\label{lem:converge_to_one_direction}
		There exists $m>0$ and a nilpotent element $Y_s$ in $\mathfrak{g}$ such that for all but finitely many $s\in I$,
		\begin{equation}\label{eq:converge_to_one_direction}
		\mathrm{Ad}\,a(t)\,\Psi(s,t^{-m})\rightarrow Y_s,\quad t\to\infty.
		\end{equation}
		Moreover, one can assume that the map $s\rightarrow Y_s$ is analytic, and the convergence is uniform in $s$.
	\end{lemma}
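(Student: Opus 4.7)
The plan is to perform a weighted Taylor expansion of $\Psi(s,\xi)$ in $\xi$ using the weight-space decomposition of $\mathfrak{g}$ under $\mathrm{Ad}\,a(t)$, and then pick the exponent $m$ precisely so that substituting $\xi = t^{-m}$ brings the competing growth and decay rates into balance, leaving exactly a finite nonzero sum in the strictly positive-weight part (which is nilpotent).

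Concretely, I would first write $\mathfrak{g} = \bigoplus_{\alpha\in\Z}\mathfrak{g}_\alpha$ with $\mathrm{Ad}(a(t))|_{\mathfrak{g}_\alpha} = t^\alpha$, and decompose $\Psi(s,\xi) = \sum_\alpha \Psi_\alpha(s,\xi)$ accordingly. Since $\Psi(s,0)=0$ and $\Psi$ is analytic, each component admits a convergent expansion
\[
\Psi_\alpha(s,\xi) = \sum_{k\geq 1}\xi^k\,C_{\alpha,k}(s),
\]
where each $C_{\alpha,k}\colon I \to \mathfrak{g}_\alpha$ is analytic. Let $n_\alpha$ be the smallest $k$ for which $C_{\alpha,k}\not\equiv 0$ on $I$ (set $n_\alpha = \infty$ if no such $k$ exists). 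The nontriviality of $\widetilde{\phi}$ forces $\Psi$ to have a component in some $\mathfrak{g}_\alpha$ with $\alpha > 0$; otherwise $\Psi(s,\xi)\in\mathrm{Lie}(P)$ for all $(s,\xi)$, which would make $\phi(s+\xi)\phi(s)^{-1}\in P$ identically and hence $\widetilde\phi$ constant. So the set $\{\alpha/n_\alpha : \alpha>0,\,n_\alpha<\infty\}$ is a nonempty finite set of positive rationals, and I take $m$ to be its maximum.

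With this choice, substituting $\xi = t^{-m}$ gives
\[
\mathrm{Ad}\,a(t)\,\Psi(s,t^{-m}) = \sum_{\alpha}\sum_{k\geq n_\alpha} t^{\alpha - km}\, C_{\alpha,k}(s).
\]
For $\alpha\leq 0$ every exponent $\alpha - km$ is strictly negative. For $\alpha>0$ the defining property of $m$ gives $\alpha - km \leq \alpha - n_\alpha m \leq 0$, with equality if and only if $k = n_\alpha$ and $\alpha = m\,n_\alpha$. Consequently the only terms that survive as $t\to\infty$ are the resonant ones, yielding
\[
Y_s := \sum_{\substack{\alpha > 0 \\ \alpha \,=\, m\,n_\alpha}} C_{\alpha,n_\alpha}(s) \,\in\, \bigoplus_{\alpha > 0}\mathfrak{g}_\alpha.
\]
Since the positive-weight part is the Lie algebra of the unipotent radical of the opposite parabolic $P^-$, the element $Y_s$ is nilpotent; being a finite sum of analytic functions on $I$, the map $s\mapsto Y_s$ is analytic.

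For the uniformity, compactness of $I$ bounds each $C_{\alpha,k}$ uniformly, and analyticity of $\Psi$ on a complex polydisc around $I\times\{0\}$ yields a uniform Cauchy tail estimate $\sum_{k>K}|\xi|^k\|C_{\alpha,k}(s)\|\ll |\xi|^{K+1}$; combined with the finitely many non-resonant terms carrying factors $t^{\alpha-km}<1$, this gives an error bound that vanishes uniformly in $s\in I$. Finally, $Y_s$ is a nonzero analytic function on the compact interval $I$ (at least one resonant summand was nonzero by the choice of $m$), so its zero set is finite — this accounts for the ``all but finitely many'' clause, since at the isolated zeros the limit is merely $0$ rather than a genuine nontrivial nilpotent direction. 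The main obstacle is identifying the correct value of $m$: choosing $m$ smaller than $\max_\alpha \alpha/n_\alpha$ lets some positive-weight term diverge, while choosing $m$ larger makes the limit identically zero, so the maximum is the unique critical rate producing a nontrivial nilpotent limit — this is where the usefulness of the lemma comes from, since $Y_s$ will later generate a unipotent subgroup under which the limit measure is invariant.
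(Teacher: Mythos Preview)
Your proof is correct and follows essentially the same approach as the paper. The only cosmetic difference is the order of the two decompositions: the paper first expands $\Psi(s,\xi)=\sum_{i\ge 1}\xi^i\psi_i(s)$ in $\xi$ and then splits each $\psi_i$ into weight components $\psi_{i,j}$, setting $m=\max_i m_i/i$ with $m_i$ the top weight of $\psi_i$, whereas you first project to weight spaces and then Taylor-expand, setting $m=\max_{\alpha>0}\alpha/n_\alpha$; both are computing $\max\{j/i:\psi_{i,j}\not\equiv 0\}$ over the same array of coefficients, and the resulting $Y_s$ and the justification of nilpotency, analyticity, and uniform convergence are the same.
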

	
	\begin{proof}
		Since $\Psi$ is an analytic map in both $s$ and $\xi$, we can write
		\begin{equation}\label{eq:temp11}
		\Psi(s,\xi) = \sum_{i=1}^{\infty}\xi^i\psi_i(s),
		\end{equation}
		where $\psi_i\colon I\rightarrow\mathfrak{g}$ is analytic for each $i$. \par 
		Notice that $\mathrm{Ad}\,a(t)$ is semisimple and acts on the finite dimensional vector space $\mathfrak{g}$, then for each $i$ there exist $m_i\in\Z$ such that
		\begin{equation}\label{eq:temp12}
		\mathrm{Ad}\,a(t)\psi_i(s) = \sum_{j\leq m_i}t^j \psi_{i,j}(s),
		\end{equation}
		where $\psi_{i,j}(s)$ is analytic in $s$, and $\psi_{i,m_i}(s)\neq 0$ for all but finitely many $s\in I$. Since the projection of $\phi$ on $G/P$ is non-trivial, there exists $i$ such that $m_i>0$.\par 
		Combining \eqref{eq:temp11}\eqref{eq:temp12}, we get
		\begin{equation}\label{eq:temp13}
		\mathrm{Ad}\,a(t)\,	\Psi(s,\xi) = \sum_{i=0}^{\infty}\sum_{j\leq m_i}t^j\xi^i\psi_{i,j}(s).
		\end{equation} \par 
		Now set $m=\max_{i\geq 1} \{m_i/i\}$. Since $m_i$ are all eigenvalues of $\mathrm{Ad}\,a(t)$, they are uniformly bounded from above. Hence we know that $m$ exists and $m>0$. Denote $I=\{i\geq 1\colon m_i/i = m \}$, and we see that $I$ is a finite set. We set
		\begin{equation}
		Y_s = \sum_{i\in I}\psi_{i,m_i}(s).
		\end{equation}
		Since the eigenvalues of $\mathrm{Ad}\,a(t)$ acting on $Y_s$ are all positive, $Y_s$ is nilpotent. \par 
		In view of \eqref{eq:temp13},
		\begin{equation}
		\mathrm{Ad}\,a(t)\,	\Psi(s,t^{-m}) = Y_s + \sum_{j-im<0}t^{j-im}\psi_{i,j}(s),
		\end{equation}
		and \eqref{eq:converge_to_one_direction} follows.
	\end{proof}
	
	We could then twist $Y_s$ into one direction due to the following lemma.
	
	\begin{lemma}
		There are only finitely many $G$-conjugacy classes of the nilpotent elements in the Lie algebra $\mathfrak{g}$ of $G$.
	\end{lemma}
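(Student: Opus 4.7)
The plan is to reduce to the complex case and then invoke the Dynkin--Kostant classification of nilpotent orbits. Let $\mathfrak{g}_{\mathbb{C}} = \mathfrak{g}\otimes_{\mathbb{R}}\mathbb{C}$, equipped with the adjoint action of the complex algebraic group $\mathbf{G}(\mathbb{C})$. Two real nilpotent elements in the same $G$-orbit are \emph{a fortiori} in the same $\mathbf{G}(\mathbb{C})$-orbit, so there is a natural map from $G$-orbits of nilpotents in $\mathfrak{g}$ to $\mathbf{G}(\mathbb{C})$-orbits of nilpotents in $\mathfrak{g}_{\mathbb{C}}$. The fibers of this map are finite: a standard Galois-cohomology argument (using that $H^{1}(\mathrm{Gal}(\mathbb{C}/\mathbb{R}),\,Z_{\mathbf{G}(\mathbb{C})}(X))$ is finite for the reductive centralizer of $X$, by Borel--Serre) shows that each complex nilpotent orbit meets $\mathfrak{g}$ in only finitely many real $G$-orbits. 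It therefore suffices to establish finiteness over $\mathbb{C}$.

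For the complex case I would apply the Jacobson--Morozov theorem: every nonzero nilpotent $X\in\mathfrak{g}_{\mathbb{C}}$ fits into an $\mathfrak{sl}_{2}$-triple $(X,H,Y)$ with $[H,X]=2X$, $[H,Y]=-2Y$, $[X,Y]=H$. By Kostant's theorem, any two $\mathfrak{sl}_{2}$-triples sharing the nilpositive element $X$ are conjugate by an element of the centralizer $Z_{\mathbf{G}(\mathbb{C})}(X)$. Hence the $\mathbf{G}(\mathbb{C})$-orbit of $X$ is in bijection with the $\mathbf{G}(\mathbb{C})$-orbit of the triple, which in turn is determined by the orbit of the semisimple element $H$. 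Conjugating further, one may place $H$ inside a fixed Cartan subalgebra $\mathfrak{h}\subset\mathfrak{g}_{\mathbb{C}}$ and inside its dominant chamber with respect to a chosen base of simple roots.

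The crux is then Dynkin's observation that for such a dominant $H$ coming from an $\mathfrak{sl}_{2}$-triple, the value $\alpha(H)$ on each simple root $\alpha$ belongs to $\{0,1,2\}$. This follows from the $\mathfrak{sl}_{2}$-representation theory applied to $\mathfrak{g}_{\mathbb{C}}$ viewed as a module for $\mathrm{span}\{X,H,Y\}$: all $\mathrm{ad}(H)$-eigenvalues are integers, and an analysis of root strings rules out values larger than $2$ on simple roots. Consequently every nilpotent orbit is encoded by a labeling of the nodes of the Dynkin diagram by $\{0,1,2\}$, giving at most $3^{\mathrm{rank}\,G}$ possibilities; together with the zero orbit and the reduction from the first paragraph, this bounds the number of $G$-orbits in $\mathfrak{g}$. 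I expect the Dynkin bound to be the main technical obstacle, but it is a classical result, and for the needs of the present paper one may alternatively invoke the finiteness statement directly from a standard reference such as Collingwood--McGovern's \emph{Nilpotent Orbits in Semisimple Lie Algebras}.
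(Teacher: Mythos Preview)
Your argument follows the same two-step route as the paper: first reduce from $\mathbb{R}$ to $\mathbb{C}$ by showing that each complex nilpotent orbit meets $\mathfrak{g}$ in finitely many real $G$-orbits via finiteness of $H^{1}(\mathrm{Gal}(\mathbb{C}/\mathbb{R}),\mathbf{H}(\mathbb{C}))$, and then invoke finiteness over $\mathbb{C}$. The paper simply cites Richardson for the complex case, whereas you sketch the Jacobson--Morozov/Dynkin weighted-diagram argument; both are standard and either is acceptable.

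One small correction to your parenthetical: the centralizer $Z_{\mathbf{G}}(X)$ of a nonzero nilpotent $X$ is in general \emph{not} reductive---it has a nontrivial unipotent radical (the reductive part is the centralizer of the full $\mathfrak{sl}_{2}$-triple, not of $X$ alone). This does not break your argument, since finiteness of $H^{1}(\mathrm{Gal}(\mathbb{C}/\mathbb{R}),\mathbf{H}(\mathbb{C}))$ holds for an arbitrary linear algebraic group $\mathbf{H}$ over $\mathbb{R}$ (this is the result the paper invokes from Platonov--Rapinchuk), and in any case the unipotent radical has trivial $H^{1}$ so one may pass to the reductive quotient. Just adjust the justification accordingly.
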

	
	\begin{proof}
		This result has been proved for groups over the complex numbers $\C$ (see \cite{Ric67}). Let $X$ be any non-zero nilpotent element in $\mathfrak{g}$. Now it remains to show that there are only finitely many $\mathbf{G}(\R)$-orbits in the real points of  $\mathbf{G}(\C)\cdot X$. Let $\mathbf{H}$ be the stabilizer of $X$ in $\mathbf{G}$. Then $\mathbf{H}$ is an algebraic group defined over $\R$. It is well known that the $\mathbf{G}(\R)$-orbits in $(\mathbf{G}/\mathbf{H})(\R)$ are parametrized by the Galois cohomology $H^1(\mathrm{Gal}(\C/\R),\mathbf{H}(\C))$. Then the statement of the lemma follows from the finiteness of $H^1(\mathrm{Gal}(\C/\R),\mathbf{H}(\C))$, which is guaranteed by \cite[Theorem 6.14]{PR94}.
	\end{proof}
	
	Since there are only finitely many conjugacy classes of nilpotent elements in $\mathfrak{g}$, up to at most finitely many points we may assume that all the $Y_s$ are in the same conjugacy class. Hence there exists $w_0$ in $\mathfrak{g}$, and $\delta(s)$ in $G$ which is also analytic in $s$, such that for all but finitely many $s\in I$ one has
	\begin{equation}
	\mathrm{Ad}(\delta(s))\cdot Y_s = w_0.
	\end{equation}
	Define the unipotent one-parameter subgroup of $G$ as
	\begin{equation}
	W = \{\exp(tw_0)\colon t\in\R\}.
	\end{equation}
	\par 
	Let $(t_i)_{i\in\N}$ be a sequence in $\R$ such that $t_i\to\infty$ as $i\to\infty$. Let $x_i\to x$ a convergent sequence in $\mathfrak{X}$. For each $i\in\N$, let $\lambda_i$ be the probability measure on $\mathfrak{X}$ such that
	\begin{equation}\label{eq:def_lambda_i}
	\int_{\mathfrak{X}} f\,\mathrm{d}\lambda_i = \frac{1}{\lvert I \rvert}\int_{s\in I}f(\delta(s) a(t_i)\phi(s)x_i)\,\mathrm{d}s,\quad\forall f\in \mathrm{C}_c(\mathfrak{X}).
	\end{equation}
	
	The following theorem is the main result of this section. The new idea here due to Nimish Shah is that we can actually twist the curve after translating by $a(t)$.
	
	\begin{theorem}\label{thm:unipotent_invariance}
		Suppose that $\lambda_i\to\lambda$ in the space of finite measures on $\mathfrak{X}$ with respect to the weak-* topology, then $\lambda$ is invariant under $W$.
	\end{theorem}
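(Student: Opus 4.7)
The plan is to prove $W$-invariance by showing, for every $f \in C_c(\mathfrak{X})$ and every $\eta\in\mathbb{R}$, that $\int f(u(\eta)x)\,d\lambda(x) = \int f(x)\,d\lambda(x)$, where I write $u(\eta) := \exp(\eta w_0)$. The strategy is a Shah-style twisting: compare $\lambda_i(f\circ u(\eta))$ with $\lambda_i(f)$ for large $i$ by means of a change of variable $s \mapsto s + \xi_i$ on $\lambda_i$, with a judiciously chosen shift $\xi_i \to 0$. The algebraic identity making this work, which follows directly from \eqref{eq:def_Psi}, is
\[
\delta(s+\xi)\,a(t)\,\phi(s+\xi) \;=\; \bigl[\delta(s+\xi)\,\exp\!\bigl(\mathrm{Ad}(a(t))\,\Psi(s,\xi)\bigr)\,\delta(s)^{-1}\bigr]\cdot \delta(s)\,a(t)\,\phi(s),
\]
which expresses the shifted trajectory as a left translate of the unshifted one by a conjugating factor that I will arrange to converge to $u(\eta)$.

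First I would take $\xi_i = t_i^{-m}$. By \Cref{lem:converge_to_one_direction} the term $\mathrm{Ad}(a(t_i))\,\Psi(s,\xi_i)$ converges to $Y_s$ uniformly in $s\in I$; together with $\mathrm{Ad}(\delta(s))Y_s = w_0$, the analyticity of $\delta(\cdot)$, and $x_i \to x$, the bracketed factor above converges uniformly in $s$ to $\exp(w_0) = u(1)$. Uniform continuity of $f$ on compact sets then forces the pointwise difference $\bigl| f(\delta(s+\xi_i)a(t_i)\phi(s+\xi_i)x_i) - f(u(1)\,\delta(s)a(t_i)\phi(s)x_i) \bigr|$ to tend to $0$ uniformly in $s$. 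Integrating over $I$ and applying the substitution $s \mapsto s-\xi_i$ on one side, whose boundary mismatch contributes at most $2\|f\|_\infty |\xi_i| = o(1)$, I conclude $\lambda_i(f) - \lambda_i(f\circ u(1)) \to 0$, hence $\lambda(f) = \lambda(f\circ u(1))$ in the weak-$*$ limit.

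To upgrade from invariance under the single element $u(1)$ to invariance under all of $W$, the same argument runs with $\xi_i = c\,t_i^{-m}$ for any $c \in \R$. Re-examining the computation in the proof of \Cref{lem:converge_to_one_direction}, the corresponding limit becomes $\sum_{i\in I_{*}} c^{i}\,\psi_{i,m_i}(s)$ where $I_{*} = \{i \geq 1 : m_i/i = m\}$; when $I_{*}$ is a singleton $\{i_0\}$ this is exactly $c^{i_0}Y_s$, giving invariance under $u(c^{i_0})$ for all $c \in \R$ and hence under all of $W$. In the general case the argument yields invariance along a polynomial curve in the one-parameter group $W$, which combined with closedness of the stabilizer of $\lambda$ in $L$ fills out all of $W$. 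The main technical obstacle is maintaining uniform-in-$s$ control of the conjugating factor and the negligibility of boundary effects of the change of variables; both reduce cleanly to the uniform convergence asserted in \Cref{lem:converge_to_one_direction} together with the analyticity of $\delta(\cdot)$ and uniform continuity of $f$.
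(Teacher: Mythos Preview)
Your proposal is correct and follows the same twisting argument as the paper. The only cosmetic differences are that you carry $\delta(s+\xi)$ through the key identity where the paper uses $\delta(s)$ (both work, since $\delta$ is continuous and $\xi_i\to 0$), and that you spell out the scaling $\xi_i=c\,t_i^{-m}$ explicitly where the paper simply writes ``replace $w_0$ with any scalar multiple of $w_0$''.

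One point deserves comment. Your assertion that, when $|I_*|>1$, the argument yields ``invariance along a polynomial curve in the one-parameter group $W$'' is not right as stated: the limit $\mathrm{Ad}(\delta(s))\sum_{i\in I_*}c^{i}\psi_{i,m_i}(s)$ need neither lie in $\R w_0$ nor be independent of $s$, so it cannot be pulled outside the $ds$-integral as a single group element acting on $\lambda_i$. Fortunately this case never arises. The cocycle identity $\exp\Psi(s,2\xi)=\exp\Psi(s+\xi,\xi)\exp\Psi(s,\xi)$, after conjugation by $a(t)$ with $\xi=t^{-m}$ and passage to the limit, gives $\exp\bigl(\sum_{i\in I_*}2^{i}\psi_{i,m_i}(s)\bigr)=\exp(Y_s)^2=\exp(2Y_s)$; since the summands $\psi_{i,m_i}(s)$ lie in distinct $\mathrm{Ad}\,a$-eigenspaces (the eigenvalues $m_i=im$ being distinct for $i\in I_*$) and $\exp$ is injective on the nilpotent subalgebra $\mathfrak{g}^+$, this forces $2^i=2$ for every $i\in I_*$, i.e.\ $I_*=\{1\}$. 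So your singleton-case argument already covers everything, and the paper's one-line justification is tacitly relying on the same fact.
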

	
	\begin{proof}
		Given $f\in\mathrm{C}_c(\mathfrak{X})$ and $\epsilon > 0$. Since $f$ is uniformly continuous, there exists a neighborhood $\Omega$ of the neutral element in $G$ such that
		\begin{equation}\label{eq:uniform_continuity}
		\lvert f(\omega y) - f(y) \rvert < \epsilon,\quad\forall \omega\in\Omega,\,\forall y\in \mathfrak{X}.
		\end{equation}
		Define
		\begin{equation}
		\Omega' = \bigcap_{s\in I} \delta(s)^{-1}\Omega\delta(s),
		\end{equation}
		and $\Omega'$ is non-empty and open because $\{\delta(s)\}_{s\in I}$ is compact.\par
		By \Cref{lem:converge_to_one_direction}, there exists $T>0$ such that for all $t>T$ and for all but finitely many $s\in I$, there exists $\omega_{t,s}\in\Omega'$ such that
		\begin{equation}
		a(t)\exp\Psi(s,t^{-m})a(t)^{-1} = \omega_{t,s} \exp Y_s.
		\end{equation}
		Take $\xi_i=t_i^{-m}$. In view of \eqref{eq:def_Psi}, for $i$ large enough we have
		\begin{equation}
		\phi(s+\xi_i) = \exp\Psi(s,\xi_i)\phi(s).
		\end{equation}
		Hence there exists $i_0\in\N$ such that for all $i>i_0$,
		\begin{equation}
		\begin{split}
		\delta(s)a(t_i)\phi(s+\xi_i) &= \delta(s)a(t_i)\exp\Psi(s,\xi_i)\phi(s)\\
		&= \delta(s)\omega_{t_i,s}\exp Y_sa(t_i)\phi(s)\\
		&=\left(\delta(s)\omega_{t_i,s}\delta(s)^{-1}\right)\delta(s)\exp Y_sa(t_i)\phi(s)\\
		&=\left(\delta(s)\omega_{t_i,s}\delta(s)^{-1}\right)(\exp w_0)\delta(s)a(t_i)\phi(s)\\
		&\in \Omega(\exp w_0)\delta(s)a(t_i)\phi(s).
		\end{split}
		\end{equation}
		By \eqref{eq:uniform_continuity} we know that for all but finitely many $s\in I$,
		\begin{equation}
		\lvert f((\exp w_0)\delta(s)a(t_i)\phi(s)x_i) - f(\delta(s)a(t_i)\phi(s+\xi_i)x_i) \rvert < \epsilon.
		\end{equation}
		It follows that for all $i>i_0$,
		\begin{equation}
		\left\vert \frac{1}{\lvert I \rvert}\int_{I}f((\exp w_0)\delta(s)a(t_i)\phi(s)x_i)\,\mathrm{d}s - \frac{1}{\lvert I \rvert}\int_{I}f(\delta(s)a(t_i)\phi(s+\xi_i)x_i)\,\mathrm{d}s \right\vert < \epsilon.
		\end{equation}
		On the other hand, since $f$ is bounded on $\mathfrak{X}$, there exists $i_1\in\N$ such that for all $i > i_1$,
		\begin{equation}
		\left\vert\frac{1}{\lvert I \rvert}\int_{I}f(\delta(s)a(t_i)\phi(s+\xi_i)x_i)\,\mathrm{d}s - \frac{1}{\lvert I \rvert}\int_{I}f(\delta(s)a(t_i)\phi(s)x_i)\,\mathrm{d}s \right\vert < \epsilon.
		\end{equation}
		Combining the above two equations we get
		\begin{equation}
		\left\vert \frac{1}{\lvert I \rvert}\int_{I}f((\exp w_0)\delta(s)a(t_i)\phi(s)x_i)\,\mathrm{d}s - \frac{1}{\lvert I \rvert}\int_{I}f(\delta(s)a(t_i)\phi(s)x_i)\,\mathrm{d}s \right\vert < 2\epsilon.
		\end{equation}
		Therefore, for $i$ large enough we have
		\begin{equation}
		\left\vert\int_{\mathfrak{X}} f((\exp w_0)\cdot x)\,\mathrm{d}\lambda_i - \int_{\mathfrak{X}} f(x)\,\mathrm{d}\lambda_i \right\vert < 2\epsilon.
		\end{equation}
		Taking $i\to\infty$,
		\begin{equation}
		\left\vert\int_{\mathfrak{X}} f((\exp w_0)\cdot x)\,\mathrm{d}\lambda - \int_{\mathfrak{X}} f(x)\,\mathrm{d}\lambda \right\vert \leq 2\epsilon.
		\end{equation}
		Since $\epsilon$ is arbitrary, we conclude that $\lambda$ is $\exp w_0$-invariant. \par 
		If we replace $w_0$ with any scalar multiple of $w_0$, the above arguments still work. Hence $\lambda$ is invariant under $W = \{\exp(tw_0)\colon t\in\R\}$.
	\end{proof}

	\section{Dynamical behavior of translated trajectories near singular sets}\label{sect:focusing}
	Let notations be as in \Cref{sect:nondivergence}. Recall that the image of $\widetilde{\phi}$ is not contained in any unstable Schubert varieties of $G/P$ with respect to $a(t)$. Let $\{\lambda_i:i\in\N\}$ be the sequence of probability measures on $L/\Lambda$ as define in \eqref{eq:def_lambda_i}, where we take $\mathfrak{X} = L/\Lambda$ and $x_i=x_0$. Due to \Cref{thm:non_divergence}, by passing to a subsequence we assume that $\lambda_i\to\lambda$ as $i\to\infty$, where $\lambda$ is a probability measure on $L/\Lambda$. By \Cref{thm:unipotent_invariance}, $\lambda$ is invariant under a unipotent subgroup $W$. We would like to describe the limit measure $\lambda$ using the description of ergodic invariant measures for unipotent flows on homogeneous spaces due to Ratner \cite{Rat91}. We follow the treatment in \cite[Section 4]{Sha09Duke1}. \par 
	
	\subsection{Ratner's theorem and linearization technique}
	Let $\pi \colon L \rightarrow L/\Lambda$ denote the natural quotient map. Let $\mathcal{H}$ denote the collection of closed connected subgroups $H$ of $L$ such that $H\cap\Lambda$ is a lattice in $H$, and suppose that a unique unipotent one-parameter subgroup of $H$ acts ergodically with respect to the $H$-invariant probability measure on $H/H\cap\Lambda$. Then $\mathcal{H}$ is a countable collection (see \cite{Rat91}).\par 
	For a closed connected subgroup $H$ of $L$, define
	\begin{equation}
	N(H,W) = \{g\in L\colon g^{-1}Wg \subset H \}.
	\end{equation}
	Now, suppose that $H\in\mathcal{H}$. We define the associated singular set
	\begin{equation}
	S(H,W) = \bigcup_{\stackrel {F\in\mathcal{H}}{F\subsetneq H}}N(F,W).
	\end{equation}
	Note that $N(H,W)N_L(H)=N(H,W)$. By \cite[Proposition 2.1, Lemma 2.4]{MS95},
	\begin{equation}
	N(H,W)\cap N(H,W)\gamma \subset S(H,W), \; \forall \gamma\in\Lambda\backslash N_G(H).
	\end{equation}
	By Ratner's theorem \cite[Theorem 1]{Rat91}, as explained in \cite[Theorem 2.2]{MS95}, we have the following.
	\begin{theorem}[Ratner]
		Given the $W$-invariant probability measure $\lambda$ on $L/\Lambda$, there exists $H\in\mathcal{H}$ such that
		\begin{equation}\label{eq:positive_limit_measure_on_singular_set}
		\lambda(\pi(N(H,W)))>0 \quad \text{and} \quad \lambda(\pi(S(H,W)))=0.
		\end{equation}
		Moreover, almost every $W$-ergodic component of $\lambda$ on $\pi(N(H,W))$ is a measure of the form $g\mu_H$, where $g\in N(H,W)\backslash S(H,W)$ and $\mu_H$ is a finite $H$-invariant measure on $\pi(H)\cong H/H\cap\Lambda$. In particular if $H$ is a normal subgroup of $L$ then $\lambda$ is $H$-invariant.
	\end{theorem}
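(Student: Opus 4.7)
The plan is to obtain the theorem as a direct consequence of Ratner's measure classification theorem for unipotent flows \cite{Rat91}, combined with the standard countability argument of Mozes--Shah \cite{MS95}. First I would invoke Ratner's theorem, which asserts that every $W$-invariant ergodic probability measure on $L/\Lambda$ is homogeneous: of the form $g\mu_F$ with $F\in\mathcal{H}$ and $g\in N(F,W)$, where $\mu_F$ is the $F$-invariant probability measure on $\pi(F)\cong F/F\cap\Lambda$. Decomposing $\lambda$ into its $W$-ergodic components then gives a measurable assignment of a pair $(F,g)$ to almost every ergodic component.

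Next, using that $\mathcal{H}$ is countable, the sets $\{\pi(N(F,W))\}_{F\in\mathcal{H}}$ form a countable cover of the union of supports of all homogeneous $W$-invariant ergodic probability measures, and hence of $\lambda$-almost every point. Consequently there exists $H\in\mathcal{H}$ with $\lambda(\pi(N(H,W)))>0$. Among all such $H$, I would choose one of minimal dimension. To verify the second half of \eqref{eq:positive_limit_measure_on_singular_set}, note that
\begin{equation*}
\pi(S(H,W))=\bigcup_{F\in\mathcal{H},\,F\subsetneq H}\pi(N(F,W)),
\end{equation*}
so if $\lambda(\pi(S(H,W)))>0$ then by countability some proper $F\subsetneq H$ in $\mathcal{H}$ satisfies $\lambda(\pi(N(F,W)))>0$, contradicting the minimality of $\dim H$.

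For the ergodic decomposition statement, I would restrict $\lambda$ to $\pi(N(H,W))\setminus\pi(S(H,W))$, on which by the previous paragraph $\lambda$ is fully supported after removing a null set. Any ergodic component $g\mu_F$ of this restriction satisfies $g\in N(F,W)$ with $F\subset H$; if $F\subsetneq H$ then $g\in S(H,W)$, contradicting the choice of the domain, so $F=H$. Thus almost every ergodic component has the form $g\mu_H$ with $g\in N(H,W)\setminus S(H,W)$, as claimed. Finally, if $H$ is normal in $L$, then for each such $g$ one has $gHg^{-1}=H$, so the measure $g\mu_H$ is $H$-invariant; integrating over the ergodic decomposition preserves $H$-invariance, yielding that $\lambda$ itself is $H$-invariant.

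The only non-routine point is the careful choice of $H$ and the identification of the singular set via dimension minimality; the rest is a direct bookkeeping application of Ratner's theorem, and essentially follows \cite[Theorem 2.2]{MS95} verbatim in our setting.
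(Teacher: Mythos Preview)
The paper does not give its own proof of this statement: it is stated as a consequence of Ratner's measure classification \cite[Theorem~1]{Rat91} in the formulation of \cite[Theorem~2.2]{MS95}, with no further argument. Your sketch is exactly the standard proof from those references, so there is no discrepancy in approach.

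Two small points worth tightening. First, the assertion ``$F\subset H$'' for an ergodic component $g\mu_F$ whose support meets $\pi(N(H,W))\setminus\pi(S(H,W))$ is not automatic from $g\in N(F,W)$ alone; you need the minimality in Ratner's theorem. Choosing a representative $g'\in N(H,W)$ of a generic point, the closed finite-volume orbit $\pi(g'H)$ contains $Wx$, hence contains $\overline{Wx}=\pi(g'F)$; since $F$ is connected and $H\Lambda$ has $H$ as its identity component, this forces $F\subset H$. Second, for the ``$H$ normal $\Rightarrow$ $\lambda$ is $H$-invariant'' clause, your integration argument as written only covers the restriction of $\lambda$ to $\pi(N(H,W))$. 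One should observe that if $H\lhd L$ and $N(H,W)\neq\emptyset$ then $W\subset H$ and hence $N(H,W)=L$, so $\lambda$ is entirely supported on $\pi(N(H,W))\setminus\pi(S(H,W))$ and the conclusion follows. Both are routine, and with them your argument is complete and matches the cited sources.
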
\par 
	Let $V$ be as in \Cref{sect:nondivergence}. Let $d = \dim H$, and fix $p_H \in \bigwedge^d\mathfrak{h}\backslash \{0\}$. Due to \cite[Theorem 3.4]{DM93}, the orbit $\Lambda p_H$ is a discrete subset of $V$. We note that for any $g\in N_L(H)$, $gp_H = \det(\mathrm{Ad}\,g\vert_\mathfrak{h})p_H$. Hence the stabilizer of $p_H$ in $L$ equals
	\begin{equation}
	N_L^1(H) := \{ g\in N_L(H)\colon \det(\mathrm{Ad}\,g\vert_\mathfrak{h})=1\}.
	\end{equation}
	Recall that $\mathrm{Lie}(W)=\R w_0$. Let
	\begin{equation}
	\mathcal{A} = \{ v\in V \colon v\wedge w_0 = 0 \},
	\end{equation}
	where $V$ is defined in \Cref{def:V}. Then $\mathcal{A}$ is a linear subspace of $V$. We observe that
	\begin{equation}
	N(H,W) = \{ g\in L\colon g\cdot p_H \in \mathcal{A} \}.
	\end{equation}\par
	Recall that $x_0=l\Lambda\in L/\Lambda$. Using the fact that $\phi$ is analytic, we obtain the following consequence of the linearization technique and $(C,\alpha)$-good property (see \cite{Sha09Duke1}\cite{Sha09Invention}\cite{Sha10}).
	
	\begin{proposition}\label{prop:nonfocusing_creterion}
		Let $C$ be a compact subset of $N(H,W)\backslash S(H,W)$. Given $\epsilon > 0$, there exists a compact set $\mathcal{D}\subset\mathcal{A}$ such that, given a relatively compact neighborhood $\Phi$ of $\mathcal{D}$ in $V$, there exists a neighborhood $\mathcal{O}$ of $\pi(C)$ in $L/\Lambda$ such that for any $t\in\R$ and subinterval $J\subset I$, one of the following statements holds:
		\begin{enumerate}[(I)]
			\item $\lvert \{ s\in J \colon \delta(s)a(t)\phi(s)x_0 \in \mathcal{O} \} \rvert \leq \epsilon \lvert J \rvert$.
			\item There exists $\gamma\in\Lambda$ such that $\delta(s)a(t)\phi(s)l\gamma p_H \in \Phi$ for all $s\in J$.
		\end{enumerate}
	\end{proposition}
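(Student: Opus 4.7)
The plan is to carry out the standard Dani--Margulis linearization argument in the present twisted setting, following the templates in \cite{Sha09Duke1}\cite{Sha09Invention}\cite{Sha10}. Since $C \subset N(H,W)$, the image $\mathcal D_0 := C \cdot p_H$ lies inside $\mathcal A$, and it is compact. Let $\mathcal D$ be any compact neighborhood of $\mathcal D_0$ inside $\mathcal A$; this is the compact set whose existence is being asserted. Given an arbitrary relatively compact neighborhood $\Phi$ of $\mathcal D$ in $V$, it remains to produce $\mathcal O$.

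The construction of $\mathcal O$ rests on two ingredients. The first is a uniqueness statement: by \cite[Theorem 3.4]{DM93} the orbit $\Lambda \cdot p_H$ is discrete in $V$, and combined with the compactness of $\Phi$ this yields a neighborhood $\mathcal O_0$ of $C$ in $L$ such that for each $g \in \mathcal O_0$ at most one $N_L^1(H)$-coset of $\gamma \in \Lambda$ contributes to $g l \gamma p_H \in \Phi$, and moreover this coset varies locally constantly in $g \in \mathcal O_0$. Indeed, $l\Lambda \cdot p_H$ meets any fixed compact set in only finitely many points, so near a point of $C$ only the single coset whose representative lies in a neighborhood of $\mathcal D_0$ can be mapped into a thin tube around $\mathcal A$ containing $\Phi$, while transverse distances separate distinct cosets uniformly. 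Set $\mathcal O = \pi(\mathcal O_0)$.

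The second ingredient is the $(C,\alpha)$-good dichotomy. For each candidate $\gamma \in \Lambda$ the coordinates of $s \mapsto v_\gamma(s) := \delta(s) a(t) \phi(s) l \gamma p_H$ are obtained by applying polynomial functions on $V$ to the analytic maps $\phi$ and $\delta$ on the compact interval $[a,b]$, and as $t$ and $\gamma$ vary this family admits uniform $(C,\alpha)$-good constants (by \cite[Proposition 3.4]{KM98}, adapted as in \cite[Lemma 4.1]{Sha09Duke1}). Suppose (I) fails, so the set $E = \{s \in J : \delta(s) a(t) \phi(s) x_0 \in \mathcal O\}$ has measure greater than $\epsilon |J|$. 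For each $s \in E$ choose $\gamma(s) \in \Lambda$ with $v_{\gamma(s)}(s) \in \Phi$; by the uniqueness statement $\gamma(s)$ is locally constant on $E$. Having arranged $\Phi$ at the outset to contain a slightly smaller neighborhood $\Phi_0$ of $\mathcal D$, the $(C,\alpha)$-good property applied to the coordinate functions measuring failure of $v_\gamma(\cdot)$ to lie in $\Phi$ forces the single chosen $\gamma$ to satisfy $v_\gamma(s) \in \Phi$ for all $s \in J$, yielding (II).

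The principal obstacle is the step that forces a \emph{single} $\gamma$ to work uniformly across all of $J$: this demands both the discreteness-based uniqueness of $\gamma(s)$ and a choice of $(C,\alpha)$-good constants that is uniform in $t$ (and in $\gamma$). The twist $\delta(s)$ is analytic on the compact interval $[a,b]$, so it affects neither the discreteness of $\Lambda \cdot p_H$ nor the uniform $(C,\alpha)$-good property, and the argument therefore proceeds as in the references cited.
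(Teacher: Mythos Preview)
Your approach is the standard Dani--Margulis linearization argument, which is precisely what the paper invokes by citing \cite{Sha09Duke1}\cite{Sha09Invention}\cite{Sha10} without giving its own proof; in that sense you and the paper agree.

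That said, your uniqueness step is not correctly justified. Discreteness of $\Lambda\cdot p_H$ together with compactness of $\Phi$ only tells you that \emph{finitely many} cosets can contribute, not that at most one does; your phrase ``transverse distances separate distinct cosets uniformly'' is doing work it cannot do on its own. The actual mechanism is the self-intersection property stated just above the proposition in the paper, namely $N(H,W)\cap N(H,W)\gamma\subset S(H,W)$ for $\gamma\in\Lambda\setminus N_L(H)$: since $C$ is disjoint from $S(H,W)$, a point of $C$ cannot lie in $N(H,W)\gamma^{-1}$ for two inequivalent $\gamma$, and this is what forces uniqueness of the coset with $c\gamma p_H\in\mathcal A$. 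Correspondingly, the uniqueness should be stated for a \emph{small} neighborhood of $\mathcal D$ (your $\Phi_0$, chosen before $\mathcal O$), not for the arbitrary given $\Phi$; when $\Phi$ is large there may well be several $\gamma$ with $g l\gamma p_H\in\Phi$.

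Your final paragraph also compresses the covering step too much. The $(C,\alpha)$-good property does not ``force'' a single $\gamma$ to work on all of $J$; rather, if (II) fails, one covers $E$ by maximal subintervals $J_\gamma\subset J$ on which $v_\gamma\in\Phi$, uses uniqueness (with respect to the inner set $\Phi_0$) to make these essentially disjoint on $E$, and then $(C,\alpha)$-goodness bounds $|\{s\in J_\gamma: v_\gamma(s)\in\Phi_0\}|/|J_\gamma|$ uniformly small, yielding (I) after summation. This is routine once the uniqueness is set up correctly, but as written your sketch elides it.
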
\par
	
	\subsection{Algebraic consequences of positive limit measure on singular sets}
	Recall the definition of $\lambda_i$ in \eqref{eq:def_lambda_i}, where we take $\mathfrak{X} = L/\Lambda$ and $x_i=x_0$. After passing to a subsequence, $\lambda_i\to\lambda$ in the space of probability measures on $L/\Lambda$, and by \Cref{thm:non_divergence} and \Cref{thm:unipotent_invariance}, we know that there exists $H\in\mathcal{H}$ such that
	\begin{equation}
	\lambda(\pi(N(H,W)\backslash S(H,W))>0.
	\end{equation}\par 
	In this section, we use \Cref{prop:nonfocusing_creterion} and \Cref{thm:linear_stability} to obtain the following algebraic consequence, which is an analogue of \cite[Proposition 4.8]{Sha09Duke1}.
	
	\begin{proposition}\label{prop:algebraic_consequence}
		Let $l\in L$ such that $x_0=l\Lambda$. Suppose $\lambda_i\to\lambda$, then there exists $\gamma\in\Lambda$ such that
		\begin{equation}\label{eq:algebraic_consequence}
		\phi(s)l\gamma p_H \in V^{0-}(a),\quad \forall s\in I.
		\end{equation}
	\end{proposition}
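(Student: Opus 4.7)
The plan is to combine the nonfocusing criterion (\Cref{prop:nonfocusing_creterion}) with Linear Stability (\Cref{thm:linear_stability}) in order to isolate a single $\gamma\in\Lambda$ whose orbit $\{\phi(s)l\gamma p_H : s\in I\}$ lands in $V^{0-}(a)$. The strategy is standard in linearization arguments (compare \cite[Proposition 4.8]{Sha09Duke1}), but the key new input is that linear stability is now available in our generality.

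By inner regularity of $\lambda$, one can choose a compact subset $C\subset N(H,W)\setminus S(H,W)$ with $\eta:=\lambda(\pi(C))>0$. Apply \Cref{prop:nonfocusing_creterion} to this $C$ with $\epsilon=\eta/2$, obtaining a compact $\mathcal{D}\subset\mathcal{A}$, a relatively compact neighborhood $\Phi$ of $\mathcal{D}$, and a neighborhood $\mathcal{O}$ of $\pi(C)$. Since $\lambda_i\to\lambda$ in the weak-$\ast$ topology and $\mathcal{O}$ is open, the portmanteau theorem gives $\lambda_i(\mathcal{O})\geq\lambda(\mathcal{O})-o(1)>\eta/2=\epsilon$ for all sufficiently large $i$. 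In view of the definition of $\lambda_i$, this rules out alternative (I) with $J=I$, and alternative (II) produces $\gamma_i\in\Lambda$ such that
\begin{equation*}
\delta(s)\,a(t_i)\,\phi(s)\,l\gamma_i p_H \in \Phi,\quad\forall s\in I.
\end{equation*}

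Put $v_i:=l\gamma_i p_H$. Since $\delta$ is continuous on the compact interval $I$ and $\Phi$ is bounded, there is a uniform bound $\sup_{s\in I}\lVert a(t_i)\phi(s)v_i\rVert\leq M$ independent of $i$. Applying \Cref{thm:linear_stability} to the representation $V$ (using our standing assumption that the image of $\widetilde{\phi}$ lies in no unstable Schubert variety), we deduce $\lVert v_i\rVert\leq M/C_0$ for some constant $C_0>0$. Because $\Lambda\cdot p_H$ is discrete in $V$, so is the translate $l\Lambda\cdot p_H$, hence the bounded sequence $\{v_i\}_{i\in\N}$ takes only finitely many values; after passing to a subsequence we may assume $v_i=v:=l\gamma p_H$ for a fixed $\gamma\in\Lambda$.

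It remains to verify $\phi(s)v\in V^{0-}(a)$ for every $s\in I$. Fix $s$ and decompose $\phi(s)v=\sum_{\chi}w_\chi(s)$ into weight vectors for the maximal $\R$-split torus containing $\{a(t)\}$, so that $a(t_i)\phi(s)v=\sum_{\chi}t_i^{\langle\chi,a\rangle}w_\chi(s)$. This vector lies in the bounded set $\delta(s)^{-1}\Phi$ while $t_i\to\infty$, so every component with $\langle\chi,a\rangle>0$ must vanish. Consequently $\lim_{t\to\infty}a(t)\phi(s)v$ exists (it is the sum of the zero-weight components), which is exactly the condition $\phi(s)l\gamma p_H\in V^{0-}(a)$, establishing \eqref{eq:algebraic_consequence}. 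The decisive and arguably most delicate step is the uniform bound on $\lVert v_i\rVert$ supplied by \Cref{thm:linear_stability}: without it the integer points $\gamma_i$ could escape to infinity, and no single $\gamma$ could be extracted.
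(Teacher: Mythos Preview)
Your proof is correct and follows essentially the same approach as the paper's: both apply \Cref{prop:nonfocusing_creterion} with $J=I$, use weak-$\ast$ convergence to rule out alternative (I), invoke \Cref{thm:linear_stability} to bound $\lVert l\gamma_i p_H\rVert$ uniformly, and then exploit the discreteness of $\Lambda\cdot p_H$ to extract a single $\gamma$. Your write-up is in fact slightly more detailed than the paper's (explicit portmanteau argument, explicit weight decomposition to conclude $\phi(s)v\in V^{0-}(a)$), but the underlying strategy is identical.
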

	
	\begin{proof}
		By \eqref{eq:positive_limit_measure_on_singular_set} there exists a compact subset $C\subset N(H,W)\backslash S(H,W)$ and a constant $c_0 > 0$ such that $\lambda(\pi(C))>c_0$. We fix $0<\epsilon < c_0$, and apply \Cref{prop:nonfocusing_creterion} to obtain $\mathcal{D}$. We choose any relatively compact neighborhood $\Phi$ of $\mathcal{D}$, and obtain an $\mathcal{O}$ such that either (I) or (II) holds.\par 
		Since $\lambda_i\to\lambda$, there exists $i_0\in\N$ such that for all $i>i_0$, (I) does not hold. Therefore (II) holds for all $i>i_0$. In other words, there exists a sequence $\{\gamma_i \}$ in $\Lambda$ and a subinterval $J\subset I$ such that
		\begin{equation}
		\delta(s)a(t_i)\phi(s)l\gamma_ip_H\in \Phi,\quad \forall i>i_0,\,\forall s\in J.
		\end{equation}\par 
		By \Cref{thm:linear_stability}, we know that $\{\gamma_ip_H \}$ is bounded. Hence after passing to a subsequence, we may assume that there exists $\gamma\in\Lambda$ such that $\gamma_ip_H = \gamma p_H$ holds for all $i$. It follows that $a(t_i)\phi(s)l\gamma p_H$ remains bounded in $V$. This concludes the proof.
	\end{proof}
	
	Next we are able to obtain more algebraic information from \Cref{prop:algebraic_consequence}. First we show that the limiting process actually happens inside the $G$-orbit $G\cdot l\gamma p_H$.
	
	\begin{proposition}\label{prop:limit_on_G_orbit}
		Let the notations be as in \Cref{prop:algebraic_consequence}. Then for all but finitely many $s\in I=[a,b]$, there exists $\xi(s)\in P$ such that
		\begin{equation}\label{eq:limit_on_G_orbit}
		\lim_{t\to\infty}a(t)\phi(s)l\gamma p_H = \xi(s)\phi(s)l\gamma p_H.
		\end{equation}
	\end{proposition}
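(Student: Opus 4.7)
Set $v(s) := \phi(s)l\gamma p_H$. By \Cref{prop:algebraic_consequence}, $v(s) \in V^{0-}(a)$ for every $s \in I$. Decomposing into $a(t)$-weight spaces yields $v(s) = v^0(s) + v^-(s)$ with $v^0(s) \in V^0(a)$ and $v^-(s) \in V^-(a)$, both analytic in $s$. Since $a(t)$ fixes $V^0$ pointwise and contracts $V^-$ to zero, $\lim_{t\to\infty} a(t)v(s) = v^0(s)$. Producing $\xi(s) \in P$ satisfying \eqref{eq:limit_on_G_orbit} is therefore equivalent to establishing $v(s) \in P \cdot v^0(s)$.

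The crucial observation is that the $G$-orbit $Y := G\cdot(l\gamma p_H) \subset V$ does not depend on $s$: since $\phi(s) \in G$, we have $v(s) = \phi(s)\cdot(l\gamma p_H) \in Y$ for every $s$. My plan is to analyze $Y$ via a Bia\l ynicki--Birula-type decomposition under the $\mathbb{G}_m$-action through $a(t)$. The fixed locus $Y \cap V^0$ decomposes into finitely many connected components $F_1,\dots,F_k$, and the corresponding attracting cells $Y_i^+ = \{y \in Y : \lim_{t\to\infty} a(t)y \in F_i\}$ are $P$-invariant. Over a Zariski open subvariety of each $F_i$, the fiber of the limit map $Y_i^+ \to F_i$ over a point $y_0$ coincides with the single $P$-orbit $P \cdot y_0$. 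I would verify this concretely by constructing $\xi \in U$ (the unipotent radical of $P$) via the exponential map, solving $\exp(X)\cdot y_0 = v$ for $X \in \mathrm{Lie}\,U$ inductively over the $a(t)$-weights of $v - y_0$ from highest to lowest, using the infinitesimal action of $\mathrm{Lie}\,U$ to fill in each successive negative-weight component.

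Granted this, for every $s$ with $v^0(s)$ lying in the generic stratum of its attracting component, we have $v(s) \in P \cdot v^0(s)$ and the required $\xi(s) \in P$ exists. The exceptional set $E := \{s \in I : v^0(s) \text{ lies in the singular stratum}\}$ is an analytic subset of $I$, hence either finite (yielding the ``all but finitely many $s$'' in the statement) or all of $I$. To rule out the degenerate case $E = I$, I would apply \Cref{prop:basic_lemma2} to $v_0 := l\gamma p_H$: the persistent containment in the singular stratum forces $\widetilde\phi(I)$ into a weakly unstable Schubert variety, and sharpening the inequality analysis in the proof of \Cref{prop:basic_lemma2}---using that the persistently nonzero $v^-(s)$ provides a weight $\chi$ with $\langle\chi, a^w\rangle < 0$ strictly---upgrades the conclusion to an \emph{unstable} Schubert variety, contradicting the standing hypothesis of \Cref{sect:nondivergence}.

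The main obstacle will be making the affine Bia\l ynicki--Birula step rigorous, i.e.\ verifying that generic attracting fibers in the $G$-orbit $Y$ are single $P$-orbits, especially when some $F_i$ is higher-dimensional and the Levi factor $L(a) \subset P$ acts nontrivially on $F_i$. I anticipate handling this by exploiting the homogeneity of $Y$: two points of a given attracting cell differ by an element of $\mathrm{Stab}_G(y_0)$, and the intersection of this stabilizer with $P$ parametrizes the fiber precisely when $y_0$ avoids a proper subvariety of $F_i$ where the $P$-orbit dimension drops; excluding this lower-dimensional subvariety provides the generic stratum used above.
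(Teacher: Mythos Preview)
Your overall shape is right---reduce to showing $v(s)\in P\cdot v^0(s)$---but you have introduced an unnecessary stratification and, in doing so, blurred the one genuine issue. The paper's proof has two clean steps. First, it shows that for all but finitely many $s$ the limit $v^0(s)$ lies in the orbit $Y=G\cdot(l\gamma p_H)$ itself (not just its closure): if $v^0(s)\in\partial Y$ for infinitely many $s$, then by analyticity this holds for all $s$, so $\phi(s)\in G(v,S,a)$ with $S=\partial(Gv)$, and \Cref{cor:basic_lemma1} forces $\widetilde\phi(I)$ into an \emph{unstable} Schubert variety---contradiction. Second, once $v^0(s)\in Y$, the paper shows by a direct Lie-algebra decomposition (using that $\mathrm{Stab}_G(v^0(s))\supset\{a(t)\}$ so its Lie algebra is $\mathrm{Ad}\,a(t)$-graded) that $v(s)\in P\cdot v^0(s)$ \emph{always}, with no genericity hypothesis. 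Your ``singular stratum'' inside $Y$ is therefore empty: attracting fibers in a homogeneous $G$-orbit over a fixed point are always single $P$-orbits, and the Bia\l ynicki--Birula machinery, while not wrong, obscures this.

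The real gap in your argument is that your exceptional set $E$ is defined in terms of a singular stratum of $Y\cap V^0$, but the actual bad set is $\{s:v^0(s)\notin Y\}$, which your setup does not cover (your $F_i$ are components of $Y\cap V^0$, so a point landing in $\overline{Y}\setminus Y$ lies in no $F_i$). When you do turn to ruling out the persistent bad case, you invoke \Cref{prop:basic_lemma2} and propose to ``sharpen'' it using a strictly negative weight coming from $v^-(s)\neq 0$. This does not work: the proof of \Cref{prop:basic_lemma2} passes through Kempf's morphism $f:V\to W$ with $f^{-1}(0)=S$, and the weights $\chi$ in that argument live in $W$, not $V$; a nonzero $v^-(s)$ in $V$ says nothing about strict negativity of $\langle\chi,a^w\rangle$ in $W$. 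The correct tool is \Cref{cor:basic_lemma1}, which applies directly to $S=\partial(Gv)$ and already yields the \emph{unstable} (not merely weakly unstable) conclusion with no sharpening required.
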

	
	\begin{proof}
		Denote $v=l\gamma p_H$. According to \Cref{prop:algebraic_consequence}, the limit on the left-hand side of \eqref{eq:limit_on_G_orbit} exists. We claim that the limit actually lies in the $G$-orbit $Gv$ for all but finitely many $s\in I$. \par 
		Consider the boundary $S=\partial(Gv)=\overline{Gv}\backslash Gv$. If $S$ is empty then the claim holds automatically. Now suppose that $S$ is non-empty, and that there exist infinitely many $s\in I$ such that $\lim_{t\to\infty}a(t)\phi(s)v$ is contained in $S$. Since $\phi$ is analytic, we have that for any $s\in I$, $\lim_{t\to\infty}a(t)\phi(s)v$ is contained in $S$. Hence in view of \eqref{eq:cor_basic_lemma1},
		\begin{equation}\label{eq:temp8}
		\phi(s)\in G(v,S,a),\quad\forall s\in J.
		\end{equation}
		Moreover, by \Cref{cor:basic_lemma1} there exists $\delta_0\in\Gamma^{+}(T)$ and $g_0\in G$ such that
		\begin{equation}\label{eq:temp9}
		G(v, S,a) \subset \bigsqcup_{w\in W^+(\delta_0, a)}Pw^{-1}Bg_0^{-1}.
		\end{equation}
		By \eqref{eq:temp8},\eqref{eq:temp9} and \Cref{lem:union_of_Schubert}(b), the image of $\widetilde{\phi}$ is contained in an unstable Schubert variety with respect to $a(t)$, which contradicts our assumption. \par 
		Hence for all but finitely many $s\in I$, there exists $\eta(s)\in G$ such that
		\begin{equation}\label{eq:temp15}
		\lim_{t\to\infty}a(t)\phi(s)v = \eta(s)\phi(s)v.
		\end{equation}
		Now fix any $s$ such that \eqref{eq:temp15} holds. Take $t_0>0$, and set $w=a(t_0)\phi(s)v$. Then
		\begin{equation}\label{eq:temp18}
		\lim_{t\to\infty}a(t)w = \eta(s)a(t_0)^{-1}w.
		\end{equation} 
		By taking $t_0$ large enough, we may assume that $\eta(s)a(t_0)^{-1}$ is contained in a small neighborhood of the neutral element in $G$. Let $F$ denote the stabilizer of $\eta(s)a(t_0)^{-1}w=\eta(s)\phi(s)v$ in $G$, and let $\mathfrak{f}$ be the Lie algebra of $F$. It is easy to see that $F$ contains $\{a(t)\}$.\par 
		Now the Lie algebra $\mathfrak{f}$ of $F$ is $\mathrm{Ad}\,a(t)$-invariant, and thus we have the following decomposition as a consequence of $a(t)$ being semisimple:
		\begin{equation}\label{eq:temp16}
		\mathfrak{g} = \mathfrak{f}^{\perp} \oplus \mathfrak{f},
		\end{equation}
		where $\mathfrak{f}^{\perp}$ is an $\mathrm{Ad}\,a(t)$-invariant subspace of $\mathfrak{g}$. \par 
		On the other hand, according to the eigenvalues of $\mathrm{Ad}\,a(t)$, we can decompose $\mathfrak{g}$ into
		\begin{equation}\label{eq:temp17}
		\mathfrak{g} = \mathfrak{g}^-\oplus \mathfrak{g}^0 \oplus \mathfrak{g}^+.
		\end{equation} \par 
		Combining the above two decompositions \eqref{eq:temp16}\eqref{eq:temp17}, we get
		\begin{equation}
		\mathfrak{g} = \mathfrak{g}^-\oplus \mathfrak{g}^0 \oplus (\mathfrak{g}^+\cap\mathfrak{f}^{\perp})\oplus (\mathfrak{g}^+\cap\mathfrak{f}).
		\end{equation}
		Hence there exist $X_s^{0-}\in \mathfrak{g}^0\oplus \mathfrak{g}^-$ and $X_s^+\in \mathfrak{g}^+\cap\mathfrak{f}^{\perp}$ such that
		\begin{equation}
		a(t_0)\eta(s)^{-1} \in \exp X_s^{0-}\exp X_s^{+}F.
		\end{equation}
		By \eqref{eq:temp18}, we have that $X_s^{+}=0$. Hence
		\begin{equation}
		a(t_0)\eta(s)^{-1}\in \exp X_s^{0-}F.
		\end{equation}
		Set $\xi(s)=\exp(-X_s^{0-})a(t_0)$, and one can verify that \eqref{eq:limit_on_G_orbit} holds.
	\end{proof}
	
	If we consider the slightly larger family of \emph{weakly} unstable Schubert varieties, and further assume that the image of $\widetilde{\phi}$ is not contained in any weakly unstable Schubert variety, then we could obtain the following refinement of \Cref{prop:limit_on_G_orbit}.
	
	\begin{proposition}\label{prop:closed_orbit_reductive_stablizer}
		In the situation of \Cref{prop:algebraic_consequence}, further assume that the image of $\widetilde{\phi}$ is not contained in any weakly unstable Schubert variety of $G/P$ with respect to $a(t)$. Then the orbit $G\cdot l\gamma p_H$ is closed, and the stabilizer of $l\gamma p_H$ in $G$ is reductive.
	\end{proposition}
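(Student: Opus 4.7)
The plan is to derive the closedness of $G\cdot l\gamma p_H$ by contradiction, using \Cref{prop:basic_lemma2} as the main GIT input, and then invoke Matsushima's criterion to pass from closedness of the orbit to reductivity of the stabilizer.

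Write $v = l\gamma p_H$. By \Cref{prop:algebraic_consequence} we have $\phi(s)v \in V^{0-}(a)$ for every $s \in I$, i.e.\ $\phi(s) \in G(v,V^{0-}(a))$ for all $s \in I$. Suppose for contradiction that the $G$-orbit $Gv$ is not closed. Then \Cref{prop:basic_lemma2} applies and yields $\delta_0 \in \Gamma^+(T)$ and $g_0 \in G$ such that
\begin{equation}
G(v,V^{0-}(a)) \subset \bigsqcup_{w \in W^{0+}(\delta_0,a)} Pw^{-1}Bg_0^{-1}.
\end{equation}
Applying $\pi_P$ gives
\begin{equation}
\widetilde{\phi}(I) = \pi_P(\phi(I)) \subset \pi_P\!\bigl(G(v,V^{0-}(a))\bigr) \subset \bigsqcup_{w \in W^{0+}(\delta_0,a)} g_0 B w P,
\end{equation}
which by \Cref{lem:union_of_Schubert}(c) is a finite union of weakly unstable Schubert varieties of $G/P$ with respect to $a(t)$. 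Since $\widetilde{\phi}$ is analytic, its image is irreducible, so it must be contained in a single one of these Schubert varieties. This contradicts the hypothesis that the image of $\widetilde{\phi}$ is not contained in any weakly unstable Schubert variety. Hence $Gv$ is closed.

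For the second assertion, I invoke Matsushima's criterion: for a reductive algebraic group $G$ acting linearly on a finite dimensional vector space (or more generally, on an affine variety) in characteristic zero, a $G$-orbit $Gv$ is closed if and only if the stabilizer $\mathrm{Stab}_G(v)$ is reductive. Since we have just shown that $Gv = G\cdot l\gamma p_H$ is closed in $V$, it follows that $\mathrm{Stab}_G(l\gamma p_H)$ is reductive. The main step is the contradiction with the weakly unstable Schubert hypothesis; the reductivity conclusion is then a direct application of a standard GIT theorem.
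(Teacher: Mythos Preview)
Your proof is correct and follows essentially the same approach as the paper's own proof: assume $Gv$ is not closed, invoke \Cref{prop:basic_lemma2} to trap $\phi(I)$ in $\bigsqcup_{w\in W^{0+}(\delta_0,a)}Pw^{-1}Bg_0^{-1}$, pass to $G/P$ via $\pi_P$, use \Cref{lem:union_of_Schubert}(c) and analyticity to contradict the weakly unstable hypothesis, and finish with Matsushima's criterion. One small remark: your ``if and only if'' formulation of Matsushima's criterion is not accurate (for instance, $\mathbb{G}_m$ acting by scaling on $\mathbb{A}^1$ has trivial, hence reductive, stabilizers at nonzero points, yet those orbits are not closed); only the implication ``closed orbit $\Rightarrow$ reductive stabilizer'' holds in general, and that is the direction you actually use.
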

	
	\begin{proof}
		Write $v=l\gamma p_H$. Suppose that $Gv$ is not closed, then the boundary $S=\partial(Gv)$ is non-empty. By \Cref{prop:basic_lemma2} there exists $\delta_0\in\Gamma^{+}(T)$ and $g_0\in G$ such that
		\begin{equation}\label{eq:temp1}
		G(v, V^{0-}(a)) \subset \bigsqcup_{w\in W^{0+}(\delta_0, a)}Pw^{-1}Bg_0^{-1}.
		\end{equation}
		Also by \eqref{eq:algebraic_consequence} we know
		\begin{equation}\label{eq:temp2}
		\phi(s) \in G(v, V^{0-}(a)),\quad \forall s\in I.
		\end{equation}
		By \eqref{eq:temp1}, \eqref{eq:temp2} and \Cref{lem:union_of_Schubert}(c), the image of $\widetilde{\phi}$ is contained in a weakly unstable Schubert variety, which contradicts our assumption on $\phi$. \par 
		Therefore $Gv$ is closed, i.e. $G\cdot l\gamma p_H$ is closed. By Matsushima's criterion, the stabilizer of $l\gamma p_H$ in $G$ is reductive.
	\end{proof}
	
	The following proposition describes the obstructions to equidistribution. (C.f. \cite[Theorem 6.1]{SY16}.)
	
	\begin{proposition}\label{prop:algebraic_obstruction}
		Suppose that the image of $\widetilde{\phi}$ is not contained in any unstable Schubert variety of $G/P$ with respect of $a(t)$, and that $\lambda_i\to\lambda$. Then there exists $g\in G$ and an algebraic subgroup $F$ of $L$ containing $\{a(t)\}$ such that $Fgl\Lambda$ is closed and admits a finite $F$-invariant measure, and that
		\begin{equation}\label{eq:stuck_in_F}
		\phi(s) \in P(F\cap G)g,\quad\forall s\in I.
		\end{equation} \par 
		Furthermore, if the image of $\widetilde{\phi}$ is not contained in any weakly unstable Schubert variety, then we can choose $F$ such that $F\cap G$ is reductive.
	\end{proposition}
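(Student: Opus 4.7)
The plan is to convert the data from \Cref{prop:algebraic_consequence} and \Cref{prop:limit_on_G_orbit} into the pair $(F, g)$ demanded by the statement. By \Cref{prop:algebraic_consequence} there is $\gamma \in \Lambda$ with $v := l\gamma p_H$ satisfying $\phi(s)v \in V^{0-}(a)$ for all $s\in I$. Pick any $s_0 \in I$ outside the finite exceptional set of \Cref{prop:limit_on_G_orbit} and apply that result to produce $\xi_0 \in P$ with
$$v_\infty := \lim_{t\to\infty}a(t)\phi(s_0)v = \xi_0\phi(s_0) v \in G\cdot v.$$
As a limit along the $\{a(t)\}$-orbit, $v_\infty$ is $\{a(t)\}$-fixed. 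Writing $h := \xi_0\phi(s_0)l\gamma$, so that $hp_H = v_\infty$, this forces $h^{-1}\{a(t)\}h \subset \mathrm{Stab}_L(p_H) = N_L^1(H) \subset N_L(H)$; in particular $\{a(t)\}$ normalizes $hHh^{-1}$.

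Next, set $g := \xi_0\phi(s_0)\in G$, so $gl\Lambda = h\gamma^{-1}\Lambda = h\Lambda$, and define $F := \{a(t)\}\cdot hHh^{-1}\subset L$; by the normalization property $F$ is an algebraic subgroup of $L$ containing $\{a(t)\}$. Closedness and finite $F$-invariant measure of the orbit $F\cdot gl\Lambda = \{a(t)\}\cdot hH\Lambda$ then follow by combining closedness and finite $hHh^{-1}$-invariant measure of $hH\Lambda$ (inherited from $H\in\mathcal H$) with the containment $\{a(t)\}\subset hN_L(H)h^{-1}$, via standard arithmeticity of lattice normalizers.

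The heart of the proof is verifying $\phi(s)g^{-1}\in P(F\cap G)$ for every $s\in I$. For each generic $s$, \Cref{prop:limit_on_G_orbit} supplies $\xi(s)\in P$ with $\lim_{t\to\infty}a(t)\phi(s)v = \xi(s)\phi(s)v$, and this limit lies in the $\{a(t)\}$-fixed locus $(G\cdot v)^A$, where $A = \{a(t)\}$. Since $(G\cdot v)^A$ is preserved by the centralizer $Z_G(A)\subset P\cap G$, analyticity of $\phi$ together with \Cref{cor:basic_lemma1} applied to each proper $G$-invariant subscheme of $\overline{G\cdot v}$ (to exclude escape into boundary strata) forces these limits to stay in the single $Z_G(A)$-orbit through $v_\infty$. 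Writing $\xi(s)\phi(s)v = z(s)\cdot v_\infty = z(s) g\cdot l\gamma p_H$ for some $z(s)\in Z_G(A)$ and unwinding the $G$-stabilizer of $v$ gives $\phi(s)g^{-1}\in P\cdot(F\cap G)$, as required.

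For the reductivity addendum, when $\widetilde\phi$ avoids all weakly unstable Schubert varieties, \Cref{prop:closed_orbit_reductive_stablizer} yields that $G\cdot l\gamma p_H$ is closed with reductive $G$-stabilizer; conjugating by $\xi_0\phi(s_0)\in G$, the stabilizer $\mathrm{Stab}_G(v_\infty) = G\cap hN_L^1(H)h^{-1}$ is reductive. Since $F\cap G$ is generated inside $\mathrm{Stab}_G(v_\infty)$ by the reductive subgroup $G\cap hHh^{-1}$ together with the torus $\{a(t)\}$, it too is reductive. The main obstacle I anticipate is the uniform-in-$s$ coset containment of the third paragraph: turning pointwise limit information along the analytic curve into the rigid algebraic constraint $\phi(s)\in P(F\cap G)g$ depends delicately on analyticity together with \Cref{cor:basic_lemma1} and \Cref{prop:basic_lemma2} to rule out lower-stratum collapse.
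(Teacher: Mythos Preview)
Your proposal has the right overall shape---fix $s_0$, use \Cref{prop:limit_on_G_orbit} to define $g$ and $F$, then trap $\phi(s)$ in $P(F\cap G)g$---but the core step in the third paragraph has a genuine gap, and your choice of $F$ creates inconsistencies.

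The central problem is the claim that the limits $\lim_{t\to\infty}a(t)\phi(s)v$ all lie in the single $Z_G(A)$-orbit through $v_\infty$. Your justification invokes \Cref{cor:basic_lemma1} ``applied to each proper $G$-invariant subscheme of $\overline{G\cdot v}$'', but that result only rules out limits landing in $G$-invariant closed subschemes; a $Z_G(A)$-orbit in $(G\cdot v)^A$ is not $G$-invariant, and in general $(G\cdot v)^A$ has several $Z_G(A)$-orbits (they are indexed by $\mathrm{Stab}_G(v)$-conjugacy classes of embeddings $A\hookrightarrow\mathrm{Stab}_G(v)$). Analyticity gives connectedness of the limit curve, but over $\R$ the $Z_G(A)$-orbits need not be open in the fixed locus, so this does not suffice either. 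The paper bypasses this entirely: it takes $F=\mathrm{Stab}_L(gv)$, so that $\mathfrak f\cap\mathfrak g$ is $\mathrm{Ad}\,a(t)$-invariant, decomposes $\mathfrak g=\mathfrak g^{0-}\oplus(\mathfrak g^+\cap\mathfrak f^{\perp})\oplus(\mathfrak g^+\cap\mathfrak f)$, and writes $\xi(s_0)\phi(s)g^{-1}\in\exp X_s^{0-}\exp X_s^{+}\cdot F$ locally near $s_0$. Convergence of $a(t_i)\phi(s)g^{-1}F$ in $G/F$ (which follows from \Cref{prop:limit_on_G_orbit} since the orbit map is a homeomorphism onto $G\cdot v$) forces $X_s^{+}=0$; then analyticity extends $\phi(s)\in PFg$ from a neighborhood of $s_0$ to all of $I$. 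This local Lie-algebra computation is precisely what replaces your unjustified global orbit claim.

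Your choice $F=\{a(t)\}\cdot hHh^{-1}$ also causes trouble. The natural output of any such argument is $\phi(s)\in P\cdot\mathrm{Stab}_G(gv)\cdot g$, and $\mathrm{Stab}_G(gv)=G\cap hN_L^1(H)h^{-1}$, generally strictly larger than your $F\cap G$; so your ``unwinding'' does not land in $P(F\cap G)g$ for your $F$. This propagates to the reductivity addendum: there is no reason for $G\cap hHh^{-1}$ to be reductive, and a group generated by a reductive subgroup and a torus need not be reductive. With the paper's choice $F=\mathrm{Stab}_L(gv)$, reductivity of $F\cap G=\mathrm{Stab}_G(gv)$ is immediate from \Cref{prop:closed_orbit_reductive_stablizer} by conjugation, and the finite-volume requirement is arranged afterwards via \cite[Theorem~2.3]{Sha91}, using that $P$ already contains the central torus of $F$ so that $PFg=PF_1g$.
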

	
	\begin{proof}
		Let $\xi(s)$ be defined as in \Cref{prop:limit_on_G_orbit}. Since the right hand side of \eqref{eq:stuck_in_F} is left $a(t)$-invariant, without loss of generality we may replace $\phi(s)$ with $a(t_0)\phi(s)$ for some large $t_0>0$, and assume that $\xi(s)$ lies in a small neighborhood of $e$ in $G$, for all $s\in I$. Hence we may take $\xi(s)\in P$. \par 
		Fix any $s_0\in I$. Let $g = \xi(s_0)\phi(s_0)$ and $v=l\gamma p_H$. We set $F= \mathrm{Stab}_L(gv)=gl\gamma N_L^1(H)\gamma^{-1}l^{-1}g^{-1}$. By \Cref{prop:limit_on_G_orbit} we have $\{a(t)\}\subset F$. Since $\Lambda\cdot p_H$ discrete, $N_L^1(H)\cdot\Lambda$ is closed. Hence $Fgl\Lambda$ is also closed. \par
		Now the Lie algebra $\mathfrak{f}$ of $F$ is $\mathrm{Ad}\,a(t)$-invariant, and thus we have the following decomposition as a consequence of $a(t)$ being semisimple:
		\begin{equation}\label{eq:temp6}
		\mathfrak{g} = \mathfrak{f}^{\perp} \oplus \mathfrak{f},
		\end{equation}
		where $\mathfrak{f}^{\perp}$ is an $\mathrm{Ad}\,a(t)$-invariant subspace of $\mathfrak{g}$. \par 
		On the other hand, according to the eigenvalues of $\mathrm{Ad}\,a(t)$, we can decompose $\mathfrak{g}$ into
		\begin{equation}\label{eq:temp7}
		\mathfrak{g} = \mathfrak{g}^-\oplus \mathfrak{g}^0 \oplus \mathfrak{g}^+.
		\end{equation} \par 
		Combining the above two decompositions \eqref{eq:temp6}\eqref{eq:temp7}, we get
		\begin{equation}
		\mathfrak{g} = \mathfrak{g}^-\oplus \mathfrak{g}^0 \oplus (\mathfrak{g}^+\cap\mathfrak{f}^{\perp})\oplus (\mathfrak{g}^+\cap\mathfrak{f}).
		\end{equation}
		Hence for all $s$ near $s_0$, there exist $X_s^{0-}\in \mathfrak{g}^0\oplus \mathfrak{g}^-$ and $X_s^+\in \mathfrak{g}^+\cap\mathfrak{f}^{\perp}$ such that
		\begin{equation}\label{eq:temp3}
		\xi(s_0)\phi(s)g^{-1} \in \exp X_s^{0-}\exp X_s^+F.
		\end{equation} \par 
		Since $a(t_i)\phi(s)v$ converge in $V$ as $i\to\infty$, by \Cref{prop:limit_on_G_orbit} we know that $a(t_i)\phi(s)g^{-1}F$ converge in $G/F$ as $i\to\infty$. It follows that
		\begin{equation}\label{eq:temp4}
		X_s^+ = 0,\quad \forall s\in I.
		\end{equation} \par 
		Since $X_s^{0-}\in \mathfrak{g}^0\oplus \mathfrak{g}^-$, we have
		\begin{equation}\label{eq:temp5}
		\exp X_s^{0-}\in P.
		\end{equation} \par 
		Combining \eqref{eq:temp3}\eqref{eq:temp4}\eqref{eq:temp5} we get
		\begin{equation}
		\phi(s) \in PFg,
		\end{equation}
		for all $s\in I$. This implies \eqref{eq:stuck_in_F}. Moreover, by \cite[Theorem 2.3]{Sha91}, there exists a subgroup $F_1$ of $F$ containing all $\mathrm{Ad}$-unipotent one-parameter subgroups of $L$ contained in $F$ such that $F_1gl\Lambda$ admits a finite $F_1$-invariant measure. Since $F$ contains $\{a(t)\}$, $P$ contains the central torus of $F$. Hence $PFg=PF_1g$, and we may replace $F$ by $F_1$. \par 
		If we further assume that the image of $\widetilde{\phi}$ is not contained in any weakly unstable Schubert variety, then by \Cref{prop:closed_orbit_reductive_stablizer} we know that the stabilizer of $l\gamma p_H$ in $G$ is reductive, i.e. $g^{-1}Fg\cap G$ is reductive. Hence $F\cap G$ is also reductive. 
	\end{proof}
	
	\subsection{Lifting of obstructions and proof of equidistribution results}
	
	In this section, we show that the conditions in \Cref{thm:equidistribution1} are preserved under projections. This enables us to use induction to prove the equidistribution results.
	
	\begin{lemma}\label{lem:functoriality}
		Let $G$ be a connected semisimple real algebraic group, and $p\colon G\rightarrow\overline{G}$ be a surjective homomorphism. Let $a(t)$ be a multiplicative one-parameter subgroup of $G$, and $\overline{a(t)}$ be its image in $\overline{G}$. Suppose that $\overline{a(t)}$ is non-trivial. Define (weakly) unstable Schubert varieties and partial flag subvarieties of $\overline{G}/\overline{P}$ with respect to $\overline{a(t)}$, $\overline{T}$ and $\overline{B}$. Then the preimage of any unstable (resp. weakly unstable) Schubert subvariety of $\overline{G}/\overline{P}$ with respect to $\overline{a(t)}$ is an unstable (resp. weakly unstable) Schubert subvariety of $G/P$ with respect to $a(t)$.
	\end{lemma}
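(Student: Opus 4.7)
My plan is to decompose $G$ relative to the kernel of $p$ so that the map becomes the projection in an almost-direct product, and then transfer the witnessing dominant cocharacter across the decomposition. First, since $G$ is connected semisimple, the identity component $(\ker p)^0$ is an almost-direct product of some of the simple factors of $G$, which yields a decomposition $G=G_1\cdot G_2$ with $G_1\supseteq(\ker p)^0$ and $p|_{G_2}\colon G_2\to\overline{G}$ a central isogeny. Central isogenies of semisimple groups induce isomorphisms of flag varieties (parabolic subgroups contain the center) and identifications of Weyl groups, and they scale the bilinear form on cocharacters without affecting positivity; so I would replace $G$ by the isogeny cover and assume $G=G_1\times G_2$, $\overline{G}=G_2$, and $p$ is the second-factor projection. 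I then choose a maximal $\R$-split torus $T=T_1\times T_2$ containing $a(t)=(a_1(t),a_2(t))$ and a minimal parabolic $B=B_1\times B_2$ with respect to which $a$ is dominant; the hypothesis forces $a_2(t)$ non-trivial, and $\overline{T}=T_2$, $\overline{B}=B_2$, $P=P_1\times P_2$ with $P_i=P(a_i)$, and $\overline{P}=P_2$.

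Next, I would identify the preimage of a Schubert variety as a Schubert variety. In the product, $W=W_1\times W_2$, $W^P=W_1^{P_1}\times W_2^{P_2}$, and each Schubert cell $B(w_1,w_2)P/P$ factors as a product, hence $X_{(w_1,w_2)}=X_{w_1}\times X_{w_2}$. The induced map $\pi\colon G/P\to\overline{G}/\overline{P}$ is the projection to $G_2/P_2$, so for $\overline{Y}=\overline{g}\,X_{\overline{w}}\subset\overline{G}/\overline{P}$ one has
\begin{equation*}
\pi^{-1}(\overline{Y})=G_1/P_1\times\overline{g}\,X_{\overline{w}}=(e,\overline{g})\cdot(X_{w_{1,\max}}\times X_{\overline{w}})=(e,\overline{g})\cdot X_{(w_{1,\max},\overline{w})},
\end{equation*}
where $w_{1,\max}$ is the longest element of $W_1^{P_1}$ (so that $X_{w_{1,\max}}=G_1/P_1$). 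This realizes $\pi^{-1}(\overline{Y})$ as a Schubert subvariety of $G/P$ in the sense of \Cref{def:Schubert_variety}.

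The final step is to transfer the witness. Given that $\overline{Y}$ is (weakly) unstable, take $\overline{\delta}\in\Gamma^+(\overline{T})$ with $(\overline{\delta},\overline{a}^{\overline{w}})>0$ (resp.\ $\geq 0$ and $\overline{\delta}\neq 0$). Via the isogeny $T_2\to\overline{T}$, I would lift $\overline{\delta}$ to some $\delta_2\in\Gamma^+(T_2)$, possibly after replacing $\overline{\delta}$ by a positive integer multiple, which preserves both signs of pairings and non-triviality. Set $\delta=(0,\delta_2)\in\Gamma(T_1)\oplus\Gamma(T_2)=\Gamma(T)$. The positive roots split as $R^+=R_1^+\sqcup R_2^+$, each restricting trivially to the opposite torus, so $\langle\delta,\alpha\rangle$ is $0$ on $R_1^+$ and non-negative on $R_2^+$; hence $\delta\in\Gamma^+(T)$. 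Since $[\mathfrak{g}_1,\mathfrak{g}_2]=0$, the factors are Killing-orthogonal and the bilinear form on $\Gamma(T)=\Gamma(T_1)\oplus\Gamma(T_2)$ is a direct sum, yielding
\begin{equation*}
(\delta,a^{(w_{1,\max},\overline{w})})=(\delta_2,a_2^{\overline{w}})=(\overline{\delta},\overline{a}^{\overline{w}}),
\end{equation*}
which is $>0$ (resp.\ $\geq 0$ with $\delta\neq 0$). As (weak) instability of $gX_w$ depends only on the Weyl element $w$, this concludes that $\pi^{-1}(\overline{Y})$ is (weakly) unstable with respect to $a(t)$.

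I expect the main obstacle to lie entirely in the reduction step of the first paragraph: carefully tracking how central isogenies interact with Schubert varieties, with $\Gamma^+$, and with the Killing-form pairing, and checking that any dominant cocharacter of $\overline{T}$ lifts to a dominant cocharacter of $T_2$ after clearing denominators. The subsequent two steps are then essentially direct computations in the product structure.
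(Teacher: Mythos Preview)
Your proposal is correct and follows essentially the same approach as the paper: decompose $G$ relative to the kernel of $p$, identify the preimage of $\overline{g}X_{\overline{w}}$ as the Schubert variety indexed by $(w_{1,\max},\overline{w})$ (the paper calls this $(w_0,\overline{w})$), and witness (weak) instability by the lifted cocharacter $(0,\overline{\delta})$ using the Killing-orthogonality of the factors. Your treatment is in fact more careful than the paper's, which simply writes $W_G=W_{G_1}\times W_{\overline{G}}$ without addressing the central isogeny between $G_2$ and $\overline{G}$; your remark about lifting $\overline{\delta}$ after clearing denominators, and the fact that positive scaling preserves the sign of the pairing, fills exactly that gap.
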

	
	\begin{proof}
		Let $X_{\overline{w}}$ be an unstable Schubert subvariety of $\overline{G}/\overline{P}$, where $\overline{w}\in W^{\overline{P}}$ such that $(\overline{\delta},\overline{a}^{\overline{w}})\geq 0$ for some $\overline{\delta}\in\Gamma^{+}(\overline{T})$. Let $G_1$ denote the kernel of $p$, and we have $W_G=W_{G_1}\times W_{\overline{G}}$. Let $w_0$ denote the unique maximal element in $W^{P_1}$. Then the preimage of $X_{\overline{w}}$ is $X_{(w_0,\overline{w})}$. Now it remains to check instability. We note that the Killing form on $\mathfrak{g}$ is the sum of the Killing forms on $\mathfrak{g}_1$ and $\overline{\mathfrak{g}}$. Hence we consider the lifted multiplicative one-parameter subgroup $(e,\overline{\delta})\in\Gamma^{+}(T)$, and use it to check that $X_{(w_0,\overline{w})}$ is unstable. \par 
		The same proof also works for weakly unstable Schubert varieties.
	\end{proof}
	
	We now proceed to the equidistribution results. Recall that $l\in L$ such that $x_0=l\Lambda$, and $\lambda_i$ are probability measures on $L/\Lambda$ as defined in \eqref{eq:def_lambda_i}. 
	
	\begin{proposition}\label{prop:twisted_limit_measure_Haar}
		Let $\phi$ be an analytic curve on $G$ such that the following two conditions hold:
		\begin{enumerate}[(a)]
			\item the image of $\widetilde{\phi}$ is not contained in any unstable Schubert variety of $G/P$ with respect to $a(t)$;
			\item For any $g\in G$ and any proper algebraic subgroup $F$ of $L$ containing $\{a(t)\}$ such that $Fgx_0$ is closed and admits a finite $F$-invariant measure, the image of $\phi$ is not contained in $P(F\cap G)g$.
		\end{enumerate}
		Suppose that $\lambda_i\to\lambda$ in the weak-* topology, then $\lambda$ is the unique $L$-invariant probability measure on $L/\Lambda$.
	\end{proposition}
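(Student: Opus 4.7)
The plan is to prove the proposition by contradiction: suppose $\lambda \neq \mu_{L/\Lambda}$. By \Cref{thm:unipotent_invariance} the limit $\lambda$ is $W$-invariant and by \Cref{thm:non_divergence} it is a probability measure. Ratner's measure classification (as recalled at the start of this section) then supplies some $H \in \mathcal{H}$ of minimal dimension with $\lambda(\pi(N(H,W)))>0$ and $\lambda(\pi(S(H,W)))=0$. If $H=L$, the final clause of the Ratner statement forces $\lambda$ to be $L$-invariant, hence $\lambda = \mu_{L/\Lambda}$, a contradiction. So from now on assume $H \subsetneq L$.

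For this choice of $H$, I would invoke \Cref{prop:algebraic_obstruction}; following its proof produces $\gamma \in \Lambda$, $g \in G$, and the algebraic subgroup $F := gl\gamma\,N_L^1(H)\,\gamma^{-1}l^{-1}g^{-1} \subseteq L$, which contains $\{a(t)\}$, has $Fgl\Lambda$ closed with a finite $F$-invariant measure, and satisfies
\[
\phi(s) \in P(F \cap G)g \qquad \text{for every } s \in I.
\]
If $F$ is a \emph{proper} subgroup of $L$, this directly contradicts hypothesis (b), finishing the argument.

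The remaining case $F = L$, which (by unpacking the definition of $N_L^1(H)$) forces $H$ to be normal in $L$ and $L$ to act trivially on $\bigwedge^{\dim H}\mathfrak{h}$, is handled by an induction on $\dim L$. In this situation Ratner's normality clause already gives that $\lambda$ itself is $H$-invariant, so it descends to a probability measure $\overline{\lambda}$ on $\overline{L}/\overline{\Lambda}$, where $\overline{L} := L/H$ and $\overline{\Lambda} := \Lambda H/H$. The plan is to verify that hypotheses (a) and (b) transfer to the quotient data $(\overline{L}, \overline{\Lambda}, \overline{G} := G/(G \cap H), \overline{a(t)}, \overline{\phi})$ and then apply the inductive hypothesis to conclude $\overline{\lambda} = \mu_{\overline{L}/\overline{\Lambda}}$; combined with the $H$-invariance of $\lambda$ this forces $\lambda = \mu_{L/\Lambda}$, the sought contradiction.

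The principal obstacle is precisely this descent step. \Cref{lem:functoriality} is exactly the tool needed for hypothesis (a) to pass through the quotient map $G \to \overline{G}$; hypothesis (b) descends because any proper subgroup $\overline{F} \subsetneq \overline{L}$ witnessing a downstairs failure of (b) pulls back to a proper subgroup of $L$ violating (b) above. One further needs to confirm that $\overline{G}$ remains semisimple and that $\overline{a(t)}$ retains non-trivial projection on each of its simple factors; this is automatic because $G \cap H$ is a normal subgroup of the semisimple group $G$ and hence is a product of simple factors, so our assumption on $a(t)$ guarantees non-trivial projection on every surviving simple factor of $\overline{G}$. Once these essentially bookkeeping checks are in place, the induction closes.
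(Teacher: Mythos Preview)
Your proposal is correct and follows essentially the same route as the paper: invoke \Cref{prop:algebraic_obstruction}, use hypothesis (b) to force $F=L$ and hence $H\lhd L$, then descend to $L/H$ and induct, checking that (a) survives via \Cref{lem:functoriality} and (b) by pulling back. The only imprecision is that the $F$ actually produced in \Cref{prop:algebraic_obstruction} is not the full conjugate of $N_L^1(H)$ but the finite-covolume subgroup $F_1$ obtained at the end of that proof; this does not affect your logic, since $F_1=L$ still forces $N_L^1(H)=L$.
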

	
	\begin{proof}
		By \Cref{prop:algebraic_obstruction}, there exists an algebraic subgroup $F$ of $L$ such that \eqref{eq:stuck_in_F} holds. Then condition (b) implies that $F=G$, and thus $G$ fixes $l\gamma p_H$. Arguing as in the proof of \cite[Theorem 5.6]{Sha09Invention}, we know that $L=N_L^1(H)$, i.e. $H$ is normal in $L$. \par 
		Now we can prove the theorem by induction on the number of simple factors in $L$. If $L$ is simple, then we have $H=L$, and $\lambda$ is $H=L$-invariant. For the inductive step, we consider the natural quotient map $p\colon L\rightarrow L/H$. For any subset $E\subset L$, let $\overline{E}$ denote its image under the quotient map. By \Cref{lem:functoriality}, $\overline{\phi(I)}$ is not contained in any unstable Schubert variety with respect to $\overline{a(t)}$. Hence $\overline{\phi}$ still satisfies condition (a). One can also verify that $\overline{\phi}$ still satisfies condition (b). Indeed, if the image of $\overline{\phi}$ is contained in $\overline{P}(F_0\cap\overline{G})\overline{g}$ for some $F_0\subsetneq\overline{L}$ such that $F_0\overline{g}\overline{x_0}$ is closed, then the image of $\phi$ is contained in $P(p^{-1}(F_0)\cap G)g$ and $p^{-1}(F_0)gx_0$ is also closed. \par 
		Now both conditions still hold for the projected curve $\overline{\phi}$. By inductive hypothesis we know that the projected measure $\overline{\lambda}$ is the $L/H$-invariant measure on $L/H\Lambda$. In addition, we already know that $\lambda$ is $H$-invariant. Therefore $\lambda$ is $L$-invariant.
	\end{proof}
	
	\begin{corollary}\label{cor:limit_measure_Haar}
		Let $\phi$ be an analytic curve satisfying (a) and (b) in \Cref{prop:twisted_limit_measure_Haar}. Let $\mu_i$ be the probability measure on $L/\Lambda$ as defined in \eqref{eq:mu_i}. Suppose that $\mu_i\to\mu$ with respect to the weak-* topology, then $\mu$ is the unique $L$-invariant probability measure on $L/\Lambda$.
	\end{corollary}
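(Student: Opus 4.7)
The plan is to reduce the statement for $\mu$ to \Cref{prop:twisted_limit_measure_Haar} applied on arbitrarily small subintervals of $I$, trading the continuously varying twist $s\mapsto\delta(s)$ for its value at a reference point on each subinterval. The key observation is the identity $a(t_i)\phi(s)x_0 = \delta(s)^{-1}\bigl(\delta(s)a(t_i)\phi(s)x_0\bigr)$, which allows integrating $f$ against $\mu_i$ to be compared with integrating a left-translate of $f$ against a localized twisted measure.

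Given $f\in C_c(L/\Lambda)$ and $\epsilon>0$, I would first use uniform continuity of $f$ to produce a neighborhood $U$ of the identity in $L$ with $|f(ux)-f(x)|<\epsilon$ for all $u\in U$ and all $x\in L/\Lambda$. Using continuity of $\delta$ on the compact interval $I$, partition $I=\bigsqcup_k J_k$ with reference points $s_k\in J_k$ so that $\delta(s)\delta(s_k)^{-1}\in U$ for every $s\in J_k$. For each $k$ introduce the probability measure
\begin{equation*}
\nu_{k,i}\colon f\longmapsto \frac{1}{|J_k|}\int_{J_k} f\bigl(\delta(s)a(t_i)\phi(s)x_0\bigr)\,\mathrm{d}s,
\end{equation*}
i.e.\ the twisted measure built from the analytic arc $\phi|_{J_k}$. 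Conditions (a) and (b) of \Cref{prop:twisted_limit_measure_Haar} are preserved under restriction to a subinterval: if the image of $\widetilde{\phi|_{J_k}}$ lay in some unstable Schubert variety (respectively in some set $P(F\cap G)g$), then by analyticity the image of $\widetilde{\phi}$ on all of $I$ would lie there too, contradicting the hypothesis for $\phi$.

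By \Cref{thm:non_divergence}, each sequence $\{\nu_{k,i}\}_{i\in\N}$ is tight, and \Cref{prop:twisted_limit_measure_Haar} applied to $\phi|_{J_k}$ shows that every weak-$\ast$ subsequential limit equals the Haar measure $\mu_{L/\Lambda}$; hence $\nu_{k,i}\to\mu_{L/\Lambda}$ as $i\to\infty$ for each $k$. Writing $y=\delta(s)a(t_i)\phi(s)x_0$ and setting $g_k=\delta(s_k)^{-1}$, the choice of $U$ gives $|f(\delta(s)^{-1}y)-f(g_ky)|<\epsilon$ for every $s\in J_k$, so summation over $k$ yields
\begin{equation*}
\Bigl|\int f\,\mathrm{d}\mu_i \;-\; \sum_k \frac{|J_k|}{|I|}\int (g_k\!\cdot\! f)\,\mathrm{d}\nu_{k,i}\Bigr|\leq \epsilon,
\end{equation*}
where $(g_k\!\cdot\! f)(x)=f(g_kx)$. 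Letting $i\to\infty$ and using $L$-invariance of $\mu_{L/\Lambda}$ to rewrite each $\int(g_k\!\cdot\! f)\,\mathrm{d}\mu_{L/\Lambda}=\int f\,\mathrm{d}\mu_{L/\Lambda}$, we conclude $\bigl|\int f\,\mathrm{d}\mu-\int f\,\mathrm{d}\mu_{L/\Lambda}\bigr|\leq\epsilon$; since $\epsilon$ is arbitrary and $f$ is arbitrary, $\mu=\mu_{L/\Lambda}$.

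The only mildly delicate point is checking that conditions (a) and (b) descend to every subinterval, which is automatic from analyticity since unstable Schubert varieties and sets of the form $P(F\cap G)g$ are closed algebraic. Beyond that, the argument is a uniform-continuity bookkeeping to replace the varying twist by a locally constant one, and no new dynamical input beyond \Cref{prop:twisted_limit_measure_Haar} and the non-divergence estimate is required.
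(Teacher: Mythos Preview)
Your approach is correct and is precisely the argument referenced by the paper (the deduction is the same partition-and-freeze-the-twist argument as in \cite[Corollary~5.7]{Sha09Invention}): subdivide $I$, replace $\delta(s)$ by $\delta(s_k)$ on each piece using uniform continuity, apply \Cref{prop:twisted_limit_measure_Haar} on each piece, and use $L$-invariance of $\mu_{L/\Lambda}$ to remove the constant twist.

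One technical point deserves more care. You assert that ``sets of the form $P(F\cap G)g$ are closed algebraic'' in order to propagate condition~(b) to subintervals by analyticity. This is not obvious and can fail: for instance, with $G=\SL_3(\R)$, $a(t)=\mathrm{diag}(t,1,t^{-1})$, $P=B^-$ the lower-triangular Borel, and $H=\{a(t)\}\ltimes\exp(\R E_{12})$, the product $PH$ is the big-cell--type set $\{g:g_{13}=g_{23}=0,\ g_{11}\neq 0\}$, which is only locally closed. So from $\phi(J_k)\subset P(F\cap G)g$ analyticity alone yields $\phi(I)\subset\overline{P(F\cap G)g}$, not containment in $P(F\cap G)g$ itself. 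The clean fix is to avoid verifying~(b) for $\phi|_{J_k}$ as a black box and instead argue by contradiction through the \emph{proof} of \Cref{prop:twisted_limit_measure_Haar}: if some subsequential limit of $\nu_{k,i}$ were not Haar, then \Cref{prop:algebraic_consequence} (applied to $\phi|_{J_k}$) produces $\gamma\in\Lambda$ with $\phi(s)l\gamma p_H\in V^{0-}(a)$ for all $s\in J_k$; this \emph{is} a closed linear condition, so analyticity extends it to all of $I$, and then the argument of \Cref{prop:algebraic_obstruction} on $I$ contradicts~(b) for $\phi$. With this adjustment your proof goes through.
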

	
	\begin{proof}
		The deduction of \Cref{cor:limit_measure_Haar} from \Cref{prop:twisted_limit_measure_Haar} is analogous to the proof of \cite[Corollary 5.7]{Sha09Invention}.
	\end{proof}
	
	Parallel to \Cref{prop:twisted_limit_measure_Haar} and \Cref{cor:limit_measure_Haar}, the following results could be proved with the same arguments.
	
	\begin{proposition}\label{prop:twisted_limit_measure_Haar1}
		Let $\phi$ be an analytic curve on $G$ such that the following two conditions hold:
		\begin{enumerate}[(A)]
			\item the image of $\widetilde{\phi}$ is not contained in any weakly unstable Schubert variety of $G/P$ with respect to $a(t)$;
			\item For any $g\in G$ and any proper algebraic subgroup $F$ of $L$ containing $\{a(t)\}$ such that $Fgx_0$ is closed and admits a finite $F$-invariant measure and that $F\cap G$ is reductive, the image of $\phi$ is not contained in $P(F\cap G)g$.
		\end{enumerate}
		Suppose that $\lambda_i\to\lambda$ in the weak-* topology, then $\lambda$ is the unique $L$-invariant probability measure on $L/\Lambda$.
	\end{proposition}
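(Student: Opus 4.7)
The proof will closely parallel that of \Cref{prop:twisted_limit_measure_Haar}, with the stronger hypothesis (A) enabling access to the ``furthermore'' clause of \Cref{prop:algebraic_obstruction}, and the consequent reductivity of $F \cap G$ aligning exactly with the more restricted obstruction list in (B).

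First, I would apply \Cref{prop:algebraic_obstruction}. Because (A) rules out even weakly unstable Schubert varieties, the proposition produces $g \in G$ and a proper algebraic subgroup $F$ of $L$ containing $\{a(t)\}$ such that $Fgx_0$ is closed with finite $F$-invariant measure, $\phi(I) \subset P(F \cap G)g$, and $F \cap G$ is reductive. Hypothesis (B) forbids any such proper $F$, hence $F = L$. Consequently $L$ stabilizes $l\gamma p_H$, and the argument of \cite[Theorem 5.6]{Sha09Invention} invoked in \Cref{prop:twisted_limit_measure_Haar} gives $L = N_L^1(H)$, so $H$ is normal in $L$ and $\lambda$ is $H$-invariant.

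The remainder is an induction on the number of simple factors of $L$, mirroring \Cref{prop:twisted_limit_measure_Haar}. In the base case $L$ is simple, so $H$ (a non-trivial normal subgroup containing $W$) equals $L$ and $\lambda$ is already $L$-invariant. For the inductive step I project $p\colon L \twoheadrightarrow \overline{L} := L/H$ and set $\overline{\phi} = p\circ\phi$. By \Cref{lem:functoriality}, $\overline{\phi}$ satisfies (A) in $\overline{L}$; to invoke the inductive hypothesis one must also verify that (B) is preserved. Suppose $\overline{\phi}(I) \subset \overline{P}(F_0 \cap \overline{G})\overline{g}$ for some proper algebraic subgroup $F_0 \subsetneq \overline{L}$ containing $\overline{\{a(t)\}}$, with $F_0 \overline{g}\overline{x_0}$ closed and $F_0 \cap \overline{G}$ reductive. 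Lifting to a suitable proper algebraic subgroup $\widetilde{F} \subset L$ with $\widetilde{F}g x_0$ closed and $\widetilde{F} \cap G$ reductive then contradicts (B). Once (A) and (B) descend to $\overline{\phi}$, the inductive hypothesis makes the projected weak-$\ast$ limit $\overline{\lambda}$ the Haar measure on $\overline{L}/p(\Lambda)$, and combined with the $H$-invariance already in hand this forces $\lambda = \mu_{L/\Lambda}$.

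The main obstacle is precisely the reductivity bookkeeping in this lifting step. The naive candidate $\widetilde{F} = p^{-1}(F_0)$ satisfies the closedness and properness requirements, but its intersection with $G$ is an extension of the reductive group $F_0 \cap \overline{G}$ by $H \cap G$, and extensions of reductive by reductive need not be reductive. However, $H \trianglelefteq L$ implies $H \cap G \trianglelefteq G$, and since $G$ is connected semisimple the identity component of $H \cap G$ is a product of simple factors of $G$, hence reductive. One then selects $\widetilde{F}$ by taking a reductive Levi of a preimage group containing $\{a(t)\}$, exploiting the reductivity of both layers to assemble a reductive $\widetilde{F} \cap G$ while keeping $\widetilde{F}gx_0$ closed; this is the only place where the ``reductive'' restriction in (B) is essential, and aside from it the argument is a verbatim transcription of the proof of \Cref{prop:twisted_limit_measure_Haar}.
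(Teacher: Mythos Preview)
Your proposal follows exactly the route the paper takes: the paper states that \Cref{prop:twisted_limit_measure_Haar1} ``could be proved with the same arguments'' as \Cref{prop:twisted_limit_measure_Haar}, and your outline reproduces that argument, invoking the ``furthermore'' clause of \Cref{prop:algebraic_obstruction} so that the $F$ produced has reductive $F\cap G$ and hence falls under (B).

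Two small corrections. First, \Cref{prop:algebraic_obstruction} does not assert that $F$ is \emph{proper}; the logic is rather that if the $F$ it produces were proper then (B) would be violated, forcing $F=L$. Second, your ``main obstacle'' is not an obstacle at all: if $N$ is a \emph{normal} reductive subgroup of $E$ and $E/N$ is reductive, then $E$ is automatically reductive, because $R_u(E)$ maps trivially to $E/N$, hence $R_u(E)\subset N$, and a connected normal unipotent subgroup of a reductive group is trivial. In the inductive step $(H\cap G)^0$ is a product of simple factors of $G$ (hence semisimple), so $p^{-1}(F_0)\cap G$ is an extension of the reductive group $F_0\cap\overline{G}$ by a normal semisimple group and is therefore reductive. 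The naive lift $\widetilde{F}=p^{-1}(F_0)$ already works, and the Levi construction you sketch is unnecessary.
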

	
	\begin{corollary}\label{cor:limit_measure_Haar1}
		Let $\phi$ be an analytic curve satisfying (A) and (B) in \Cref{prop:twisted_limit_measure_Haar1}. Let $\mu_i$ be the probability measure on $L/\Lambda$ as defined in \eqref{eq:mu_i}. Suppose that $\mu_i\to\mu$ with respect to the weak-* topology, then $\mu$ is the unique $L$-invariant probability measure on $L/\Lambda$.
	\end{corollary}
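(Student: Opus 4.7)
The plan is to deduce this corollary from \Cref{prop:twisted_limit_measure_Haar1} by partitioning $I$ into small sub-intervals on which the continuous twist $\delta(s)$ is nearly constant. The first observation I would record is that hypotheses (A) and (B) are preserved when $\phi$ is restricted to any nondegenerate sub-interval $J \subset I$: both conditions require the analytic image of $\widetilde{\phi}$ (respectively $\phi$) to avoid a certain proper closed algebraic subvariety of $G/P$ (respectively $G$), and by the identity theorem for analytic maps, the restricted image cannot lie in such a variety unless the whole image does. Consequently \Cref{thm:non_divergence} applied to $\phi|_J$ gives tightness of the corresponding twisted sub-interval measures $\lambda_i^J$, and \Cref{prop:twisted_limit_measure_Haar1} applied to $\phi|_J$ then shows that any weak-$\ast$ subsequential limit of $\lambda_i^J$ equals $\mu_{L/\Lambda}$.

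Given $f\in C_c(L/\Lambda)$ and $\eta>0$, I would partition $I$ into finitely many sub-intervals $J_1,\dots,J_N$ of length at most $\eta$ and choose $s_k\in J_k$. Inserting $\delta(s)^{-1}\delta(s)=e$ into the integrand gives
\[
\int_I f\bigl(a(t_i)\phi(s)x_0\bigr)\,ds \;=\; \sum_k \int_{J_k} f\bigl(\delta(s)^{-1}\cdot \delta(s)a(t_i)\phi(s)x_0\bigr)\,ds.
\]
Uniform continuity of $f$ on $L/\Lambda$, combined with uniform continuity of $\delta$ on the compact interval $I$, lets me replace $\delta(s)^{-1}$ by the constant $\delta(s_k)^{-1}$ on each $J_k$ at a total error bounded by $\omega(\eta)\cdot |I|$, where $\omega(\eta)\to 0$ as $\eta\to 0$. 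A diagonal extraction produces a single subsequence along which $\lambda_i^{J_k}\to \mu_{L/\Lambda}$ for every $k$ simultaneously; along this same subsequence one still has $\mu_i\to\mu$. Passing to the limit and using left $L$-invariance of $\mu_{L/\Lambda}$ to cancel each twist $L_{\delta(s_k)^{-1}}$,
\[
\lim_{i\to\infty}\sum_k \frac{|J_k|}{|I|}\int \bigl(f\circ L_{\delta(s_k)^{-1}}\bigr)\,d\lambda_i^{J_k} \;=\; \sum_k \frac{|J_k|}{|I|}\int f\,d\mu_{L/\Lambda} \;=\; \int f\,d\mu_{L/\Lambda}.
\]
Combining with $\mu_i\to\mu$ yields $\bigl|\int f\,d\mu - \int f\,d\mu_{L/\Lambda}\bigr| \le \omega(\eta)$, and letting $\eta\to 0$ gives $\mu=\mu_{L/\Lambda}$.

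The main technical nuisance is the diagonal subsequence bookkeeping: one must extract a single subsequence along which all $N$ of the $\lambda_i^{J_k}$ converge to $\mu_{L/\Lambda}$. Since for each fixed $\eta$ the partition is finite, this is routine; moreover, because the candidate limit $\mu_{L/\Lambda}$ is independent of the extraction, the estimate $|\int f\,d\mu-\int f\,d\mu_{L/\Lambda}|\le\omega(\eta)$ transfers back to the original sequence. Every other ingredient is a uniform-continuity estimate plus \Cref{prop:twisted_limit_measure_Haar1}, so the corollary genuinely follows by the same argument as \Cref{cor:limit_measure_Haar}.
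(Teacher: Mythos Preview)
Your proof is correct and follows the same route the paper indicates (by deferring to \cite[Corollary~5.7]{Sha09Invention}): partition $I$ into short sub-intervals, freeze the twist $\delta(s)^{-1}$ on each piece, apply \Cref{prop:twisted_limit_measure_Haar1} to each restricted curve, and absorb the frozen twist using the $L$-invariance of $\mu_{L/\Lambda}$. The only point to handle with a little care is that $\delta$ is analytic only off a finite exceptional set in $I$ (see \Cref{sect:unipotent_invariance}), so uniform continuity on all of $I$ is not literally available; excising arbitrarily short neighborhoods of those finitely many points before partitioning fixes this at the cost of an extra $O(\eta)\lVert f\rVert_\infty$ error, which is routine and does not disturb your argument.
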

	
	Now we are ready to prove the main theorems in \Cref{subsect:equidistribution}.
	
	\begin{proof}[Proof of \Cref{thm:equidistribution0}]
		If \eqref{eq:equidistribution0} fails to hold, then there exist $\epsilon>0$ and a sequence $t_i\to\infty$ such that for each $i$,
		\begin{equation}
		\left\vert \frac{1}{b-a}\int_{a}^{b}f(a(t_i)\phi(s)x_0)\,\mathrm{d}s - \int_{L/\Lambda}f\,\mathrm{d}\mu_{L/\Lambda}
		\right\vert \geq \epsilon.
		\end{equation}
		In view of \eqref{eq:mu_i} and \Cref{cor:weak_limit_exists}, this statement contradicts \Cref{cor:limit_measure_Haar}.
	\end{proof}
	
	\begin{proof}[Proof of \Cref{thm:equidistribution1}]
		If \eqref{eq:equidistribution1} fails to hold, then there exist $\epsilon>0$ and a sequence $t_i\to\infty$ such that for each $i$,
		\begin{equation}
		\left\vert \frac{1}{b-a}\int_{a}^{b}f(a(t_i)\phi(s)x_0)\,\mathrm{d}s - \int_{L/\Lambda}f\,\mathrm{d}\mu_{L/\Lambda}
		\right\vert \geq \epsilon.
		\end{equation}
		In view of \eqref{eq:mu_i} and \Cref{cor:weak_limit_exists}, this statement contradicts \Cref{cor:limit_measure_Haar1}.
	\end{proof}

	\section{Grassmannians and Schubert varieties}\label{sect:Grassmannian_Schubert}
	In this section we consider the special case where $G = L = \SL_{m+n}(\R)$, and $\Lambda = \SL_{m+n}(\Z)$. Define
	\[
	a(t) = 
	\begin{bmatrix}
	t^nI_m & \\
	& t^{-m}I_n
	\end{bmatrix}.
	\]
	Then $\{a(t)\}$ is a multiplicative one-parameter subgroup of $G$. In this section, all the unstable and weakly unstable Schubert varieties are with respect to this $a(t)$. Let $P$ be the parabolic subgroup associated with $\{a(t)\}$. We have
	\begin{equation}
	P = \left\{ 
	\begin{bmatrix}
	A & \mathbf{0}\\
	C & D
	\end{bmatrix}\in\SL_{m+n}(\R) \colon
	A\in M_{m\times m}(\R),\,C\in M_{n\times m}(\R),\,D\in M_{n\times n}(\R)
	\right\}.
	\end{equation}
	Hence the partial flag variety $G/P$ coincide with $\mathrm{Gr}(m,m+n)$, the Grassmannian of $m$-dimensional subspaces of $\R^{m+n}$. It is an irreducible projective variety of dimension $mn$.\par
	
	\subsection{Schubert cells and Schubert varieties}\label{subsect:Schubert}
	Let $B$ be the Borel subgroup of lower triangular matrices in $G$, and $T$ the group of diagonal matrices in $G$. The Weyl group $W=N_G(T)/Z_G(T)$ is isomorphic to $S_{m+n}$, the permutation group on $m+n$ elements. The Weyl group $W_P$ of $P$ is isomorphic to $S_{m}\times S_{n}$, and the set $W^P$ of minimal length coset representatives of $W/W_P$ consists of the permutations $w=(w_1,\cdots,w_{m+n})$ such that $w_1<\cdots<w_m$ and $w_{m+1}<\cdots<w_{m+n}$. We identify $w$ in $W^P$ with the subset $I_w=\{w_1, \cdots, w_m \}$ of $\{1,2,\cdots, m+n \}$. The cosets $wP$ are exactly the $T$-fixed points of $G/P$. The Schubert cell $C_w$ is by definition $BwP$, and the Schubert variety $X_w$ is defined to be $\overline{BwP}$, the closure of $C_w$ in $G/P$. For $w,w'\in W^P$, $w'\in X_w$ if and only if $w'\leq w$ in the Bruhat order. We note that the Bruhat order here is the order on the tuples $(w_1,\cdots ,w_m)$ given by
	\[
	(w_i)\leq (v_i) \iff w_i\leq v_i,\forall 1\leq i\leq m.
	\]
	The dimension of $X_w$ is given by $l(w)$, which equals $\sum_{k=1}^{m} (w_k-k)$.\par
	
	The definitions above coincide with the classical definitions. For $1\leq k \leq m+n$, let $F_k$ be the standard $k$-dimensional subspace of $\R^{m+n}$ spanned by $\{e_1,\cdots, e_k \}$. We have the complete flag of subspaces
	\begin{equation}
	0=F_0\subset F_1\subset F_2\cdots\subset F_{m+n-1}\subset F_{m+n}=\R^{m+n}.
	\end{equation}
	For an $m$-dimensional subspace $V\in\mathrm{Gr}(m,m+n)$ of $\R^{m+n}$, consider the intersections of the subspace with the flag:
	\begin{equation}
	0\subset (F_1\cap V)\subset (F_2\cap V)\cdots\subset (F_{m+n-1}\cap V)\subset W.
	\end{equation}
	For $w\in W^P$, we have a tuple $(w_1,\cdots, w_m)$, and the Schubert cell $C_w$ has the following description:
	\begin{equation}
	C_w = \left\{V\in\mathrm{Gr}(m,m+n)\colon\dim(V\cap F_{w_{k}})=k;\;\dim(V\cap F_l)<k,\,\forall l<w_k \right\}.
	\end{equation}
	In other words, the tuple $(w_1,\cdots, w_m)$ gives the indices where the dimension jumps.\par
	Similarly, the Schubert variety $X_w$ has the following description:
	\begin{equation}
	X_w = \left\{V\in\mathrm{Gr}(m,m+n)\colon\dim(V\cap F_{w_{k}})\geq k,\; 1\leq k\leq m \right\}.
	\end{equation}\par
	Now it is easy to see that
	\begin{equation}
	X_w = \bigsqcup_{w'\leq w}C_{w'}.
	\end{equation}
	Hence the Schubert cells give a stratification of the Grassmannian variety.\par
	
	\begin{example}\label{ex:Schubert}
		\begin{enumerate}[(1)]
			\item For $m = 1$, the Grassmannian $\mathrm{Gr}(1,n)$ is just the projective space $\R\mathbb{P}^n$, and the Schubert varieties form a flag of linear subspaces $X_0\subset X_1\subset\cdots\subset X_n$, where $X_j\cong \R\mathbb{P}^{j}$.
			\item For $m=n=2$ one gets the following poset of Schubert varieties in $\mathrm{Gr}(2,4)$:
			\begin{equation}
			\begin{tikzcd}
			&X_{34} \arrow[d, dash] \\
			&X_{24} \arrow[dr, dash] \\
			X_{14} \arrow[ur, dash] &  &X_{23} \arrow[dl, dash] \\
			&X_{13} \arrow[ul, dash] \\
			&X_{12} \arrow[u, dash]
			\end{tikzcd}
			\end{equation}
			where $X_{12}$ is one single point, and $X_{34}$ is $\mathrm{Gr}(2,4)$.
		\end{enumerate}
	\end{example}
	
	\subsection{Pencils}
	The main goal of this section is to show that maximal (weakly) constraining pencils coincide with maximal (weakly) unstable Schubert varieties in the Grassmannian case, and hence the latter is a natural generalization to all partial flag varieties.\par 
	
	Given a real vector space $W \subsetneq \R^{m+n}$, and an integer $r\leq m$, we recall from \Cref{def:pencil} that the pencil $\mathfrak{P}_{W,r}$ is the set 
	\[
	\{V\in\mathrm{Gr}(m,m+n)\colon \mathrm{dim}(V\cap W)\geq r\}.
	\]
	Denote $d=\dim W$. Let $w\in W^P$ be the element such that $(w_1, \cdots, w_m)$ is the tuple
	\[
	(d-r+1, \cdots, d,r+1,\cdots, m).
	\]
	One can verify that the pencil $\mathfrak{P}_{W,r}$ is the Schubert variety $gX_w$, where $g$ is an element in $\SL_{m+n}(\R)$ such that $W = g\cdot F_d$. The pencil is called constraining (resp. weakly constraining) if the inequality \eqref{eq:constraining_pencil} (resp. \eqref{eq:weakly_constraining_pencil}) holds. \par 
	
	On the other hand, we recall that the Schubert variety $X_w$ is unstable (resp. weakly unstable) if there exists a non-trivial multiplicative one-parameter subgroup $\delta$ in $\Gamma^+(T)$ such that $(\delta, a^w)>0$ (resp. $\geq 0$). Let $\Delta$ be the element in the Lie algebra $\mathfrak{t}$ of $T$ such that $\delta(t) = \exp(\log t\cdot\Delta)$. Then $\Delta$ could be written as $\mathrm{diag}(t_1,t_2,\cdots, t_{m+n})$, where $t_1\geq t_2 \geq\cdots\geq t_{m+n}$ and $\sum t_i=0$. Hence in the case of Grassmannian we have the following criterion of stability.
	
	\begin{lemma}\label{lem:Schubert_stability_criterion}
		Let $w$ be an element in $W^P$, then the corresponding Schubert variety $X_w$ is unstable (resp. weakly unstable) if and only if the following system is soluble:
		\begin{gather}
		t_1\geq\cdots\geq t_k>0\geq t_{k+1}\geq\cdots\geq t_{m+n}   \label{eq:order_t}  \\  
		\sum_{i=1}^{m+n} t_i=0  \label{eq:sum_equals_zero} \\
		\sum_{j=1}^{m} t_{w_j} > 0 \;(\text{resp. }\sum_{j=1}^{m} t_{w_j} \geq 0) \label{eq:combinatorial_inequality}
		\end{gather}
	\end{lemma}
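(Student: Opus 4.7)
The plan is to unwind \Cref{def:Schubert_variety} in concrete coordinates on the maximal torus of $\SL_{m+n}(\R)$ and show that the pairing $(\delta, a^w)$ simplifies, up to a positive constant, to precisely $\sum_{j=1}^m t_{w_j}$. The entire proof is then a direct translation of definitions; no deep ingredients are used.

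First I would identify a multiplicative one-parameter subgroup $\delta\in\Gamma(T)$ with its Lie algebra generator $\Delta=\mathrm{diag}(t_1,\ldots,t_{m+n})$, where $\sum_i t_i=0$ since $\delta$ lands in $\SL_{m+n}$. Dominance with respect to the Borel $B$ of lower triangular matrices corresponds to $t_1\geq t_2\geq\cdots\geq t_{m+n}$; together with $\sum t_i=0$, this automatically forces a sign change at some index $k$, giving \eqref{eq:order_t}. Although $\Gamma(T)$ consists of integer-valued cocharacters, the conditions $(\delta,a^w)>0$ and $(\delta,a^w)\geq 0$ are invariant under positive real scaling, so a real solution of the system can always be rationalized and cleared of denominators to yield an honest $\delta\in\Gamma^+(T)$.

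Next I would compute the action of $w\in W^P\subset S_{m+n}$ on $a$. The Lie algebra generator of $a$ is $\mathrm{diag}(n,\ldots,n,-m,\ldots,-m)$ with $m$ copies of $n$ followed by $n$ copies of $-m$; conjugation by $w$ moves the diagonal entry at position $i$ to position $w(i)$, so $a^w$ has entry $n$ at the positions $I_w=\{w_1,\ldots,w_m\}$ and $-m$ at the complementary positions. The bilinear form $(\cdot,\cdot)$ is a positive multiple of the trace form on the diagonal, so the sign of the pairing is determined by
\[
\sum_{i=1}^{m+n} t_i\cdot (a^w)_{ii}=n\sum_{j\in I_w}t_j-m\sum_{j\notin I_w}t_j=(m+n)\sum_{j=1}^{m}t_{w_j},
\]
where the last equality uses $\sum_i t_i=0$. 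Hence $(\delta,a^w)>0$ (respectively $\geq 0$) is equivalent to \eqref{eq:combinatorial_inequality}.

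Putting these observations together, the existence of a dominant $\delta\in\Gamma^+(T)$ with $(\delta,a^w)>0$ (respectively a non-trivial one with $(\delta,a^w)\geq 0$) is equivalent to the solvability of the system \eqref{eq:order_t}--\eqref{eq:combinatorial_inequality} with strict (respectively non-strict) inequality in \eqref{eq:combinatorial_inequality}. There is no real obstacle here; the only bookkeeping point to be careful about is that in the weakly unstable case one must insist that $\delta$ be non-trivial, which corresponds to the implicit requirement that not all $t_i$ vanish in \eqref{eq:combinatorial_inequality}, and that one can pass freely between integer cocharacters and real solutions by clearing denominators after rationalization.
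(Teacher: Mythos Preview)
Your proposal is correct and is essentially the same unwinding-of-definitions that the paper relies on: the paper itself gives no separate proof of this lemma, only the preceding paragraph identifying $\delta$ with $\Delta=\mathrm{diag}(t_1,\ldots,t_{m+n})$ subject to $t_1\geq\cdots\geq t_{m+n}$ and $\sum t_i=0$, and then states the lemma as an immediate consequence. Your write-up supplies the details the paper leaves implicit (the explicit form of $a^w$, the simplification of the pairing to $(m+n)\sum_{j=1}^m t_{w_j}$ via $\sum t_i=0$, and the passage between real and integral cocharacters), but there is no difference in approach.
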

	
	\begin{example}[$m=n=2$]
		We continue with \Cref{ex:Schubert}(2). If $w=(14)$, then we can take $t_1 = 3, t_2=t_3=t_4=-1$, which gives $t_1+t_4>0$. Hence by \Cref{lem:Schubert_stability_criterion} we have $X_{14}$ is unstable. Similarly we can show that $X_{23}$ is unstable by taking $t_1=t_2=t_3=1, t_4=-3$.\par 
		When $w=(24)$, $t_2+t_4 \geq 0$ is soluble as we can take $t_1=t_2=1,t_3=t_4=-1$. However, $t_2+t_4>0$ is insoluble. Indeed, suppose $t_2+t_4>0$, then $t_1+t_3\geq t_2+t_4>0$, and it follows that $t_1+t_2+t_3+t_4>0$, which contradicts \eqref{eq:sum_equals_zero}. Therefore we conclude that $X_{24}$ is weakly unstable but not unstable.
	\end{example}
	
	Now we are ready for the main results of this section.
	
	\begin{proposition}\label{prop:pencil_in_Schubert}
		Every constraining (resp. weakly constraining) pencil is an unstable (resp. weakly unstable) Schubert variety of $\mathrm{Gr}(m,m+n)$.
	\end{proposition}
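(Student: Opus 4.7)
The plan is to apply Lemma~\ref{lem:Schubert_stability_criterion} directly to the Schubert variety $gX_w$ identified at the beginning of the subsection. Since (weak) instability depends only on $w$ and not on the translate by $g$, it suffices to exhibit a non-trivial $\delta \in \Gamma^{+}(T)$ satisfying $(\delta, a^w) > 0$ (resp.\ $(\delta, a^w) \geq 0$) whenever $\mathfrak{P}_{W,r}$ is constraining (resp.\ weakly constraining). Here $d = \dim W$ and $w\in W^P$ is the tuple
\[
(w_1, \ldots, w_m) \;=\; (d-r+1, \ldots, d,\; n+r+1, \ldots, n+m),
\]
where non-emptiness of the pencil forces $d \leq n+r$ (otherwise $\dim(V\cap W)\geq r$ is impossible by dimension count), which is what makes this tuple strictly increasing.

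The heart of the argument is a one-parameter ansatz. Writing $\delta = \mathrm{diag}(t_1, \ldots, t_{m+n})$, I propose the two-step profile $t_i = \alpha$ for $i \leq d$ and $t_i = -\beta$ for $i > d$, with $\alpha > 0$ arbitrary and $\beta = d\alpha/(m+n-d) > 0$ chosen so that \eqref{eq:sum_equals_zero} holds. The ordering condition \eqref{eq:order_t} (with $k=d$) is then automatic, and $\delta$ is non-trivial. Because the first $r$ entries of $w$ all lie in $\{d-r+1, \ldots, d\}$ and the remaining $m-r$ entries all exceed $d$, one computes
\[
\sum_{j=1}^{m} t_{w_j} \;=\; r\alpha - (m-r)\beta \;=\; \frac{\alpha}{m+n-d}\bigl[\,r(m+n) - dm\,\bigr],
\]
whose sign coincides with the sign of $(\delta, a^w)$ via Lemma~\ref{lem:Schubert_stability_criterion}.

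The remaining arithmetic is trivial: $r(m+n) - dm > 0$ is equivalent to $\dim W / r < (m+n)/m$, so the bracket is strictly positive precisely when $\mathfrak{P}_{W,r}$ is constraining, and nonnegative precisely when it is weakly constraining. Either conclusion provides the certificate required by Definition~\ref{def:Schubert_variety}. I do not foresee any genuine obstacle in this argument; the only small subtlety is verifying that the above tuple $w$ really does describe the Schubert variety $gX_w$ equal to $\mathfrak{P}_{W,r}$, which follows routinely from the Schubert-cell description recalled in \Cref{subsect:Schubert} together with the elementary dimension bound $\dim(V \cap F_{n+k}) \geq k$ that holds for every $V \in \mathrm{Gr}(m,m+n)$ and every $k$.
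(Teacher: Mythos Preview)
Your argument is correct and is essentially the paper's own proof: the paper chooses the specific two-step profile $t_1=\cdots=t_d=m+n-d$, $t_{d+1}=\cdots=t_{m+n}=-d$, which is your ansatz with $\alpha=m+n-d$, and then performs the same computation $\sum_j t_{w_j}=r(m+n)-md$.

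One small correction: your parenthetical justification for $d\leq n+r$ is backwards. If $d>n+r$ then for every $V\in\mathrm{Gr}(m,m+n)$ one has $\dim(V\cap W)\geq m+d-(m+n)=d-n>r$, so the pencil is the \emph{entire} Grassmannian rather than empty. The inequality $d\leq n+r$ nevertheless does hold for any (weakly) constraining pencil: from $md\leq r(m+n)$ one gets $m(d-r)\leq rn\leq mn$, hence $d-r\leq n$. So the tuple is well-defined and your argument goes through unchanged.
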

	
	\begin{proof}
		Let $\mathfrak{P}_{W,r}$ be a constraining pencil, and thus by definition we have
		\begin{equation}\label{eq:constraining_pencil_recall}
		\frac{d}{r} < \frac{m+n}{m},
		\end{equation}
		where $d = \dim W$. Then $\mathfrak{P}_{W,r}=gX_w$, where $g\in G$ and $w\in W^P$ such that
		\begin{equation}
		(w_1,\cdots, w_m) = (d-r+1,\cdots,d,n+r+1,\cdots,m+n).
		\end{equation}
		Now set $t_1 = \cdots = t_d = m + n - d$ and $t_{d+1} = \cdots = t_{m+n} = -d$. It is clear that \eqref{eq:order_t} and \eqref{eq:sum_equals_zero} are satisfied. Moreover,
		\begin{equation}
		\begin{split}
		\sum_{j=1}^{m} t_{w_j} &= r(m+n-d) - (m-r)d\\
		&= r(m+n) - md\\
		&= mr\left(\frac{m+n}{m}-\frac{d}{r}\right)\\
		&> 0.
		\end{split}
		\end{equation}
		Hence \eqref{eq:combinatorial_inequality} also holds. Therefore, by \Cref{lem:Schubert_stability_criterion} we conclude that $\mathfrak{P}_{W,r}$ is an unstable Schubert variety. The same proof also works for weakly constraining pencils.
	\end{proof}
	
	\begin{proposition}\label{prop:Schubert_in_pencil}
		Every unstable (resp. weakly unstable) Schubert variety of $\mathrm{Gr}(m,m+n)$ is contained in a constraining (resp. weakly constraining) pencil.
	\end{proposition}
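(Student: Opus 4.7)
My strategy is to reduce to the case $g = e$ and then produce a concrete pencil. Since the $G$-action sends $\mathfrak{P}_{F_d,r}$ to $\mathfrak{P}_{gF_d,r}$ (preserving the ratio $\dim W / r$), it suffices to handle a standard Schubert variety $X_w$ for some $w \in W^P$. The natural candidate pencil is parametrized by the dimension $d$: for any $d \in \{1,\dots,m+n-1\}$, set $r_d := \#\{j : w_j \leq d\}$. The containment $X_w \subset \mathfrak{P}_{F_d, r_d}$ is then automatic from the description of $X_w$ in Subsection~6.1: if $V \in X_w$, then $\dim(V \cap F_{w_{r_d}}) \geq r_d$, and since $w_{r_d} \leq d$ (by definition of $r_d$), one gets $\dim(V \cap F_d) \geq r_d$. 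So the problem reduces to finding some $d$ with $r_d(m+n) > md$ in the unstable case (resp.\ $\geq$ in the weakly unstable case).

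To find such a $d$, I will decompose the instability data in the basis of fundamental coweights. By \Cref{lem:Schubert_stability_criterion}, instability furnishes a non-zero vector $t=(t_1,\dots,t_{m+n})$ with $t_1\geq\cdots\geq t_{m+n}$, $\sum_i t_i = 0$, and $\sum_{j=1}^m t_{w_j} > 0$. For $d = 1,\dots,m+n-1$, define $\omega_d \in \R^{m+n}$ by $\omega_d(i) = 1 - d/(m+n)$ for $i \leq d$ and $\omega_d(i) = -d/(m+n)$ for $i > d$; setting $c_d := t_d - t_{d+1} \geq 0$, a direct check on consecutive differences gives $t = \sum_{d=1}^{m+n-1} c_d \omega_d$. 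A short computation using $\sum_{j=1}^{m}\omega_d(w_j) = r_d - md/(m+n)$ then yields
\[
\sum_{j=1}^{m} t_{w_j} \;=\; \frac{1}{m+n}\sum_{d=1}^{m+n-1} c_d\bigl(r_d(m+n) - md\bigr).
\]
Strict positivity of the left side combined with $c_d \geq 0$ forces some $d$ with $r_d(m+n) - md > 0$; since $md \geq 1$ this also forces $r_d \geq 1$. Hence $\mathfrak{P}_{F_d, r_d}$ is a constraining pencil containing $X_w$, as desired.

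For the weakly unstable case, the same identity gives $\sum_d c_d(r_d(m+n) - md) \geq 0$ with $c_d \geq 0$ and not all zero (since $t \neq 0$). If every $d$ with $c_d > 0$ satisfied $r_d(m+n) - md < 0$, the sum would be strictly negative; hence some $d$ with $c_d > 0$ satisfies $r_d(m+n) - md \geq 0$, producing a weakly constraining pencil $\mathfrak{P}_{F_d, r_d}$ containing $X_w$. I expect the main step to be the combinatorial identity that expresses $\sum_{j=1}^{m} t_{w_j}$ as a nonnegative combination, indexed by $d$, of the quantities $r_d(m+n) - md$: once this bridge between the Lie-theoretic instability datum and the pencil inequality is in place, the conclusion is essentially automatic. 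No other serious obstacles are anticipated; the reduction to $g = e$ and the combinatorial containment $X_w \subset \mathfrak{P}_{F_d, r_d}$ are routine, and the validity $1 \leq d \leq m+n-1$, $1 \leq r_d \leq m$ follows from the construction.
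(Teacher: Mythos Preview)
Your proof is correct and takes essentially the same route as the paper: both establish the identity $(m+n)\sum_{j}t_{w_j}=\sum_{d=1}^{m+n-1}(t_d-t_{d+1})\bigl((m+n)r_d-md\bigr)$ and then extract a suitable $d$ from positivity of the left-hand side together with $t_d-t_{d+1}\ge 0$. The only cosmetic difference is packaging---the paper writes this as Abel summation applied to the step function $g(i)=n\cdot\mathbf{1}_{I_w}(i)-m\cdot\mathbf{1}_{I_w^c}(i)$ and argues by contradiction, whereas you write it as the expansion of $t$ in fundamental coweights $\omega_d$ and argue directly---but the partial sums $\sum_{j\le d}g(j)$ in the paper are exactly your $(m+n)r_d-md$, so the computations coincide.
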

	
	\begin{proof}
		Let $X_w$ be an unstable Schubert variety and consider the set $I_w=\{w_1,\cdots, w_m\}$. Let $J_w$ be the subset of $I_w$ consisting of the elements with jump, that is, $w_k$ is contained in $J_w$ if and only if $w_{k+1}-w_{k}>1$. Here we set $w_{m+1}=0$. Notice that for any $w_k\in J_w$, if we set $W=F_{w_k}$ and $r=k$, then $X_w$ is contained in the pencil $\mathfrak{P}_{W,r}$. Now it suffices to show that there exists $w_k\in J_w$ such that
		\begin{equation}\label{eq:slope_inequality}
		\frac{w_k}{k} < \frac{m+n}{m}.
		\end{equation}
		Actually, the function $k\mapsto{w_k}/{k}$ achieves its minimum at some $k$ such that $w_k\in J_w$. Hence it suffices to prove the following claim.
		\begin{claim}
			There exists $1\leq k \leq m$ such that \eqref{eq:slope_inequality} holds.
		\end{claim}
		\par 
		We prove the claim by contradiction. Suppose that for any $1\leq k \leq m$ we have 
		\begin{equation}\label{eq:contradiction_slope_inequality}
		\frac{w_k}{k} \geq \frac{m+n}{m}.
		\end{equation}\par 
		For $1\leq i\leq m+n$, consider the auxiliary function
		\begin{equation}
		g(i) =
		\begin{cases}
		-m & i\notin I_w;\\
		n & i\in I_w.
		\end{cases}
		\end{equation}
		For any $1\leq i< m+n$, let $w_k$ be the largest element in $I_w$ such that $w_k\leq i$ (and set $w_k=0$ if $i<w_1$). As a consequence of \eqref{eq:contradiction_slope_inequality}, we have
		\begin{equation}
		\begin{split}
		\sum_{j=1}^{i}g(i) &\leq \sum_{j=1}^{w_k}g(i)\\
		&= -m(w_k-k) + nk\\
		&=(m+n)k-mw_k\\
		&\leq 0.\quad\text{By }\eqref{eq:contradiction_slope_inequality}
		\end{split}
		\end{equation}
		It is also clear that
		\begin{equation}
		\sum_{j=1}^{m+n}g(i) = 0.
		\end{equation}\par 
		Since $X_w$ is unstable, we may find $t_1,\cdots,t_{m+n}$ satisfying \eqref{eq:order_t}\eqref{eq:sum_equals_zero}\eqref{eq:combinatorial_inequality}. Denote
		\begin{gather}
		A = \sum_{i\in I_w}t_i;\\
		B = \sum_{i\notin I_w}t_i.
		\end{gather}
		Then $A>0$ and $A+B=0$ by \eqref{eq:sum_equals_zero}\eqref{eq:combinatorial_inequality}. Hence $B<0$, and $nA-mB>0$.\par 
		On the other hand, summation by parts leads to
		\begin{equation}
		\begin{split}
		nA-mB &= n\sum_{i\in I_w}t_i - m\sum_{i\notin I_w}t_i\\
		&= \sum_{i=1}^{m+n}g(i)t_i\\
		&= \sum_{i=1}^{m+n-1}\left[(t_i-t_{i+1})\sum_{j=1}^{i}g(j)\right]+t_{m+n}\sum_{j=1}^{m+n}g(j)\\
		&= \sum_{i=1}^{m+n-1}\left[(t_i-t_{i+1})\sum_{j=1}^{i}g(j)\right]\\
		&\leq 0.
		\end{split}
		\end{equation}
		This is a contradiction.\par 
		Therefore we have proved the claim, and thus $\mathfrak{P}_{W,r}$ is a constraining pencil containing the Schubert variety $X_w$. The same proof works for weakly unstable Schubert varieties.
	\end{proof}
	
	Combining \Cref{prop:pencil_in_Schubert} and \Cref{prop:Schubert_in_pencil}, we conclude the following.
	
	\begin{theorem}\label{thm:pencil_equal_Schubert}
		Let $E$ be any subset of $\mathrm{Gr}(m,m+n)\cong G/P$. Then $E$ is contained in an unstable (resp. weakly unstable) Schubert variety with respect to $a(t)$ if and only if $E$ is contained in a constraining (resp. weakly constraining) pencil.
	\end{theorem}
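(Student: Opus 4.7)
The plan is to deduce \Cref{thm:pencil_equal_Schubert} as an immediate combination of the two previous propositions \Cref{prop:pencil_in_Schubert} and \Cref{prop:Schubert_in_pencil}, which together establish a tight correspondence between (weakly) constraining pencils and (weakly) unstable Schubert varieties in $\mathrm{Gr}(m,m+n)$.

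For the direction ``contained in a (weakly) constraining pencil implies contained in a (weakly) unstable Schubert variety'', I would apply \Cref{prop:pencil_in_Schubert} directly: every (weakly) constraining pencil is itself a (weakly) unstable Schubert variety, so if $E \subset \mathfrak{P}_{W,r}$ then $E$ already lies in a Schubert variety of the correct stability type, with no further work.

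For the converse, suppose $E \subset gX_w$ with $gX_w$ (weakly) unstable. The first step is to reduce to the standard Schubert variety $X_w$ by noting that the $G$-action on $\mathrm{Gr}(m,m+n)$ carries pencils to pencils of the same type: from the definition one checks $g\mathfrak{P}_{W,r} = \mathfrak{P}_{gW,r}$, and $\dim(gW) = \dim W$, so the inequality $\dim W / r < (m+n)/m$ (respectively $\leq$) is preserved under translation by $g$. Then \Cref{prop:Schubert_in_pencil} provides a (weakly) constraining pencil $\mathfrak{P}_{W_0,r_0}$ containing $X_w$, and translating back yields
\begin{equation*}
E \subset gX_w \subset g\mathfrak{P}_{W_0,r_0} = \mathfrak{P}_{gW_0,r_0},
\end{equation*}
which is again a (weakly) constraining pencil.

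There is no genuine obstacle at this stage; the substantive work has been absorbed into the two propositions, whose proofs rely on the numerical criterion \Cref{lem:Schubert_stability_criterion} together with the summation-by-parts estimate locating a jump index $w_k \in J_w$ satisfying $w_k/k < (m+n)/m$. The only extra observation needed beyond invoking the two propositions is the $G$-equivariance of pencils noted above, which is immediate from linearity of the action.
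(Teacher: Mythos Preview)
Your argument is correct and follows the paper's approach: the theorem is simply the combination of \Cref{prop:pencil_in_Schubert} and \Cref{prop:Schubert_in_pencil}. The only point you make explicit that the paper leaves tacit is the $G$-equivariance $g\mathfrak{P}_{W,r}=\mathfrak{P}_{gW,r}$ needed to pass from the standard $X_w$ treated in the proof of \Cref{prop:Schubert_in_pencil} to a general translate $gX_w$; this is harmless and arguably clarifying.
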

	
	\subsection{Young diagrams}
	In this section, we will give a combinatorial description of pencils and (weakly) constraining pencils, using Young diagrams. This will enable us to quickly see whether a Schubert variety is a pencil, and whether a pencil is (weakly) constraining. The readers are referred to Fulton's book \cite{Ful97} for more details.\par 
	
	A \emph{partition} is a sequence of integers $\lambda=(\lambda_1,\cdots,\lambda_m)$ such that $\lambda_1\geq\cdots\geq\lambda_m\geq 0$. Let $\Pi_{m,n}$ denote the set of partitions such that $\lambda_1 \leq n$. A \emph{Young diagram} is a set of boxes arranged in a left justified array, such that the row lengths weakly decrease from top to bottom. To any partition $\lambda$ we associate the Young diagram $D_{\lambda}$ whose $i$-th row contains $\lambda_i$ boxes. An \emph{outside corner} of the Young diagram $D_{\lambda}$ is a box in $D_{\lambda}$ such that removing the box we still get a Young diagram.
	
	\begin{example}\label{ex:Young_diagram}
		Let $m = 3$, $n = 5$, and $\lambda=(4,3,1)\in\Pi_{m,n}$. The Young diagram $D_{\lambda}$ fits inside an $m\times n$ rectangle.
		\[ 
		\ytableausetup{nosmalltableaux}
		\ytableausetup{centertableaux}
		\ytableaushort
		{\none\none\none\bullet, \none\none\bullet,\bullet}
		*{5,5,5}
		*[*(yellow)]{4,3,1}
		\]
		There are three outside corners, which are marked with a dot in the diagram.
	\end{example}\par
	
	Given $\lambda\in\Pi_{m,n}$, the associated Schubert variety $X_{\lambda}\subset\mathrm{Gr}(m,m+n)$ is defined by the conditions
	\begin{equation}
	\dim(V\cap F_{n+i-\lambda_i}) \geq i,\quad 1\leq i\leq m.
	\end{equation}
	Actually we only need outside corners to define $X_{\lambda}$; the pairs $(i,\lambda_i)$ which are not outside corners are redundant. (See \cite[Exercise 9.4.18]{Ful97}.) Therefore, we have the following lemma.
	
	\begin{lemma}
		Given $\lambda\in\Pi_{m,n}$, the Schubert variety $X_\lambda$ is a pencil if and only if the Young diagram $D_{\lambda}$ has only one outside corner.
	\end{lemma}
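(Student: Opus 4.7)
The plan is to translate the pencil condition into Schubert-variety language, using the dictionary $\mu_i \leftrightarrow n+i-w_i$ together with the fact cited from Fulton that only outside-corner conditions are essential in defining $X_\lambda$.

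First I would handle the ``if'' direction by computing the Schubert partition of a pencil. Given a pencil $\mathfrak{P}_{W,r}$ with $d = \dim W$, the transitivity of $\SL_{m+n}(\R)$ on $d$-dimensional subspaces shows $\mathfrak{P}_{W,r}$ is a translate of $\mathfrak{P}_{F_d,r}$, and the single defining condition $\dim(V\cap F_d)\geq r$ matches the Schubert condition $\dim(V\cap F_{n+i-\mu_i})\geq i$ at $i=r$ precisely when $\mu_r = n+r-d$. Hence $\mathfrak{P}_{F_d,r} = X_\mu$ with $\mu=(n+r-d,\ldots,n+r-d,0,\ldots,0)$ (with $r$ copies of $n+r-d$ followed by $m-r$ zeros), whose Young diagram is an $r\times(n+r-d)$ rectangle with a unique outside corner at $(r,n+r-d)$. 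So rectangular partitions yield pencils.

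For the converse, suppose $X_\lambda = \mathfrak{P}_{W,r}$ for some $W$ and $r$. Since $X_\lambda$ is $B$-invariant (where $B$ is the Borel preserving $F_\bullet$), so is $\mathfrak{P}_{W,r}$; but the only $B$-stable subspaces of $\R^{m+n}$ are the standard flag members $F_0,F_1,\ldots,F_{m+n}$, which forces $W=F_d$ for some $d$. The computation above then gives $X_\lambda = X_\mu$ with $\mu$ rectangular, and since distinct partitions in $\Pi_{m,n}$ index distinct Schubert varieties, $\lambda=\mu$ is rectangular.

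Finally I would read off the corner structure: a box $(i,\lambda_i)$ is an outside corner of $D_\lambda$ exactly when $\lambda_i > \lambda_{i+1}$ (with the convention $\lambda_{m+1}=0$), so the number of outside corners equals the number of distinct nonzero parts of $\lambda$. A unique outside corner is therefore equivalent to $\lambda$ being rectangular, which by the previous step is equivalent to $X_\lambda$ being a pencil. The one place that needs mild care is justifying that an $X_\lambda$ written as a pencil must come from a standard flag subspace $W=F_d$; this is where $B$-invariance and the classification of $B$-stable subspaces do the only non-combinatorial work, and I expect this to be the main (though quite minor) obstacle. Everything else is bookkeeping with the partition--Schubert dictionary.
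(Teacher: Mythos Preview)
Your proposal is correct and follows the same route as the paper: identifying pencils with rectangular partitions via the Schubert dictionary and the fact (Fulton, Exercise~9.4.18) that only outside-corner conditions are needed to cut out $X_\lambda$. The paper's own argument is considerably terser---it is just the sentence preceding the lemma---and in particular does not spell out the converse; your use of $B$-invariance to force $W=F_d$ when $X_\lambda=\mathfrak{P}_{W,r}$ fills in exactly the step the paper leaves implicit, so your treatment is in fact more complete than the original.
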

	
	The Schubert variety given by \Cref{ex:Young_diagram} is not a pencil, as the Young diagram has three outside corners. However, every Schubert variety can be written as an intersection of pencils.\\
	
	One can also recognize constraining and weakly constraining pencils with the help of Young diagrams.\par 
	
	For an $m\times n$ rectangle, we draw the diagonal connecting the northeast and the southwest of the rectangle. A \emph{node} is a vertex of a box. We call a node \emph{unstable} if it is lying below the diagonal, and \emph{weakly unstable} if it is lying on or below the diagonal. See \Cref{fig:nodes} for an example.\par 
	
	Now we can reformulate the definition of constraining and weakly constraining pencils.
	
	\begin{lemma}\label{lem:pencil_criterion}
		A pencil $X_{\lambda}$ is constraining (resp. weakly constraining) if and only if the bottom-right vertex of the outside corner of $D_{\lambda}$ is an unstable (resp. weakly unstable) node.
	\end{lemma}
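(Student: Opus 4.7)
The proof is essentially a direct bookkeeping computation, so the plan is first to set up coordinates, then to translate the pencil data into the corner coordinates, and finally to check that the two inequalities match.

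First, I would pin down the single outside corner. Since $X_\lambda$ is a pencil, there is exactly one outside corner of $D_\lambda$; by the shape of a Young diagram this forces $\lambda_1 = \cdots = \lambda_k = c$ for some $1 \leq k \leq m$ and $1 \leq c \leq n$, with $\lambda_{k+1} = \cdots = \lambda_m = 0$ (interpreting the latter as vacuous when $k=m$). The outside corner is then the box in row $k$ and column $c$. Using the defining conditions $\dim(V \cap F_{n+i-\lambda_i}) \geq i$ for $X_\lambda$, only the condition at $i=k$ is non-redundant, so
\[
X_\lambda \;=\; \mathfrak{P}_{F_{n+k-c},\,k},
\]
that is, the pencil $\mathfrak{P}_{W,r}$ with $\dim W = n+k-c$ and $r = k$.

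Next I would place coordinates so that the $m \times n$ rectangle is $[0,n] \times [0,m]$ (horizontal axis of length $n$, vertical axis of length $m$, with the positive vertical direction pointing down). Then the northeast--southwest diagonal has equation $mx + ny = mn$, and a node $(x,y)$ lies strictly below the diagonal precisely when $mx + ny > mn$, and on or below precisely when $mx + ny \geq mn$. The bottom-right vertex of the outside-corner box sits at the lattice point $(c,k)$. Hence the condition ``unstable node'' is $mc + nk > mn$, and ``weakly unstable node'' is $mc + nk \geq mn$.

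Finally, I would translate the (weakly) constraining condition directly. With $\dim W = n+k-c$ and $r = k$, the constraining inequality $\dim W / r < (m+n)/m$ becomes
\[
m(n+k-c) \;<\; k(m+n),
\]
which simplifies to $mn < mc + nk$, i.e.\ exactly the ``unstable node'' inequality. The weakly constraining case is identical after replacing $<$ with $\leq$. This gives the equivalence in both directions. The only potential pitfall here is keeping the convention consistent between the Schubert-indexing $(i,\lambda_i)$ (rows counted from the top) and the Cartesian node coordinates on the rectangle, but once the corner box is identified at $(k,c)$ with vertex $(c,k)$, the remaining computation is algebraic and short.
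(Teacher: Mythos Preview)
Your proof is correct; the paper itself does not supply a proof for this lemma, treating it as an immediate reformulation of the definitions, and your direct computation is exactly the intended unpacking. The identifications $X_\lambda=\mathfrak{P}_{F_{n+k-c},\,k}$ and of the corner vertex at $(c,k)$ are right, and the inequality $mc+nk>mn$ matches the constraining condition $m(n+k-c)<k(m+n)$ on the nose.
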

	
	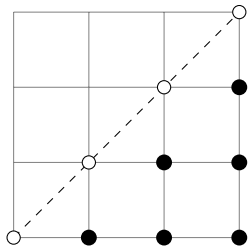
\begin{figure}
		\centering
		\[
		\begin{tikzpicture}
		\draw [help lines] (0,0) grid (3,3);
		\draw [dashed] (0,0) -- (3,3);
		\node[circle,draw=black, fill=white, inner sep=0pt,minimum size=5pt] (b) at (0,0) {};
		\node[circle,draw=black, fill=white, inner sep=0pt,minimum size=5pt] (b) at (1,1) {};
		\node[circle,draw=black, fill=white, inner sep=0pt,minimum size=5pt] (b) at (2,2) {};
		\node[circle,draw=black, fill=white, inner sep=0pt,minimum size=5pt] (b) at (3,3) {};
		\fill [black] (1,0) circle (3pt);
		\fill [black] (2,0) circle (3pt);
		\fill [black] (3,0) circle (3pt);
		\fill [black] (2,1) circle (3pt);
		\fill [black] (3,1) circle (3pt);
		\fill [black] (3,2) circle (3pt);
		\end{tikzpicture}
		\]
		\caption{Unstable and weakly unstable nodes in a $3\times3$ rectangle. The black nodes are unstable, while the white nodes are weakly unstable but not unstable.} \label{fig:nodes}
	\end{figure}
	
	\begin{example}
		Let $m=2$ and $n=3$. By \Cref{lem:pencil_criterion} there are $5$ constraining pencils: $X_{12},X_{15},X_{23},X_{25}$ and $X_{34}$. Among those $X_{25}$ and $X_{34}$ are the maximal ones, and they give the obstruction to non-divergence.
		\[
		\ytableausetup{nosmalltableaux}
		\ytableausetup{centertableaux}
		\ytableaushort
		{\none\bullet, \none}
		*{3,3}
		*[*(yellow)]{2,0}
		\quad\quad
		\ytableausetup{nosmalltableaux}
		\ytableausetup{centertableaux}
		\ytableaushort
		{\none, \bullet}
		*{3,3}
		*[*(yellow)]{1,1}
		\]
	\end{example}
	As noted in \Cref{rem:pencils}, the weakly constraining pencils coincide with the constraining pencils in the case that $m$ and $n$ are coprime. This also follows from the simple observation that there are no nodes lying on the diagonal of $D_{\lambda}$.

	\bibliographystyle{alpha}
	\bibliography{references}
	
\end{document}